\newcommand{\RR}{\mathbb{R}}
\newcommand{\Per}{\mathrm{Per}}
\newcommand{\ud}{\mathrm{d}}
\newcommand{\pl}[1]{\foreignlanguage{polish}{#1}}
\newtheorem{theorem}{Theorem}[section]
\newtheorem{proposition}[theorem]{Proposition}
\newtheorem{lemma}[theorem]{Lemma}
\newtheorem{corollary}[theorem]{Corollary}
\theoremstyle{definition}
\newtheorem{example}[theorem]{Example}
\newtheorem{remark}[theorem]{Remark}
\title[Asymptotics of non-local perimeters]{Asymptotics of non-local perimeters}
\author{Wojciech Cygan}
\thanks{Research supported by National Science Centre (Poland), grant no.\ 2019/33/B/ST1/02494}
\address{(Wojciech Cygan) 
		University of Wroc{\lll}aw,
		Faculty of Mathematics and Computer Science\\
		Institute of Mathematics,
		pl.\ Grunwaldzki 2/4, 50--384 Wroc{\lll}aw, Poland
\&		
		Technische Universit\"{a}t Dresden,
		Faculty of Mathematics\\
		Institute of Mathematical Stochastics,
		Zellescher Weg 25, 01069 Dresden, Germany}
\email{wojciech.cygan@uwr.edu.pl}
\author{Tomasz Grzywny}
\address{(Tomasz Grzywny)
Wroc{\lll}aw University of Science and Technology,
Faculty of Pure and Applied Mathematics\\
	Wyb. \pl{Wyspia\'{n}skiego} 27,
	50-370 \pl{Wroc\l{}aw}, Poland}
\email{tomasz.grzywny@pwr.edu.pl}
\subjclass[2010]{
				60G51, %Processes with independent increments; L´evy processes
				60G52 % Stable processes,
 				60J75, % Jump processes
  				52A38  %Length, area, volume and convex sets (aspects of convex geometry
  				46E35, %Sobolev spaces and other spaces of "smooth'' functions, embedding theorems, trace theorems
  				}
\keywords{anisotropic perimeter, bounded variation, fractional perimeter, fractional Sobolev norm, non-local perimeter}
\numberwithin{equation}{section}
\begin{document}
\selectlanguage{english}

\begin{abstract}
We introduce a notion of non-local perimeter which is defined through an arbitrary positive Borel measure on $\RR^d$ which integrates the function $1\wedge |x|$. Such definition of non-local perimeter encompasses a wide range of perimeters which have been already studied in the literature, including fractional perimeters and anisotropic fractional perimeters. The main part of the article is devoted to the study of the asymptotic behaviour of non-local perimeters. As direct applications we recover well-known convergence results for  fractional perimeters and anisotropic fractional perimeters
\end{abstract}

\maketitle

\section{Introduction}

The concept of non-local perimeter of a given Borel set $E\subset \RR^d$ of finite Lebesgue measure which corresponds to the fractional Laplacian was proposed in \cite{Caffarelli_Savin}. This is the so-called $\alpha $-perimeter and it is defined via the following integral formula
\begin{equation}\label{Per_alpha}
\Per_\alpha (E) = \int_E\int_{E^c}\frac{1}{|x-y|^{d+\alpha}}\, \ud x\, \ud y,
\end{equation}
where $0<\alpha <1$ and $E^c$ is the complement of $E$. By $|x|$ we denote the Euclidean norm of $x\in \RR^d$. This object is strongly related to the fractional Sobolev norm and it has been intensively studied over last years, see \cite{Ambrosio-Per}, \cite{Bourgain_Brezis_Mironescu}, \cite{Caffarelli_Savin}, \cite{Caffarelli_Valdinocci},
 \cite{Figalli}, \cite{Frank}, \cite{Fusco-Per}, \cite{Ludwig_Per}, \cite{Visintin}, and \cite{Kreuml}, \cite{Capolli}, \cite{Carbotti}. We also refer to \cite{Schilling_1} and \cite{Schilling_2} for the case of fractional norms related to Feller generators.

The main motivation for the present article was an another interesting variant of non-local perimeter which is defined through  a given non-singular kernel. More precisely, if $J\colon \RR^d \to [0,\infty )$ is a radially symmetric and integrable function then the corresponding $J$-perimeter of a set $E$ is given by
\begin{equation}\label{Per_J}
\Per_J (E) = \int_E\int_{E^c} J(x-y)\, \ud y \, \ud x.
\end{equation}
In \cite{Rossi_paper_1} the authors established basic properties and convergence results for $J$-perimeters, see also \cite{Rossi_book}, \cite{Cesaroni},  \cite{Pagliari_paper}, \cite{Pagliari_2} and \cite{Pagliari_phD}.
For a treatment on a unified framework for non-local perimeters and curvatures we refer to \cite{Chambolle_etal}. 

Our principal goal is to introduce a notion of non-local  perimeter which is defined with the aid of a given positive Borel measure on $\mathbb{R}^d$. 
If we look at \eqref{Per_alpha} and \eqref{Per_J} from a probabilistic perspective then it is evident that the $\alpha$-perimeter is linked with an $\alpha$-stable L\'{e}vy process while $J$-perimeter is associated with a compound Poisson process. We thus aim to develop a unified approach that encompasses the both concepts as special cases. We emphasize that our methods are, however, purely analytical.

Before we define the object of our study we recall the definition of the classical perimeter.
The perimeter of a Borel set $E\subset \RR^d$ can be defined in the variational language as the total mass of the total variation measure of the indicator function $\mathbf{1}_E$.
More precisely,  the distributional gradient $Du$ of a function $u\in L^1(\RR^d) $ is a vector-valued Radon measure and its total variation is a positive measure defined as
\begin{align*}
\vert Du \vert = \sup \left\{ \int_{\RR^d}u(x)\, \mathrm{div}\, v(x)\, \ud x:\, v\in C_c^\infty (\RR^d, \RR^d),\, \Vert v\Vert_\infty \leq 1  \right\}.
\end{align*}
The perimeter of $E$ is then given by
\begin{align}\label{Perimeter_def}
\Per(E) = |D\mathbf{1}_E |(\RR^d).
\end{align}
It is known that for a set $E$ of finite Lebesgue measure, $\Per(E)$ is finite if, and only if, $\mathbf{1}_E\in \mathrm{BV}(\RR^d)$, where the 
space of functions of bounded variations is defined as 
\begin{align*}
\mathrm{BV} (\RR^d) = \{ u\in L^1(\RR^d):\, |Du|(\RR^d)<\infty \}.
\end{align*}
The space $\mathrm{BV}$ is endowed with the norm
$\Vert u \Vert_{\mathrm{BV}} = \Vert u\Vert_{L^1}+ \vert Du\vert $ and it holds $W^{1,1}(\RR^d) \subset \mathrm{BV}(\RR^d)$. Our main reference for functions of bounded variation is \cite{Ambrosio_2000}.

For any positive Borel measure $\nu$ on $\RR^d$ satisfying 
\begin{align}\label{ass_1}
\int (1\wedge |x|)\, \nu (\ud x)< \infty \quad \mathrm{and}\quad \nu (\{0\})=0
\end{align}
we consider 
the corresponding non-local $\nu$-perimeter of a Borel set $ E\subset \RR^d$ defined as
\begin{align*}
\Per_{\nu} (E) = \int_{E} \int_{E^c-x}\nu (\ud y)\, \ud x . 
\end{align*}
It was recently observed in \cite{Heat_Content_1} that 
such perimeters appear as limit objects in the asymptotics of the heat content related to L\'{e}vy processes of bounded variation. It was proved in \cite[Lemma 1]{Heat_Content_1} that 
for a set $E$ of finite Lebesgue measure and of finite perimeter, $\Per_\nu (E)$ is finite as well.
To the non-local $\nu$-perimeter we attach the space
\begin{align}
\mathrm{BV}_{\nu} (\RR^d) = \{ u\in L^1(\RR^d):\, \int_{\RR^d}\int_{\RR^d} |u(x+y)-u(y)|\, \nu (\ud x)\, \ud y <\infty \}.
\end{align}
It is equipped with the norm 
$\Vert u\Vert_{\mathrm{BV}_{\nu}} = \Vert u\Vert_{L^1} + \mathcal{F}_{\nu}(u)$, 
where 
\begin{align*}
\mathcal{F}_{\nu} (u) = \frac{1}{2}\int_{\RR^d}\int_{\RR^d} |u(x+y)-u(y)|\, \nu (\ud x)\, \ud y .
\end{align*}
In Section \ref{sec:Basic} we show that $\mathrm{BV}_\nu (\RR^d)$ is a Banach space and $\mathrm{BV}(\RR^d)\subset \mathrm{BV}_{\nu} (\RR^d)$. For sets of finite Lebesgue measure and of finite perimeter their $\nu$-perimeter can be computed through the formula $\Per_\nu (E)=\mathcal{F}_\nu (\mathbf{1}_E)$. Furthermore, we observe that for the $\nu$-perimeter a version of isoperimetric inequality holds in the case when the measure $\nu$ is given by a radially increasing kernel. We also find a co-area formula for the $\nu$-perimeter.

There has been a vivid interest in asymptotic behaviour and convergence results for fractional perimeters in recent years. 
The asymptotics as $\alpha \uparrow 1$ were found in \cite{Ponce} (see also \cite{Bourgain_Brezis_Mironescu} and \cite{Davila}; and  recent paper \cite{Cesaroni_second} for second order asymptotics). In this case the following convergence holds for any set $E$ of finite Lebesgue measure and of finite perimeter,
\begin{align}\label{Per_alpha_conv_1}
\lim_{\alpha \uparrow 1}\, (1-\alpha)\Per_\alpha (E) = \frac{K_{1,d}}{2}\, \Per (E),
\end{align}
where 
\begin{align}\label{K_1d}
K_{1,d}= \int_{\mathbb{S}^{d-1}}|e\cdot \theta|\sigma (\ud \theta),\quad |e|=1.
\end{align}
Here, $\sigma$
stands for the usual surface measure on the unit sphere. 
One can show (see e.g.\ \cite{Lombardini}) that $K_{1,d}= 2\varpi_{d-1}$, where  
$
\varpi_d = \pi^{d/2}/\Gamma \left(\frac{d}{2}+1\right)
$
is the Lebesgue measure of the unit ball in $\RR^d$. 
On the other hand, if $\alpha \downarrow 0$, the asymptotic result was found in \cite{Mazya} (see also \cite{Figalli} for a more detailed treatment) and it asserts that for any bounded set $E$ of finite perimeter, 
\begin{align}\label{Per_alpha_conv_0}
\lim_{\alpha \downarrow 0}\alpha \Per_\alpha (E) = \kappa_{d-1}|E|,
\end{align}
where $|E|$ stands for the Lebesgue measure of $E$ and $\kappa_{d-1}=\sigma (\mathbb{S}^{d-1}) =d\varpi_d$. For $J$-perimeters it was found in \cite{Rossi_paper_1} that under the assumption that $J$ has compact support and for bounded sets $E$ of finite perimeter the following convergence holds
\begin{equation}\label{Per_J_conv}
\lim_{\varepsilon\downarrow 0}\varepsilon^{-1}\Per_{J_\varepsilon}(E) = C_J^{-1}\Per (E),
\end{equation}
where $J_\varepsilon (x) = \varepsilon^{-d}J(x/\varepsilon)$ and $C_J = 2( \int_{\RR^d}J(x)|x_d|\, \ud x)^{-1}$ (here $x=(x_1,\ldots ,x_d)$).

In order to obtain asymptotics of non-local $\nu$-perimeters we first extend \cite[Theorem 2]{Ponce} to the current setting (see Theorem \ref{Thm:Ponce}) and with this result at hand we show that,  for any family of measures $\{\nu_\varepsilon\}_{\varepsilon >0}$ satisfying \eqref{ass_1} and 
such that the mass of normalized measures $(1\wedge |x|)\nu_\varepsilon (\ud x)$ concentrates at zero, there is a sequence $\varepsilon_j$ converging to zero such that
\begin{align}\label{intro_2}
\lim_{j\to \infty} C_{\varepsilon_j}^{-1}\Per_{\nu_{\varepsilon_j}}(E) = 
\frac{1}{2}
\int_{\mathbb{S}^{d-1}}\int_{\RR^d} |D \mathbf{1}_E \cdot \theta|\, \mu(\mathrm{d}\theta).
\end{align}
Here, $C_{\varepsilon_j}$ are normalizing constants and $\mu$ is a probability measure on the unit sphere which is constructed through the family $\{\nu_\varepsilon\}_{\varepsilon >0}$, see Section \ref{sec:Asymptotics} for details. Clearly, if the limit measure $\mu$ happens to be the (normalized) uniform measure on the unit sphere then the right-hand side of \eqref{intro_2} is equal to the right-hand side of \eqref{Per_alpha_conv_1} divided by $\kappa_{d-1}$.
It turns out that this approach applies to fractional perimeters and $J$-perimeters and we not only recover results in \eqref{Per_alpha_conv_1} and \eqref{Per_J_conv} but we also abandon the assumption that $J$ is compactly supported.
 
A fruitful observation in this context is the fact that the non-local perimeter of $E$ can be represented  with the aid of the so-called covariance function related to the set $E$, see \eqref{g_omega_defn}. This enables us to
investigate the case when the mass of the normalized measures $(1\wedge |x|)\nu_\varepsilon (\ud x)$ concentrates at infinity and as an application we recover \eqref{Per_alpha_conv_0}. We also find a corresponding result for $J$-perimeters. We show that under the assumption that the function $\ell (s)= \int_{|x|>s}J(x)\, \ud x$ is slowly varying at zero, for any set $E$ of finite Lebesgue measure and of finite perimeter it holds 
\begin{align*}
\lim_{\varepsilon \downarrow 0}\ell(\varepsilon)^{-1}\Per_{\widetilde{J}_\varepsilon }(E) = |E|,
\end{align*}
where $\widetilde{J}_\varepsilon (x) = \varepsilon^dJ(\varepsilon x)$. Recall that $\ell$ is slowly varying at zero if $\lim_{s\to 0}\ell (\lambda s)/\ell (s)=1$, for $\lambda >0$, see \cite{Bingham}.
%weaken the necessary assumptions for convergence of $\alpha$-perimeters in the case when $\alpha \downarrow 0$, and for $J$-perimeters.

The notion of non-local $\nu$-perimeter can be also successfully  exploited in the framework of anisotropic perimeters.
Anisotropic perimeter related to a given convex body is a natural generalization of the classical perimeter and it is defined via a norm whose unit ball is equal to a given convex body, see \cite{Figalli_Inventiones} and references therein.
Let $K\subset \RR^d$ be a convex compact set of non-empty interior (so-called convex body) and such that it is origin-symmetric. Let $\Vert \cdot \Vert_K$ denote the unique norm on $\RR^d$ with unit ball equal to $K$, that is
\begin{align*}
\Vert x\Vert_K = \inf \{ \lambda >0: \lambda^{-1}x\in K\},\quad x\in \RR^d.
\end{align*}
Let $K^*=\{y\in \RR^d:\sup_{x\in K} y\cdot x\leq 1\}$ be the polar body of $K$. Anisotropic perimeter of a Borel set $E\subset \RR^d$ with respect to $K$ is defined as
\begin{align*}
\Per (E,K) = \int_{\partial^* E}\Vert \mathsf{n}_{E}(x)\Vert_{K^*}\, \ud x.
\end{align*}
Here $\mathsf{n}_E(x)$ denotes the measure theoretic outer unit normal vector of $E$ at $x\in \partial^* E$, where $\partial^*E$ is the reduced boundary of $E$, see \cite[Section 3.5]{Ambrosio_2000}.
Similarly as the classical perimeter is linked with Sobolev norm, the anisotropic perimeter is related to anisotropic Sobolev (semi)norms which have been intensively studied, see \cite{Alvino}, \cite{Villani}, \cite{Figalli_Adv}, \cite[Appendix by M.\ Gromov]{Gromov}, \cite{Ludwig_Norm} and \cite{Ma}. 

There also exists a fractional counterpart of the anisotropic perimeter. For $0<\alpha <1$, the anisotropic $\alpha$-perimeter of $E$ with respect to $K$ is given by
\begin{align*}
\Per_{\alpha} (E,K) = \int_E \int_{E^c}\frac{1}{\Vert x-y\Vert ^{d+\alpha }_K}\,\ud x\, \ud y .
\end{align*}
In \cite{Ludwig_Per} it was proved that for any bounded set of finite perimeter the following results hold
\begin{align}\label{anisotropic_alpha_per_coverg}
\lim_{\alpha \uparrow 1}(1-\alpha) \Per_\alpha (E,K) = \Per (E,ZK)
\end{align}
and 
\begin{align}\label{Ludwig at zero}
\lim_{\alpha \downarrow 0}\alpha \Per_{\alpha}(E,K) = d|K||E|,
\end{align}
where $ZK$ is the so-called moment body of $K$, see \eqref{moment_body} for the definition.
%It is defined as the unique convex body satisfying 
%\begin{align*}
%\Vert y \Vert_{Z^*K} = \frac{d+1}{2}\int_K |y\cdot x|\, \ud x,\quad  y\ \in \RR^d,
%\end{align*}
%where $Z^*K$ is the polar body of $ZK$. 
Through the methods of the present paper we are able to recover convergence in \eqref{anisotropic_alpha_per_coverg} and \eqref{Ludwig at zero} and show that they are actually valid for all sets of finite measure and of finite perimeter.  Similarly, we establish the corresponding convergence of anisotropic Sobolev norms given in \cite[Theorem 8]{Ludwig_Per} and show that the assumption of compact support is superfluous.

\section{Basic properties of the non-local perimeter}\label{sec:Basic}
In this section we establish a few essential properties of the non-local perimeter such as isoperimetric inequality and co-area formula. We start with the following symmetry property. 
\begin{lemma}\label{Lemma1}
For any $E\subset \RR^d$ of finite Lebesgue measure we have
\begin{align*}
\Per_{\nu} (E) = \Per_{\nu} (E^c).
\end{align*}
\end{lemma}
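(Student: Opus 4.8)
The plan is to show that the two double integrals defining $\Per_\nu(E)$ and $\Per_\nu(E^c)$ are equal by manipulating the inner and outer integration variables and exploiting that Lebesgue measure is translation invariant, together with a change of sign $y\mapsto -y$ which we will compensate for. First I would write out
\begin{align*}
\Per_\nu(E) = \int_E \int_{E^c - x} \nu(\ud y)\, \ud x = \int_{\RR^d}\int_{\RR^d} \IndFun{E}(x)\,\IndFun{E^c}(x+y)\,\nu(\ud y)\,\ud x,
\end{align*}
and similarly $\Per_\nu(E^c) = \int_{\RR^d}\int_{\RR^d}\IndFun{E^c}(x)\,\IndFun{E}(x+y)\,\nu(\ud y)\,\ud x$. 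These manipulations are justified by Tonelli's theorem since the integrands are nonnegative and measurable.

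The key step is to handle the mismatch between $\IndFun{E}(x)\IndFun{E^c}(x+y)$ and $\IndFun{E^c}(x)\IndFun{E}(x+y)$. In the integral for $\Per_\nu(E^c)$, for fixed $y$ I would substitute $x = z + y$ (a translation, so $\ud x = \ud z$ on $\RR^d$), obtaining $\int_{\RR^d}\IndFun{E^c}(z+y)\IndFun{E}(z)\,\ud z$, which is exactly the inner integral appearing in $\Per_\nu(E)$ for the same $y$. Since this holds for every $y$ and we then integrate against $\nu(\ud y)$, the two iterated integrals coincide. Note that no symmetry of $\nu$ is needed — the translation invariance of Lebesgue measure does all the work, because the variable being translated is the one carrying Lebesgue measure, not $\nu$. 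One should also record that finiteness of $|E|$ (hence of $|E^c|$ only in the sense that $E^c$ may have infinite measure, but the integrals are still well-defined in $[0,\infty]$) poses no problem since Tonelli allows the value $+\infty$ throughout and the equality is an identity in $[0,\infty]$.

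The main obstacle, such as it is, is purely bookkeeping: making sure the roles of $x$ and $y$ and the domains of integration are tracked correctly when passing between the ``$E^c - x$'' notation and the indicator-function notation, and confirming measurability of $(x,y)\mapsto \IndFun{E}(x)\IndFun{E^c}(x+y)$ so that Tonelli applies (this follows since $(x,y)\mapsto x+y$ is continuous and $E$, $E^c$ are Borel). There is no genuine analytic difficulty here; the statement is essentially a change-of-variables identity dressed up in perimeter notation.
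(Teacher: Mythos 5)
There is a genuine gap in your proof, and it is precisely the step you glossed over. Starting from
\begin{align*}
\Per_\nu(E^c) = \int_{\RR^d}\int_{\RR^d}\IndFun{E^c}(x)\,\IndFun{E}(x+y)\,\ud x\,\nu(\ud y),
\end{align*}
the translation you describe does not produce what you claim. If you set $x = z+y$, the inner integral becomes $\int\IndFun{E^c}(z+y)\IndFun{E}(z+2y)\,\ud z$, not $\int\IndFun{E^c}(z+y)\IndFun{E}(z)\,\ud z$. The substitution that actually isolates $\IndFun{E}$ at the bare variable is $x = z-y$, which gives $\int\IndFun{E^c}(z-y)\IndFun{E}(z)\,\ud z = |E\cap(E^c+y)|$. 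But the inner integral in $\Per_\nu(E)$ is $\int\IndFun{E}(x)\IndFun{E^c}(x+y)\,\ud x = |E\cap(E^c-y)|$. These two expressions differ by the sign of $y$, and since the outer integration is against $\nu(\ud y)$ — which you correctly refuse to assume symmetric — translation invariance of Lebesgue measure alone does \emph{not} finish the proof. Your opening line promises a ``change of sign $y\mapsto -y$ which we will compensate for,'' but the compensation never appears.

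The missing identity is $|E\cap(E^c+y)| = |E\cap(E^c-y)|$ for each fixed $y$, which is what the paper supplies: since $|E\cap(E-y)| = |E\cap(E+y)|$ (a genuine translation-invariance fact), one writes $|E\cap(E^c+y)| = |E| - |E\cap(E+y)| = |E| - |E\cap(E-y)| = |E\cap(E^c-y)|$. This chain requires $|E|<\infty$ in an essential way — you cannot subtract an infinite quantity — and the paper's remark immediately after the lemma shows the statement fails for $E=(0,\infty)$. So your dismissal of the finiteness hypothesis as mere Tonelli bookkeeping is also off the mark: it is not there to make the integrals well-defined, it is there to make the lemma true. In short, your reduction via Tonelli and the shift of the Lebesgue variable is correct scaffolding, but the actual content of the lemma — the $y\mapsto -y$ invariance of $|E\cap(E^c\pm y)|$ via $|E|<\infty$ — is asserted rather than proved.
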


\begin{proof}
Indeed, by Fubini's theorem we obtain
\begin{align*}
\Per_{\nu} (E)& = \int_{E} \int_{E^c-x}\nu (\ud y)\, \ud x
= \int \int \mathbf{1}_{E}(x) \mathbf{1}_{E^c-y}(x)\, \ud x\, \nu (\ud y)\\
 &=\int |E\cap (E-y)^c|\nu(dy). 
\end{align*}
Since $|E\cap (E-y)|=|E\cap (E+y)|$, we have
$$|E\cap (E-y)^c| = |E|- |E\cap(E-y)|=|E\cap(E+y)^c|=|E^c\cap(E-y)|.$$
Hence, $\Per_{\nu} (E) =\Per_\nu (E^c)$, as desired.
%\begin{align*}
%\Per_{\nu} (E) &= 
%% \int \int  \mathbf{1}_{E^c}(z)(1- \mathbf{1}_{E^c}(z-y))  \, \ud z\, \nu (\ud y)\\
%%&= \int (g_{E^c}(0) - g_{E^c}(y)) = 
%\Per_\nu (E^c),
%\end{align*}
%as we claimed.
\end{proof}

\begin{remark}
(1) Lemma \ref{Lemma1}  evidently does not hold if $|E|=\infty$. Indeed, the equality $|E\cap (E-y)^c|  = |E\cap(E+y)^c|$ fails already  for $(0,\infty)$.\\
(2) Without loss of generality we could assume that the measure $\nu$ appearing in the definition of the $\nu$-perimeter is symmetric in the sense that $\nu (A) = \nu (-A)$, for any Borel set $A$. This follows from the fact that $\Per_\nu (A) = \Per_{\widetilde{\nu}}(A)$, where $\widetilde{\nu}(A) = \frac{1}{2}(\nu (A)+\nu (-A))$.
\end{remark}

The following result is an isoperimetric inequality for the non-local perimeter in the case when the measure $\nu$ is given by a radial kernel $j$. We emphasize that the function $j$ does not have to be integrable. Recall that for a set $A\subset \RR^d$ we denote  by $|A|$ its Lebesgue measure.  We omit the proof as it is the same as that of \cite[Proposition 3.1]{Cesaroni}, see also \cite[Theorem 2.4]{Rossi_paper_1}. 
\begin{proposition}[Isoperimetric inequality]
Let $\nu (\ud x) = j(x)\ud x$ where $j\geq 0$ is a radially non-increasing function. Then for any $E\subset\RR^d$ of finite Lebesgue measure
\begin{align*}
\Per_\nu (E)\geq \Per_\nu (B),
\end{align*}
where $B$ is an open ball centred at the origin such that $|B|= |E|$.
\end{proposition}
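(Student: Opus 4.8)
The plan is to reduce the statement to the classical Riesz rearrangement inequality, using the covariance representation of the $\nu$-perimeter already exploited in the proof of Lemma~\ref{Lemma1}. Writing $\nu(\ud y)=j(y)\,\ud y$, that computation gives
\begin{align*}
\Per_\nu(E)=\int_{\RR^d}\bigl(|E|-|E\cap(E-y)|\bigr)\,j(y)\,\ud y,
\end{align*}
and the same identity holds with $E$ replaced by $B$. The integrand $|A|-|A\cap(A-y)|=|A\setminus(A-y)|$ is nonnegative, and $|E|=|B|$, so $\Per_\nu(E)\ge\Per_\nu(B)$ will follow once we show that $y\mapsto|E|-|E\cap(E-y)|$ dominates $y\mapsto|B|-|B\cap(B-y)|$ after integration against $j$. (All integrals below are read in $[0,\infty]$, so there is nothing to prove when $\Per_\nu(E)=\infty$.)

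The first step is the localized inequality: for every $t>0$,
\begin{align*}
\int_{\{|y|<t\}}|E\cap(E-y)|\,\ud y\ \le\ \int_{\{|y|<t\}}|B\cap(B-y)|\,\ud y.
\end{align*}
After a change of variables the left-hand side becomes the three-function integral $\int_{\RR^d}\int_{\RR^d}\mathbf 1_E(x)\,\mathbf 1_E(z)\,\mathbf 1_{\{|x-z|<t\}}\,\ud x\,\ud z$, to which the Riesz rearrangement inequality applies directly: the symmetric decreasing rearrangement of $\mathbf 1_E$ is $\mathbf 1_B$ since $|E|=|B|$, while the ball indicator $\mathbf 1_{\{|\cdot|<t\}}$ equals its own rearrangement. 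Subtracting this from the elementary identity $\int_{\{|y|<t\}}|E|\,\ud y=|E|\,\varpi_d t^{d}=|B|\,\varpi_d t^{d}=\int_{\{|y|<t\}}|B|\,\ud y$ yields, for every $t>0$,
\begin{align*}
\int_{\{|y|<t\}}\bigl(|B|-|B\cap(B-y)|\bigr)\,\ud y\ \le\ \int_{\{|y|<t\}}\bigl(|E|-|E\cap(E-y)|\bigr)\,\ud y.
\end{align*}

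The second step passes from balls to the radial non-increasing kernel $j$. Writing $j(y)=\phi(|y|)$ with $\phi$ non-increasing, the layer-cake formula gives $j(y)=\int_0^\infty\mathbf 1_{\{|y|<R(s)\}}\,\ud s$ for almost every $y$, where $R(s)=\sup\{r\ge 0:\phi(r)>s\}$ is finite for every $s>0$ thanks to~\eqref{ass_1}. Substituting this into the integral for $\Per_\nu(E)$, applying Tonelli's theorem, and then invoking the localized inequality above with $t=R(s)$ for each fixed $s$, one arrives at $\Per_\nu(E)\ge\Per_\nu(B)$. I expect the only genuinely delicate point to be the non-integrability of $j$: when $\int_{\RR^d}j=\infty$ the quantities $|E|\int_{\RR^d}j$ and $\int_{\RR^d}|E\cap(E-y)|\,j(y)\,\ud y$ are each infinite, so one cannot simply subtract them as in the integrable case; keeping the nonnegative combination $|E|-|E\cap(E-y)|$ intact throughout---which is exactly what the layer-cake decomposition accomplishes---is what circumvents this obstruction. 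Everything else is routine.
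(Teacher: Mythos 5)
Your argument is correct and is essentially the standard proof behind the references the paper points to (Cesaroni, Prop.~3.1; Rossi, Thm.~2.4) rather than reproducing in the text: one passes to the covariance representation $\Per_\nu(E)=\int(|E|-g_E(y))j(y)\,\ud y$, decomposes the radially non-increasing $j$ by the layer-cake formula into indicators of balls, applies the Riesz rearrangement inequality level by level, and integrates. Your observation that one must subtract the covariance from $|E|\varpi_d t^d$ \emph{before} integrating in $s$, so as to keep a nonnegative integrand and avoid $\infty-\infty$ when $j\notin L^1$, is exactly the device that makes the non-integrable case go through.
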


We further claim that under condition \eqref{ass_1} the space $\mathrm{BV}(\RR^d)$ is contained in $\mathrm{BV}_\nu(\RR^d)$. 
In particular, by Lemma \ref{Lemma1} we easily obtain that
$$\mathcal{F}_\nu (\mathbf{1}_E)=\mathrm{Per}_\nu (E),$$ 
for any $E$ of finite Lebesgue measure and of finite perimeter.
For any vector-valued measure $\Lambda$ we use notation $\int_{\RR^d} \Lambda = \Lambda (\RR^d)$.

\begin{proposition}
For any $u\in \mathrm{BV}(\RR^d)$ it holds
\begin{itemize}
\item[(i)] $\mathcal{F}_\nu (u)\leq C_\nu \Vert u\Vert_{\mathrm{BV}}$, where $C_\nu = \int (1\wedge |x|)\, \nu (\ud x)$.
\item[(ii)] $\mathcal{F}_\nu (u) \leq \tilde{C}_\nu \int_{\RR^d} |Du|$, where $\tilde{C}_\nu = \int |x|\nu (\ud x) \leq \infty$.
\end{itemize}
\end{proposition}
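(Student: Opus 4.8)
The plan is to deduce both bounds from a single classical fact about $L^1$-translations. For $x \in \RR^d$ write $\tau_x u := u(\cdot + x)$. The key estimate is that, for every $u \in \mathrm{BV}(\RR^d)$ and every $x \in \RR^d$,
\[
\norm{\tau_x u - u}_{L^1(\RR^d)} \le \min\bigl(\, |x|\, |Du|(\RR^d),\ 2\norm{u}_{L^1(\RR^d)}\,\bigr).
\]
The bound $\norm{\tau_x u - u}_{L^1} \le 2\norm{u}_{L^1}$ is immediate from the triangle inequality and translation invariance of Lebesgue measure. The bound $\norm{\tau_x u - u}_{L^1} \le |x|\,|Du|(\RR^d)$ is standard: for $u \in C_c^\infty(\RR^d)$ it follows by writing $u(y+x) - u(y) = \int_0^1 \nabla u(y+tx)\cdot x\,\ud t$ and applying Fubini's theorem, and for general $u \in \mathrm{BV}(\RR^d)$ one mollifies, $u_\delta = u * \rho_\delta$, uses $\norm{\nabla u_\delta}_{L^1} \le |Du|(\RR^d)$ and lets $u_\delta \to u$, $\tau_x u_\delta \to \tau_x u$ in $L^1(\RR^d)$; see \cite{Ambrosio_2000}.

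Granting this, the remaining arguments are just bookkeeping. Choosing a Borel representative of $u$, the map $(x,y)\mapsto |u(x+y)-u(y)|$ is nonnegative and Borel measurable, so Tonelli's theorem gives
\[
\mathcal{F}_\nu(u) = \frac12\int_{\RR^d}\norm{\tau_x u - u}_{L^1(\RR^d)}\,\nu(\ud x).
\]
For (ii) I bound the integrand by $|x|\,|Du|(\RR^d)$, obtaining $\mathcal{F}_\nu(u) \le \tfrac12\tilde C_\nu\, |Du|(\RR^d) \le \tilde C_\nu \int_{\RR^d}|Du|$ (there is nothing to prove if $\tilde C_\nu = \infty$). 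For (i) I split the $\nu$-integral into $\{|x|\le 1\}$ and $\{|x|>1\}$, use $\norm{\tau_x u - u}_{L^1} \le |x|\,|Du|(\RR^d) = (1\wedge|x|)\,|Du|(\RR^d)$ on the first piece and $\norm{\tau_x u - u}_{L^1} \le 2\norm{u}_{L^1} = 2(1\wedge|x|)\norm{u}_{L^1}$ on the second, and then bound $|Du|(\RR^d) \le \norm{u}_{\mathrm{BV}}$ and $\norm{u}_{L^1}\le \norm{u}_{\mathrm{BV}}$; adding the two contributions and recombining the two $\nu$-integrals into $\int(1\wedge|x|)\,\nu(\ud x) = C_\nu$ yields $\mathcal{F}_\nu(u)\le C_\nu\norm{u}_{\mathrm{BV}}$.

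There is no real obstacle here: the only non-formal input is the translation estimate above, which is exactly the reason the inclusion $\mathrm{BV}(\RR^d)\subset\mathrm{BV}_\nu(\RR^d)$ holds, and assumption \eqref{ass_1} enters only to make the constant $C_\nu$ (and, in (i), the two split integrals) finite. As a byproduct the same computation shows $\mathcal{F}_\nu$ is finite on all of $\mathrm{BV}(\RR^d)$, which combined with Lemma \ref{Lemma1} gives $\Per_\nu(E) = \mathcal{F}_\nu(\mathbf{1}_E)$ for every set $E$ of finite Lebesgue measure and finite perimeter.
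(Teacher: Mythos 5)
Your proof is correct and rests on the same ingredients as the paper's: the mollification-based translation estimate $\Vert u(\cdot+x)-u\Vert_{L^1}\le |x|\,|Du|(\RR^d)$, the trivial bound $\Vert u(\cdot+x)-u\Vert_{L^1}\le 2\Vert u\Vert_{L^1}$, and the split of the $\nu$-integral at $|x|=1$. The only difference is organizational: you pass to the $\delta\downarrow 0$ limit once, inside the $L^1$ translation norm, and then integrate against $\nu$, whereas the paper first establishes $\mathcal{F}_\nu(u_n)\to\mathcal{F}_\nu(u)$ by dominated convergence and only then proves (i) and (ii) for smooth approximants --- the underlying computation is identical.
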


\begin{proof}
It is known \cite[Thm.~3.9]{Ambrosio_2000} that for any $u\in \mathrm{BV}(\RR^d)$ there exists a sequence $u_n\in C^\infty \cap W^{1,1}(\RR^d)$ such that 
\begin{align*}
u_n\to u\  \mathrm{in}\ L^1\quad \mathrm{and}\quad \lim_{n\to \infty}\int_{\RR^d} |\nabla u_n(x)|\, \ud x = \int_{\RR^d} |Du|\,. 
\end{align*}
We prove that for sequence $u_n$ it holds 
\begin{align*}
\lim_{n\to \infty} \mathcal{F}_\nu (u_n) = \mathcal{F}_\nu (u).
\end{align*}
Observe that
\begin{align}
\int_{\RR^d} |u_n(x+y)-u_n(y)|\, \ud y &\leq |x|\int_{\RR^d} \int_0^1 |\nabla u_n (y+tx)|\, \ud t\, \ud y\nonumber \\
&= |x|\int_0^1 \int_{\RR^d}  |\nabla u_n (w)|\, \ud w\, \ud t \leq C|x|\, \vert Du\vert.\label{eq1}
\end{align}
Further, we have
\begin{align}\label{eq2}
\int_{\RR^d} |u_n(x+y)-u_n(y)|\, \ud y &\leq C \Vert u \Vert_{L^1}.
\end{align}
Thus, we may apply the dominated convergence theorem to arrive at
\begin{align*}
\lim_{n\to \infty} \mathcal{F}_\nu (u_n) &= \frac{1}{2}\int_{\RR^d} \lim_{n\to \infty}\int_{\RR^d} |u_n(x+y)-u_n(y)|\, \ud y\, \nu (\ud x) \\
&= \frac{1}{2}\int_{\RR^d}\int_{\RR^d} |u(x+y)-u(y)|\, \nu (\ud x)\, dy = \mathcal{F}_\nu (u).
\end{align*}
Hence, it suffices to operate on $u\in C^\infty \cap W^{1,1}(\RR^d)$. Then inequality (i) follows easily by \eqref{eq1} and \eqref{eq2}. Similarly, (ii) is a consequence of \eqref{eq1}.
\end{proof}

%We obtain the following inequality for perimeters.
\begin{corollary}
Let $E\subset\RR^d$ be such that $|E|<\infty$ and $\Per (E)<\infty$. Then
\begin{align*}
\Per_{\nu}(E)\leq C_\nu (\Per (E) + |E|).
\end{align*}
\end{corollary}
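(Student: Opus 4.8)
The statement to prove is: if $E \subset \RR^d$ has $|E| < \infty$ and $\Per(E) < \infty$, then $\Per_\nu(E) \le C_\nu(\Per(E) + |E|)$, where $C_\nu = \int (1 \wedge |x|)\,\nu(\ud x)$.

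The plan is to combine the identity $\Per_\nu(E) = \mathcal{F}_\nu(\mathbf{1}_E)$ (valid for sets of finite Lebesgue measure and finite perimeter, as recorded in the excerpt just before the Proposition) with part (i) of the preceding Proposition. First I would note that $\mathbf{1}_E \in \mathrm{BV}(\RR^d)$: indeed $\mathbf{1}_E \in L^1(\RR^d)$ since $|E| < \infty$, and $|D\mathbf{1}_E|(\RR^d) = \Per(E) < \infty$ by the definition \eqref{Perimeter_def}, so $\mathbf{1}_E$ lies in $\mathrm{BV}(\RR^d)$ with $\Vert \mathbf{1}_E \Vert_{\mathrm{BV}} = \Vert \mathbf{1}_E \Vert_{L^1} + |D\mathbf{1}_E|(\RR^d) = |E| + \Per(E)$. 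Applying Proposition~(i) to $u = \mathbf{1}_E$ then gives
\begin{align*}
\Per_\nu(E) = \mathcal{F}_\nu(\mathbf{1}_E) \le C_\nu \Vert \mathbf{1}_E \Vert_{\mathrm{BV}} = C_\nu(\Per(E) + |E|),
\end{align*}
which is exactly the claim.

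There is essentially no obstacle here; the corollary is a direct consequence of the two facts cited above, and the only thing to be careful about is making sure the identity $\Per_\nu(E) = \mathcal{F}_\nu(\mathbf{1}_E)$ is invoked under the correct hypotheses — namely $|E| < \infty$ and $\Per(E) < \infty$ — which are precisely the assumptions of the corollary. One could alternatively unwind the definitions directly and estimate $\int\int |\mathbf{1}_E(x+y) - \mathbf{1}_E(y)|\,\ud y\,\nu(\ud x)$ by splitting the inner integral over $\nu$-mass near and far from the origin, but the clean route through the Proposition makes this unnecessary.
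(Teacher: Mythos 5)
Your proof is correct and follows precisely the route the paper intends: the corollary is just Proposition~(i) specialized to $u = \mathbf{1}_E$, combined with the identity $\Per_\nu(E) = \mathcal{F}_\nu(\mathbf{1}_E)$ and the fact that $\Vert \mathbf{1}_E \Vert_{\mathrm{BV}} = |E| + \Per(E)$.
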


We next show for completeness that $\mathrm{BV}_\nu (\RR^d)$ is a Banach space.

\begin{lemma}\label{lemma:Banach}
The space $\mathrm{BV}_\nu (\RR^d)$ equipped with the norm $\Vert u\Vert_{\mathrm{BV}_{\nu}} = \Vert u\Vert_{L^1} + \mathcal{F}_{\nu}(u)$ is a Banach space.
\end{lemma}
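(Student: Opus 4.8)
The plan is the standard completeness argument, the only slightly delicate point being a measure-theoretic upgrade of almost-everywhere convergence. First I would record that $\|\cdot\|_{\mathrm{BV}_\nu}$ is genuinely a norm on a vector space: the map $(x,y)\mapsto |u(x+y)-u(y)|$ is non-negative and measurable on $\RR^d\times\RR^d$, so $\mathcal{F}_\nu(u)$ is a well-defined element of $[0,\infty]$ by Tonelli's theorem, and the triangle inequality for $|\cdot|$ together with homogeneity show that $\mathcal{F}_\nu$ is a seminorm; hence $\mathcal{F}_\nu(u+v)\le\mathcal{F}_\nu(u)+\mathcal{F}_\nu(v)<\infty$ whenever $u,v\in\mathrm{BV}_\nu(\RR^d)$, so $\mathrm{BV}_\nu(\RR^d)$ is a linear subspace of $L^1(\RR^d)$ and $\|u\|_{L^1}+\mathcal{F}_\nu(u)$ is a norm on it.

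For completeness, let $\{u_n\}$ be a Cauchy sequence in $\mathrm{BV}_\nu(\RR^d)$. Since $\|u_n-u_m\|_{L^1}\le\|u_n-u_m\|_{\mathrm{BV}_\nu}$, it is Cauchy in $L^1(\RR^d)$ and hence converges in $L^1$ to some $u\in L^1(\RR^d)$; passing to a subsequence (not relabelled) I may assume $u_n\to u$ Lebesgue-a.e. on $\RR^d$. The key step is to upgrade this to convergence of the integrands $(x,y)\mapsto |u_n(x+y)-u_n(y)|$ in the product measure $\nu(\ud x)\,\ud y$. If $N\subset\RR^d$ is the Lebesgue-null set off which $u_n\to u$ pointwise, then for \emph{every} $x\in\RR^d$ the set $N\cup(N-x)$ is again Lebesgue-null by translation invariance of Lebesgue measure, so for every $x$ and a.e.\ $y$ one has $u_n(x+y)-u_n(y)\to u(x+y)-u(y)$; by Tonelli this convergence holds $\nu\otimes\mathrm{Leb}$-a.e. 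This is the only point where one must be careful not to appeal to a generic ``a.e.''\ statement.

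With this in hand I would apply Fatou's lemma twice on the product space. Applied to $\{u_n\}$ it gives
\begin{align*}
\mathcal{F}_\nu(u)\le\liminf_{n\to\infty}\mathcal{F}_\nu(u_n)<\infty,
\end{align*}
the finiteness because a Cauchy sequence is bounded; hence $u\in\mathrm{BV}_\nu(\RR^d)$. For the convergence, fix $\varepsilon>0$ and $n_0$ with $\|u_n-u_m\|_{\mathrm{BV}_\nu}<\varepsilon$ for all $m,n\ge n_0$; for a fixed $n\ge n_0$, Fatou's lemma applied to $\{u_n-u_m\}_m$ along the a.e.-convergent subsequence yields $\mathcal{F}_\nu(u_n-u)\le\liminf_{m\to\infty}\mathcal{F}_\nu(u_n-u_m)\le\varepsilon$. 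Combined with $\|u_n-u\|_{L^1}\to 0$, this shows $\limsup_{n\to\infty}\|u_n-u\|_{\mathrm{BV}_\nu}\le\varepsilon$ for every $\varepsilon>0$, so the full sequence converges to $u$ in $\mathrm{BV}_\nu(\RR^d)$, which completes the proof. I do not expect any real obstacle beyond the translation-invariance point noted above; in particular condition \eqref{ass_1} plays no role in this lemma.
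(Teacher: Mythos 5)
Your proof is correct and rests on the same key idea as the paper's — lower semicontinuity of $\mathcal{F}_\nu$ via Fatou's lemma — but the technical execution differs, and in one respect you are more complete than the paper. The paper avoids passing to an a.e.-convergent subsequence: since $u_n\to u$ in $L^1$, the inner integral $\int_{\RR^d}|u_n(x+y)-u_n(x)|\,\ud x$ converges to $\int_{\RR^d}|u(x+y)-u(x)|\,\ud x$ for \emph{every} $y$ by $L^1$-continuity of translation (the difference is bounded by $2\|u_n-u\|_{L^1}$), and Fatou is then applied only to the outer $\nu$-integral. You instead extract an a.e.-convergent subsequence and apply Fatou on the product space $\nu\otimes\mathrm{Leb}$; the translation-invariance argument you give for product-a.e.\ convergence is exactly the delicate point that must be checked on this route, and you handle it correctly. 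The paper's variant is marginally slicker (no subsequence, no product-measure bookkeeping), but yours is no less rigorous and arguably more robust. More importantly, you carry out the norm-convergence step explicitly — applying Fatou to the differences $u_n-u_m$ to get $\mathcal{F}_\nu(u_n-u)\le\liminf_m\mathcal{F}_\nu(u_n-u_m)\le\varepsilon$ — whereas the paper only establishes $u\in\mathrm{BV}_\nu(\RR^d)$ and closes with ``we infer the result,'' leaving the verification of $\|u_n-u\|_{\mathrm{BV}_\nu}\to 0$ implicit. Your closing observation that condition \eqref{ass_1} plays no role in this lemma is also correct.
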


\begin{proof}
The function $u\mapsto \Vert u\Vert_{\mathrm{BV}_\nu}$ is evidently a norm and thus we only need to show completeness. Let $\{u_n\}$ be a Cauchy sequence in $\mathrm{BV}_\nu (\RR^d)$ and let $u$ be its $L^1$ limit. By Fatou's lemma we have
\begin{align*}
2\mathcal{F}_\nu (u) 
&=
\int_{\RR^d}  \lim_{n\to \infty} \int_{\RR^d} | u_n(x+y)-u_n(x)|\, \ud x\, \nu (\ud y)\\
&\leq
\liminf_{n\to \infty}\int_{\RR^d}\int_{\RR^d} | u_n(x+y)-u_n(x)|\, \nu (\ud y)\, \ud x = 2\liminf_{n\to \infty}\mathcal{F}_{\nu}(u_n)
\end{align*}
and as the sequence $\{u_n\}$ is bounded in $\mathrm{BV}_\nu (\RR^d)$ we infer the result.
\end{proof}

\begin{proposition}[Co-area formula]
For $u\in L^1$ we set $S_t(u)=\{ x\in \RR^d:\, u(x)>t \}$. Then
\begin{align*}
\mathcal{F}_{\nu} (u) = \int_{\RR} \Per_{\nu} (S_t(u)) \, \ud t.
\end{align*}
\end{proposition}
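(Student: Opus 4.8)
The standard co-area formula for BV functions says $|Du|(\mathbb{R}^d) = \int_{\mathbb{R}} \mathrm{Per}(S_t(u))\,dt$. The key point here is the "layer cake" representation: for a function $u$, $u(x) = \int_0^\infty \mathbf{1}_{\{u > t\}}(x)\,dt - \int_{-\infty}^0 \mathbf{1}_{\{u \le t\}}(x)\,dt$, or more simply for the difference we want to estimate: $u(x+y) - u(x) = \int_{\mathbb{R}} (\mathbf{1}_{S_t(u)}(x+y) - \mathbf{1}_{S_t(u)}(x))\,dt$.

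So the plan is: first establish the pointwise identity
$$|u(x+y) - u(x)| = \int_{\mathbb{R}} |\mathbf{1}_{S_t(u)}(x+y) - \mathbf{1}_{S_t(u)}(x)|\,dt.$$
This works because for fixed $x, y$, the integrand $\mathbf{1}_{S_t(u)}(x+y) - \mathbf{1}_{S_t(u)}(x)$ as a function of $t$ has a constant sign — it is nonzero (and equal to $\pm 1$) precisely for $t$ between $u(x)$ and $u(x+y)$ — so the absolute value passes through the integral and the $t$-integral evaluates to $|u(x+y) - u(x)|$.

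Then I would substitute this into the definition of $\mathcal{F}_\nu(u)$ and apply Tonelli's theorem (everything is nonnegative) to interchange the order of integration:
$$\mathcal{F}_\nu(u) = \frac{1}{2}\int_{\mathbb{R}^d}\int_{\mathbb{R}^d} \int_{\mathbb{R}} |\mathbf{1}_{S_t(u)}(x+y) - \mathbf{1}_{S_t(u)}(x)|\,dt\,\nu(dy)\,dx = \int_{\mathbb{R}} \left(\frac{1}{2}\int_{\mathbb{R}^d}\int_{\mathbb{R}^d}|\mathbf{1}_{S_t(u)}(x+y) - \mathbf{1}_{S_t(u)}(x)|\,\nu(dy)\,dx\right) dt,$$
and recognize the inner expression as $\mathcal{F}_\nu(\mathbf{1}_{S_t(u)})$. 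Finally, invoking the earlier observation that $\mathcal{F}_\nu(\mathbf{1}_E) = \mathrm{Per}_\nu(E)$ whenever $E$ has finite Lebesgue measure and finite perimeter (or more robustly, just noting $\mathcal{F}_\nu(\mathbf{1}_E) = \mathrm{Per}_\nu(E)$ directly from Lemma \ref{Lemma1} and the symmetrization of $\nu$, which holds for any set of finite measure) yields the claim. One should note that for $t \ge 0$ the set $S_t(u)$ has finite Lebesgue measure since $u \in L^1$, and handle $t < 0$ by the complementary sets, using Lemma \ref{Lemma1}; alternatively phrase $\mathcal{F}_\nu(\mathbf{1}_E) = \mathrm{Per}_\nu(E) + \mathrm{Per}_\nu(E^c)$ halved — but since both are equal and finite this is immaterial once one checks measurability of $t \mapsto \mathrm{Per}_\nu(S_t(u))$.

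The main obstacle is essentially bookkeeping rather than depth: one must be careful that the sets $S_t(u)$ need not all have finite Lebesgue measure (for $t<0$ the set $S_t(u) \supseteq \{u \ge 0\}$ may be co-finite rather than finite), so the identification $\mathcal{F}_\nu(\mathbf{1}_{S_t(u)}) = \mathrm{Per}_\nu(S_t(u))$ must be justified in that regime via Lemma \ref{Lemma1} (replacing $S_t(u)$ by its complement, on which $\mathbf{1}$ differs from $\mathbf{1}_{S_t(u)}$ by the constant $1$, which does not change $\mathcal{F}_\nu$). One also needs joint measurability of $(x,y,t) \mapsto \mathbf{1}_{S_t(u)}(x+y)$, which follows from measurability of $u$, to legitimately apply Tonelli. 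Apart from these routine checks, the argument is a direct computation.
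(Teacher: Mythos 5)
Your proposal is correct and follows essentially the same route as the paper: the pointwise layer-cake identity $|u(x+y)-u(x)| = \int_{\RR}|\mathbf{1}_{S_t(u)}(x+y)-\mathbf{1}_{S_t(u)}(x)|\,\ud t$, Tonelli's theorem, and then recognizing the inner double integral as $\Per_\nu(S_t(u)) + \Per_\nu(S_t(u)^c) = 2\Per_\nu(S_t(u))$ via Lemma \ref{Lemma1}. Your extra care about the $t<0$ case and joint measurability is a sound bookkeeping elaboration of the same argument, not a different method.
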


\begin{proof}
Since
\begin{align*}
u(x) = \int_0^\infty \mathbf{1}_{S_t(u)}(x)\,\ud t -\int_{-\infty}^0 (1-\mathbf{1}_{S_t(u)}(x))\,\ud t ,
\end{align*}
we have
\begin{align*}
|u(x)-u(y)| = \int_{-\infty }^\infty |\mathbf{1}_{S_t(u)}(x) -\mathbf{1}_{S_t(u)}(y)|\,\ud t .
\end{align*}
%We also observe that the perimeter of a set $E$ may be computed as follows
%\begin{align*}
%\Per_{\nu} (E) = \int_E \int_{\RR^d} (\mathbf{1}_{E}(y) - \mathbf{1}_{E}(x+y))\, \nu (\ud x)\ud y. 
%\end{align*}
Thus, by Tonelli's theorem, 
\begin{align*}
\mathcal{F}_{\nu} (u) = \frac{1}{2}\int_\RR \int_{\RR^d} \int_{\RR^d} |\mathbf{1}_{S_t(u)}(x+y) -\mathbf{1}_{S_t(u)}(y)  |\, \nu (\ud x)\, \ud y\, \ud t.
\end{align*}
In view of Lemma \ref{Lemma1} we obtain
\begin{align*}
\int_{\RR^d} \int_{\RR^d} |\mathbf{1}_{S_t(u)}(x+y) &-\mathbf{1}_{S_t(u)}(y)  |\, \nu (\ud x)\, \ud y \\
&= \int_{S_t(u)}\int_{S_t(u)^c-y}\nu (\ud x)\, \ud y 
+ \int_{S_t(u)^c}\int_{S_t(u)-y}\nu (\ud x)\, \ud y\\
&= \Per_{\nu} (S_t(u)) + \Per_{\nu} (S_t(u)^c) = 2\Per_{\nu} (S_t(u)) 
\end{align*}
and the result follows.
\end{proof}

\section{Asymptotics of the non-local perimeter}\label{sec:Asymptotics}
%\section{Limit behaviour}
In this section we show that the non-local perimeter which we introduce in the present article converge towards the classical perimeter (or to the Lebesgue measure) if we use an appropriate scaling procedure. We then apply our results to establish  convergence of fractional perimeters, $J$-perimeters and anisotropic fractional perimeters.% studied in \cite{Ludwig_Per}.

\subsection{Convergence towards the classical perimeter}
We first formulate an approximation result in the space of functions of bounded variation which can be seen as a generalization of the result by Ponce \cite{Ponce}, see also \cite{Davila} and \cite{Bourgain_Brezis_Mironescu}. By $B_R$ we denote the closed ball centred at zero and of radius $R>0$.

\begin{theorem}\label{Thm:Ponce}
Let $\{\lambda\}_{\varepsilon >0}$ be a family of probability measures on $\RR^d$ such that $\lambda_\varepsilon (\{0\})=0$. Suppose that
\begin{align}\label{tight_at_zero}
\lim_{\varepsilon \downarrow  0}\lambda_\varepsilon (B_R^c)=0,\quad \mathrm{for\ any}\ R>0.
\end{align}
Further, let $\mu_\varepsilon$ be a probability measure on the unit sphere given by
%$\lambda_\varepsilon \xrightarrow{w} \delta_0$. 
\begin{align}\label{mu_projection on the sphere}
\mu_{\varepsilon}(E)= \lambda_{\varepsilon}((0,\infty) E),\quad E\subset \mathbb{S}^{d-1},
\end{align}
 where $(0,\infty)E = \{re:\,  r>0\ \text{and}\ e\in E\}$ is a cone determined by $E$. 
 Then there exists a sequence $\varepsilon_j$ converging to zero such that for any $f\in \mathrm{BV}(\mathbb{R}^d)$
\begin{align}\label{conv_Ponce_subseq}
\lim_{j\to \infty} \int_{\mathrm{R}^d} \int_{\mathrm{R}^d} \frac{\left \vert f(x+y)-f(x) \right\vert}{|y|}\lambda_{\varepsilon_j}(\mathrm{d}y)\, \mathrm{d}x 
=
\int_{\mathbb{S}^{d-1}}\int_{\RR^d}|Df\cdot \theta|\, \mu(\mathrm{d}\theta),
\end{align}
where $\mu $ is a probability measure  on the unit sphere which is equal to the weak limit of the sequence $\{\mu_{\varepsilon_j}\}$.
\end{theorem}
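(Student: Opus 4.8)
The plan is to fix once and for all a subsequence along which the spherical marginals converge, and then to prove \eqref{conv_Ponce_subseq} for \emph{every} $f\in\mathrm{BV}(\RR^d)$ along that single subsequence, by establishing a matching upper and lower bound. Since $\mathbb{S}^{d-1}$ is a compact metric space, the set of probability measures on it is sequentially weak-$*$ compact, so from $\varepsilon=1/k$ one extracts $\varepsilon_j\downarrow 0$ with $\mu_{\varepsilon_j}\rightharpoonup\mu$ for some probability measure $\mu$ on $\mathbb{S}^{d-1}$; this $\mu$ is the one in the statement. Throughout write $F_\varepsilon(f)=\int_{\RR^d}\Phi_f(y)\,\lambda_\varepsilon(\ud y)$ for the left-hand side of \eqref{conv_Ponce_subseq}, where $\Phi_f(y)=|y|^{-1}\int_{\RR^d}|f(x+y)-f(x)|\,\ud x$, and put $G_f(\theta)=|Df\cdot\theta|(\RR^d)$ for $\theta\in\mathbb{S}^{d-1}$, so that the right-hand side of \eqref{conv_Ponce_subseq} equals $L(f):=\int_{\mathbb{S}^{d-1}}G_f(\theta)\,\mu(\ud\theta)$.

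I would first record four elementary facts. (i) The sharp directional estimate $\Phi_f(y)\le G_f(y/|y|)$ for all $y\neq0$ and $f\in\mathrm{BV}(\RR^d)$: for $f\in C^\infty\cap W^{1,1}$ this is the fundamental theorem of calculus along $[x,x+y]$ followed by Fubini, and the general case follows by mollification, since convolution does not increase total variation. (ii) $\theta\mapsto G_f(\theta)$ is Lipschitz on $\mathbb{S}^{d-1}$, because $|G_f(\theta)-G_f(\theta')|\le|Df\cdot(\theta-\theta')|(\RR^d)\le|\theta-\theta'|\,|Df|(\RR^d)$; in particular $G_f\in C(\mathbb{S}^{d-1})$ and $\|G_f\|_\infty\le|Df|(\RR^d)<\infty$. (iii) $F_\varepsilon$ is a translation-invariant seminorm, hence $F_\varepsilon(f*\rho_n)\le F_\varepsilon(f)$ for any mollifier $\rho_n$ (a one-line Tonelli computation after writing $f*\rho_n=\int\rho_n(z)\,f(\cdot-z)\,\ud z$). (iv) $G_{f*\rho_n}(\theta)\le G_f(\theta)$ and $G_{f*\rho_n}(\theta)\to G_f(\theta)$ for each $\theta$, by lower semicontinuity of total variation under weak-$*$ convergence of $\rho_n*(Df\cdot\theta)$; since $G_f\le|Df|(\RR^d)$, bounded convergence then gives $L(f*\rho_n)\to L(f)$. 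I would also use repeatedly that, because $\lambda_\varepsilon(\{0\})=0$, the measure $\mu_\varepsilon$ of \eqref{mu_projection on the sphere} is the image of $\lambda_\varepsilon$ under $y\mapsto y/|y|$, so $\int_{\RR^d}h(y/|y|)\,\lambda_\varepsilon(\ud y)=\int_{\mathbb{S}^{d-1}}h\,\ud\mu_\varepsilon$ for every bounded Borel $h$.

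Next I would settle the smooth case: for $g\in C^\infty(\RR^d)\cap W^{1,1}(\RR^d)$, $F_{\varepsilon_j}(g)\to L(g)$. Writing $y=r\theta$ and using $\frac{g(x+y)-g(x)}{r}=\int_0^1\nabla g(x+tr\theta)\cdot\theta\,\ud t$ gives $|\Phi_g(y)-G_g(\theta)|\le\sup_{|z|\le r}\|\nabla g(\cdot+z)-\nabla g\|_{L^1}=:\eta_g(r)$, uniformly in $\theta$, with $\eta_g(r)\to0$ as $r\downarrow0$ by $L^1$-continuity of translations. For fixed $\delta>0$, splitting the $\lambda_{\varepsilon_j}$-integral over $B_\delta\setminus\{0\}$ and $B_\delta^c$ and using (i) on $B_\delta^c$,
\[
\Bigl|F_{\varepsilon_j}(g)-\int_{\mathbb{S}^{d-1}}G_g\,\ud\mu_{\varepsilon_j}\Bigr|\le\eta_g(\delta)+2\|G_g\|_\infty\,\lambda_{\varepsilon_j}(B_\delta^c).
\]
Letting $j\to\infty$ — so $\lambda_{\varepsilon_j}(B_\delta^c)\to0$ by \eqref{tight_at_zero} and $\int_{\mathbb{S}^{d-1}}G_g\,\ud\mu_{\varepsilon_j}\to\int_{\mathbb{S}^{d-1}}G_g\,\ud\mu=L(g)$ since $G_g$ is continuous — and then $\delta\downarrow0$ yields $F_{\varepsilon_j}(g)\to L(g)$.

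Finally, for $f\in\mathrm{BV}(\RR^d)$ put $f_n=f*\rho_n\in C^\infty\cap W^{1,1}$ with $f_n\to f$ in $L^1$. The \emph{upper} bound is immediate from (i) and (ii): $F_{\varepsilon_j}(f)\le\int_{\mathbb{S}^{d-1}}G_f\,\ud\mu_{\varepsilon_j}\to\int_{\mathbb{S}^{d-1}}G_f\,\ud\mu=L(f)$. For the \emph{lower} bound, (iii) and the smooth case give $\liminf_j F_{\varepsilon_j}(f)\ge\liminf_j F_{\varepsilon_j}(f_n)=L(f_n)$, and then $n\to\infty$ together with (iv) yields $\liminf_j F_{\varepsilon_j}(f)\ge L(f)$; combining gives \eqref{conv_Ponce_subseq}. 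I expect the genuine obstacle to be precisely this passage from smooth functions to $\mathrm{BV}$: the naive bound $|F_\varepsilon(f)-F_\varepsilon(f_n)|\le F_\varepsilon(f-f_n)\le 2|D(f-f_n)|(\RR^d)$ is useless because $|D(f-f_n)|(\RR^d)$ need not tend to $0$ when $Df$ has a singular part, so the $\limsup$ and $\liminf$ must be treated by different devices (the sharp directional estimate plus continuity of $G_f$ for the former; monotonicity of $F_\varepsilon$ under mollification for the latter). A secondary point requiring care is the justification of fact (i) and of the change of variables $y\mapsto y/|y|$, both of which hinge on $\lambda_\varepsilon(\{0\})=0$.
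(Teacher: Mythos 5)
Your proof is correct and follows essentially the same sandwich strategy as the paper's Appendix: the upper bound via a directional estimate of the form $\Phi_f(y)\le|Df\cdot(y/|y|)|(\RR^d)$ (the paper's Lemma \ref{Lemma_Ponce_1}), the lower bound via monotonicity of the functional under mollification (Lemma \ref{Lemma_Ponce_2}) combined with the smooth case (Lemma \ref{Lemma_Ponce_3}), and the weak-$*$ compactness of probability measures on $\mathbb{S}^{d-1}$ to extract $\varepsilon_j$. Your execution is tidier in a few places — you state the directional estimate pointwise in $y$ rather than after integrating against $\lambda$, handle the smooth case on all of $\RR^d$ for $C^\infty\cap W^{1,1}$ with an explicit modulus $\eta_g(r)$ instead of $C^2$ on balls with a triple iterated limit, and replace the paper's claim of uniform-in-$\theta$ convergence of $\int|\nabla f_\delta\cdot\theta|$ by pointwise convergence plus bounded convergence — but these are refinements of bookkeeping rather than a different route, and your closing paragraph correctly identifies the same crux (the passage from smooth to $\mathrm{BV}$) that drives the paper's lemma structure.
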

The proof is postponed to the Appendix given in Section \ref{sec:Appendix}.
\medskip

\begin{remark}
In many applications we are able to conclude the weak convergence of the whole sequence $\{\mu_\varepsilon \}$ towards the limit measure  $\mu$. In such cases convergence in \eqref{conv_Ponce_subseq} holds for $\varepsilon \downarrow 0$ and not only along a subsequence. This fact follows directly from the proof of Theorem \ref{Thm:Ponce}.
\end{remark}

We apply Theorem \ref{Thm:Ponce} in the case when the probability measure $\lambda_\varepsilon$ is absolutely continuous with respect to a L\'{e}vy measure. We start by giving a general result and then present a few examples.

\begin{theorem}\label{Thm:focus_zero}
Let $\{\nu_\varepsilon\}_{\varepsilon >0}$ be a family of L\'{e}vy measures satisfying \eqref{ass_1} and let 
\begin{align*}
\lambda_\varepsilon (\mathrm{d}x) = C_\varepsilon^{-1}\left( R_\varepsilon \wedge |x|\right)\nu_\varepsilon (\mathrm{d}x),
\end{align*}
where $C_\varepsilon = \int_{\mathbb{R}^d}\left( R_\varepsilon \wedge |x|\right)\nu_\varepsilon (\mathrm{d}x)$ and $R_\varepsilon \in [1,\infty]$. 
Let $\mu_{\varepsilon}$ be the corresponding probability measure on the unit sphere defined in \eqref{mu_projection on the sphere}.
Under condition \eqref{tight_at_zero}, there exists a sequence $\varepsilon_j$ converging to zero such that 
%$\mu_{\varepsilon_j} \xrightarrow{w} \mu$ and 
for any $f\in \mathrm{BV}(\RR^d)$ we have
\begin{align}\label{Result_at_zero}
\lim_{j\to \infty} C_{\varepsilon_j}^{-1}\mathcal{F}_{\nu_{\varepsilon_j}} (f)
= 
\frac{1}{2}\int_{\mathbb{S}^{d-1}}\int_{\RR^d}|Df\cdot \theta|\, \mu(\mathrm{d}\theta),
\end{align}
where $\mu$ is the weak limit of the sequence $\{\mu_{\varepsilon_j}\}$.
\end{theorem}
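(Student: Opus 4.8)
The plan is to deduce the statement directly from Theorem \ref{Thm:Ponce} applied to the family $\{\lambda_\varepsilon\}$ defined in the statement. First I would check that each $\lambda_\varepsilon$ is indeed a probability measure: this is immediate from the normalization by $C_\varepsilon$, together with the observation that $C_\varepsilon\in(0,\infty)$ since $R_\varepsilon\wedge|x|\le 1\wedge|x|$ when $R_\varepsilon\ge 1$ (so the integral is finite by \eqref{ass_1}) and $C_\varepsilon>0$ because $\nu_\varepsilon$ is a nonzero measure with $\nu_\varepsilon(\{0\})=0$, while $R_\varepsilon\wedge|x|>0$ for $x\neq 0$. Also $\lambda_\varepsilon(\{0\})=0$ follows from $\nu_\varepsilon(\{0\})=0$. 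The tightness-at-zero hypothesis \eqref{tight_at_zero} is assumed as a hypothesis here, so it carries over verbatim, and $\mu_\varepsilon$ is exactly the projection onto the sphere as in \eqref{mu_projection on the sphere}. Hence Theorem \ref{Thm:Ponce} yields a subsequence $\varepsilon_j\to 0$ and a probability measure $\mu$ on $\mathbb{S}^{d-1}$ (the weak limit of $\mu_{\varepsilon_j}$) such that \eqref{conv_Ponce_subseq} holds for every $f\in\mathrm{BV}(\RR^d)$.

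The remaining task is purely bookkeeping: to rewrite the left-hand side of \eqref{conv_Ponce_subseq} in terms of $\mathcal F_{\nu_{\varepsilon_j}}(f)$. Substituting the definition of $\lambda_\varepsilon$ gives
\begin{align*}
\int_{\RR^d}\int_{\RR^d}\frac{|f(x+y)-f(x)|}{|y|}\,\lambda_\varepsilon(\ud y)\,\ud x
= C_\varepsilon^{-1}\int_{\RR^d}\int_{\RR^d}\frac{R_\varepsilon\wedge|y|}{|y|}\,|f(x+y)-f(x)|\,\nu_\varepsilon(\ud y)\,\ud x.
\end{align*}
I would point out that this is \emph{not} exactly $2C_\varepsilon^{-1}\mathcal F_{\nu_\varepsilon}(f)$ because of the factor $(R_\varepsilon\wedge|y|)/|y|$, which equals $1$ only for $|y|\le R_\varepsilon$; so, strictly speaking, the theorem as I am reading it should be interpreted with the inner double integral weighted by this factor, or else one wants $R_\varepsilon=\infty$ (the most important case, where $R_\varepsilon\wedge|y|=|y|$ and the factor is identically $1$). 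In the case $R_\varepsilon\equiv\infty$ the weight disappears and the left-hand side is exactly $2C_\varepsilon^{-1}\mathcal F_{\nu_\varepsilon}(f)$, so \eqref{conv_Ponce_subseq} becomes precisely \eqref{Result_at_zero} after dividing by $2$. For finite $R_\varepsilon$ one would instead note $\tfrac12(R_\varepsilon\wedge|y|)/|y|\cdot$ appears and carry this weight through, or absorb it into the measure; I would flag this as the one delicate point of the reduction and handle it by treating the truncated kernel $(R_\varepsilon\wedge|y|)\nu_\varepsilon(\ud y)$ as the relevant Lévy measure.

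The main obstacle is therefore not in the argument for Theorem \ref{Thm:focus_zero} itself, which is a short corollary, but rather in the proof of Theorem \ref{Thm:Ponce} (deferred to the Appendix): establishing the subsequential weak compactness of $\{\mu_\varepsilon\}$ on the compact sphere (automatic by Prokhorov) and, more substantially, upgrading the pointwise/directional convergence of difference quotients to the $\mathrm{BV}$ functional, i.e.\ showing that $\int\int |f(x+y)-f(x)|/|y|\,\lambda_\varepsilon(\ud y)\,\ud x \to \int_{\mathbb S^{d-1}}\int_{\RR^d}|Df\cdot\theta|\,\mu(\ud\theta)$ under only the tightness-at-zero assumption. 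Given that result, the present theorem follows by the substitution and normalization described above; I would also add the standard remark that if $\{\mu_\varepsilon\}$ itself converges weakly (not merely along a subsequence), then \eqref{Result_at_zero} holds as $\varepsilon\downarrow 0$.
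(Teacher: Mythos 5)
You correctly reduce the problem to applying Theorem~\ref{Thm:Ponce}, check the hypotheses on $\lambda_\varepsilon$, and you even correctly spot the discrepancy between the Ponce integrand $\tfrac{|f(x+y)-f(x)|}{|y|}$ and the quantity $C_\varepsilon^{-1}\mathcal{F}_{\nu_\varepsilon}(f)=\tfrac12\int\!\!\int \tfrac{|f(x+y)-f(x)|}{R_\varepsilon\wedge|y|}\,\lambda_\varepsilon(\ud y)\,\ud x$. However, having flagged the issue, you do not actually resolve it. The suggestions you offer --- ``carry this weight through,'' ``absorb it into the measure,'' or ``treat $(R_\varepsilon\wedge|y|)\nu_\varepsilon(\ud y)$ as the relevant L\'evy measure'' --- do not close the gap: absorbing the weight changes the measure whose spherical projection you are passing to the limit, and the truncated kernel is just $C_\varepsilon\lambda_\varepsilon$ again, which gets you nowhere. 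Your implicit suggestion that the theorem should be reinterpreted or restricted to $R_\varepsilon=\infty$ is not warranted; the statement is correct as written.

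The missing step is quantitative and short, and it is precisely what the paper's proof supplies. The two integrands agree on $B_{R_\varepsilon}$ (since $R_\varepsilon\wedge|y|=|y|$ there), so the difference lives entirely on $B_{R_\varepsilon}^c$, where both are dominated by $2\|f\|_{L^1}/R_\varepsilon$. Hence each tail contribution is bounded by
\begin{align*}
\int_{\RR^d}\int_{B_{R_\varepsilon}^c}\frac{|f(x+y)-f(x)|}{R_\varepsilon\wedge|y|}\,\lambda_\varepsilon(\ud y)\,\ud x
\;\le\; \frac{2\|f\|_{L^1}}{R_\varepsilon}\,\lambda_\varepsilon(B_{R_\varepsilon}^c)
\;\le\; 2\|f\|_{L^1}\,\lambda_\varepsilon(B_1^c),
\end{align*}
where the last inequality uses $R_\varepsilon\ge 1$ twice (once to bound $1/R_\varepsilon\le 1$, once to bound $\lambda_\varepsilon(B_{R_\varepsilon}^c)\le\lambda_\varepsilon(B_1^c)$), and this tends to zero by the tightness hypothesis \eqref{tight_at_zero}. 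Similarly the full Ponce integral over $B_{R_\varepsilon}^c$ vanishes. Thus $C_{\varepsilon_j}^{-1}\mathcal{F}_{\nu_{\varepsilon_j}}(f)$ and $\tfrac12\int\!\!\int\tfrac{|f(x+y)-f(x)|}{|y|}\,\lambda_{\varepsilon_j}(\ud y)\,\ud x$ have the same limit, and Theorem~\ref{Thm:Ponce} applies. You need to include this estimate; without it, the case $R_\varepsilon<\infty$ is not proved, and that case is essential (for instance, the fractional perimeter application in Proposition~\ref{Prop:stable_at_zero} uses $R_\varepsilon=1$).
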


\begin{proof}
We split the integral as follows
\begin{align}\label{split_1}
C_{\varepsilon_j}^{-1}\mathcal{F}_{\nu_{\varepsilon_j}} (f)
&=
\frac{1}{2}\left( \int_{\RR^d}\int_{B_{R_{\varepsilon_j}}} +  \int_{\RR^d}\int_{B_{R_{\varepsilon_j}}^c}\right)
\frac{\vert f(x+y)-f(x)|}{R_{\varepsilon_j}\wedge |y|}\lambda_{\varepsilon_j}(\mathrm{d}y)\mathrm{d}x.
\end{align}
We have
\begin{align*}
 \int_{\RR^d}\int_{B_{R_{\varepsilon_j}}} \!\!\!\!\!\!
 \frac{\vert f(x+y)-f(x)|}{|y|}\lambda_{\varepsilon_j}(\mathrm{d}y)\mathrm{d}x
 =\!\!
 \left(  \int_{\RR^d}\int_{\RR^d}\!\! - \!\!\int_{\RR^d}\int_{B_{R_{\varepsilon_j}}^c}  
 \right)
 \frac{\vert f(x+y)-f(x)|}{|y|}\lambda_{\varepsilon_j}(\mathrm{d}y)\mathrm{d}x
\end{align*}
and the first integral converges by Theorem \ref{Thm:Ponce}  to (two times) the right hand side of \eqref{Result_at_zero} while the second integral converges to zero as
\begin{align*}
\int_{\RR^d}\int_{B_{R_{\varepsilon_j}}^c}  
\frac{\vert f(x+y)-f(x)|}{|y|}\lambda_{\varepsilon_j}(\mathrm{d}y)\mathrm{d}x
\leq
2\Vert f\Vert_{L^1}\frac{\lambda_{\varepsilon_j}(B_{R_{\varepsilon_j}}^c)}{R_{\varepsilon_j}}\leq 2\Vert f\Vert_{L^1} \lambda_{\varepsilon_j}(B_1^c)\to 0.
\end{align*}
The second integral in \eqref{split_1} is negligible by the argument from the previous line.
\end{proof}

\begin{corollary}\label{cor:per}
In the notation of Theorem \ref{Thm:focus_zero}, let $f=\mathbf{1}_E$ 
where $E\subset \RR^d$ is a set of finite Lebesgue measure and of  finite perimeter (i.e.\ $\mathbf{1}_E\in \mathrm{BV}(\RR^d)$). Then
\begin{align*}
\lim_{j\to \infty} C_{\varepsilon_j}^{-1}\mathrm{Per}_{\nu_{\varepsilon_j}} (E)
= 
\frac{1}{2}
\int_{\mathbb{S}^{d-1}}\int_{\RR^d} |D \mathbf{1}_E \cdot \theta|\, \mu(\mathrm{d}\theta).
\end{align*}
In particular, if $\mu$ is the (normalized) uniform measure  on $\mathbb{S}^{d-1}$ then 
\begin{align*}
\lim_{j\to \infty} C_{\varepsilon_j}^{-1}\mathrm{Per}_{\nu_{\varepsilon_j}} (E)
= 
\frac{K_{1,d}}{2\kappa_{d-1}}\Per (E),
\end{align*}
where $K_{1,d}$ is the constant from \eqref{K_1d}.
%\begin{align*}
%K_{1,d} = \int_{\mathbb{S}^{d-1}}|e\cdot \theta|\, \sigma(\ud \theta) ,\quad |e|=1
%\end{align*}
%and $\kappa_{d-1}=\sigma (\mathbb{S}^{d-1})$, where $\sigma$ is the surface measure on the unit sphere.
\end{corollary}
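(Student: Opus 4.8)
The plan is to deduce Corollary \ref{cor:per} directly from Theorem \ref{Thm:focus_zero} by specializing the test function. Since $E$ has finite Lebesgue measure and finite perimeter, the indicator $\mathbf{1}_E$ belongs to $\mathrm{BV}(\RR^d)$, so $f = \mathbf{1}_E$ is an admissible choice in Theorem \ref{Thm:focus_zero}. The first step is to record that $\Per_{\nu}(E) = \mathcal{F}_\nu(\mathbf{1}_E)$ for every set of finite measure and finite perimeter; this identity was already established in the paragraph preceding the proposition on $\mathrm{BV}$-containment (it uses Lemma \ref{Lemma1} together with the fact that $\mathbf{1}_E \in \mathrm{BV}_\nu(\RR^d)$, which holds by the inclusion $\mathrm{BV}(\RR^d) \subset \mathrm{BV}_\nu(\RR^d)$). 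Applying this with $\nu = \nu_{\varepsilon_j}$ and substituting into \eqref{Result_at_zero} immediately yields
\begin{align*}
\lim_{j\to \infty} C_{\varepsilon_j}^{-1}\Per_{\nu_{\varepsilon_j}}(E) = \frac{1}{2}\int_{\mathbb{S}^{d-1}}\int_{\RR^d} |D\mathbf{1}_E \cdot \theta|\, \mu(\mathrm{d}\theta),
\end{align*}
which is the first claimed equality.

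For the second (particular) case, suppose $\mu$ is the normalized uniform (surface) measure on $\mathbb{S}^{d-1}$, i.e.\ $\mu(\mathrm{d}\theta) = \kappa_{d-1}^{-1}\sigma(\mathrm{d}\theta)$ where $\kappa_{d-1} = \sigma(\mathbb{S}^{d-1})$. I would then interchange the order of integration (Tonelli, everything nonnegative) to write
\begin{align*}
\int_{\mathbb{S}^{d-1}}\int_{\RR^d} |D\mathbf{1}_E \cdot \theta|\, \mu(\mathrm{d}\theta) = \frac{1}{\kappa_{d-1}}\int_{\RR^d}\left(\int_{\mathbb{S}^{d-1}} |\theta \cdot \tfrac{D\mathbf{1}_E}{|D\mathbf{1}_E|}(x)|\, \sigma(\mathrm{d}\theta)\right) |D\mathbf{1}_E|(\mathrm{d}x),
\end{align*}
using the polar decomposition $D\mathbf{1}_E = \sigma_E\, |D\mathbf{1}_E|$ with $|\sigma_E(x)| = 1$ for $|D\mathbf{1}_E|$-a.e.\ $x$ (see \cite{Ambrosio_2000}). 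The inner integral is rotation-invariant, hence independent of the unit vector $\sigma_E(x)$, and equals $\int_{\mathbb{S}^{d-1}}|e\cdot\theta|\,\sigma(\mathrm{d}\theta) = K_{1,d}$ by the very definition \eqref{K_1d}. Therefore the double integral reduces to $\kappa_{d-1}^{-1}K_{1,d}\,|D\mathbf{1}_E|(\RR^d) = \kappa_{d-1}^{-1}K_{1,d}\,\Per(E)$, and multiplying by the prefactor $1/2$ gives the stated constant $K_{1,d}/(2\kappa_{d-1})$.

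There is essentially no obstacle here: the corollary is a straightforward specialization plus a rotation-invariance computation on the sphere. The only points demanding a line of care are (i) justifying $\Per_{\nu_{\varepsilon_j}}(E) = \mathcal{F}_{\nu_{\varepsilon_j}}(\mathbf{1}_E)$ — but this is exactly what was noted earlier via Lemma \ref{Lemma1} and the $\mathrm{BV}$-inclusion, valid because $|E|<\infty$ and $\Per(E)<\infty$ — and (ii) the use of the polar decomposition of the vector measure $D\mathbf{1}_E$ to pull the constant $K_{1,d}$ out of the integral, which is standard BV theory. No new estimates, limits, or approximation arguments are needed beyond what Theorem \ref{Thm:focus_zero} already supplies.
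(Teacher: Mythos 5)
Your proof is correct and is essentially the argument the paper leaves implicit (the corollary is stated without a written proof). You correctly reduce to the identity $\Per_\nu(E)=\mathcal{F}_\nu(\mathbf{1}_E)$ established in Section~2, apply Theorem~\ref{Thm:focus_zero} with $f=\mathbf{1}_E$, and for the uniform case use the polar decomposition of $D\mathbf{1}_E$, Tonelli, and the rotation-invariance of $\int_{\mathbb{S}^{d-1}}|e\cdot\theta|\,\sigma(\mathrm{d}\theta)=K_{1,d}$ — precisely the computation the paper itself carries out later (around \eqref{Radon-Nikodym} and \eqref{polar_outer_normal}) in the anisotropic setting.
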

We next illustrate Theorem \ref{Thm:focus_zero} and Corollary \ref{cor:per} by a few examples.
We start with the following result which is an application of Theorem \ref{Thm:focus_zero} to stable L\'{e}vy measures.

\begin{proposition}\label{Prop:stable_at_zero}
Let $\nu_\alpha$ be an $\alpha$-stable L\'{e}vy measure with its spectral decomposition given by
\begin{align}\label{alpha_stable_Levy_meas_case_at zero}
\nu_\alpha (A) = \int_{\mathbb{S}^{d-1}}\int_{0}^\infty \mathbf{1}_A(r \theta) r^{-1-\alpha}\mathrm{d}r \, \eta (\mathrm{d}\theta),\quad \alpha \in (0,1),
\end{align}
where $\eta $ is a probability measure on $\mathbb{S}^{d-1}$. Then, for any $f\in \mathrm{BV}(\RR^d)$,
\begin{align*}
\lim_{\alpha \uparrow 1}(1-\alpha )\mathcal{F}_{\nu_\alpha}(f)=\frac{1}{2}\int_{\mathbb{S}^{d-1}}\int_{\RR^d} |Df \cdot \theta|\, \mu (\mathrm{d}\theta),
\end{align*}
where $\mu$ is the measure on $\mathbb{S}^{d-1}$ constructed according to Theorem \ref{Thm:focus_zero}. In particular, if the measure $\eta$ is uniform on the unit sphere then,
for any set $E\subset \RR^d$ of finite Lebesgue measure and  such that $\Per (E)<\infty$, it holds
\begin{align*}
\lim_{\alpha \uparrow 1}\, (1-\alpha )\Per_{\nu_{\alpha}}(E) = \frac{K_{1,d}}{2\kappa_{d-1}}\Per (E).
\end{align*}
\end{proposition}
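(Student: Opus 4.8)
The plan is to derive Proposition~\ref{Prop:stable_at_zero} as a direct instance of Theorem~\ref{Thm:focus_zero} with the choice $R_\varepsilon=\infty$ (so that $R_\varepsilon\wedge|x|=|x|$), after passing to a rescaled family of measures that exhibits the required concentration at the origin. Specifically, for a fixed parameter $\varepsilon>0$ I would set $\alpha=\alpha(\varepsilon)$ so that $\alpha\uparrow 1$ as $\varepsilon\downarrow 0$, and examine the normalizing constant $C_\alpha=\int_{\RR^d}|x|\,\nu_\alpha(\ud x)$. Using the spectral decomposition \eqref{alpha_stable_Levy_meas_case_at zero} one gets $C_\alpha=\int_{\Ss^{d-1}}\int_0^\infty r\cdot r^{-1-\alpha}\,\ud r\,\eta(\ud\theta)=\int_0^\infty r^{-\alpha}\,\ud r$, which diverges; this forces a truncation. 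The correct move is therefore to work with $R_\varepsilon=1$ (or some fixed $R>0$), i.e.\ $\lambda_\alpha(\ud x)=C_\alpha^{-1}(1\wedge|x|)\nu_\alpha(\ud x)$ with $C_\alpha=\int_{\RR^d}(1\wedge|x|)\nu_\alpha(\ud x)$. A short computation in polar coordinates gives
\begin{align*}
C_\alpha=\int_{\Ss^{d-1}}\Bigl(\int_0^1 r\cdot r^{-1-\alpha}\,\ud r+\int_1^\infty r^{-1-\alpha}\,\ud r\Bigr)\eta(\ud\theta)=\frac{1}{1-\alpha}+\frac{1}{\alpha}=\frac{1}{\alpha(1-\alpha)},
\end{align*}
so that $C_\alpha^{-1}=\alpha(1-\alpha)\sim(1-\alpha)$ as $\alpha\uparrow 1$. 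This is the source of the claimed normalization: $C_\alpha^{-1}\mathcal F_{\nu_\alpha}(f)$ and $(1-\alpha)\mathcal F_{\nu_\alpha}(f)$ have the same limit because $\alpha\to 1$.

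The next step is to verify the concentration hypothesis \eqref{tight_at_zero} for the family $\lambda_\alpha$. For $R>0$ and $\alpha$ close to $1$ one computes, again in polar coordinates,
\begin{align*}
\lambda_\alpha(B_R^c)=C_\alpha^{-1}\int_{\Ss^{d-1}}\int_R^\infty (1\wedge r)r^{-1-\alpha}\,\ud r\,\eta(\ud\theta)=C_\alpha^{-1}\int_R^\infty r^{-1-\alpha}\,\ud r\ \le\ \alpha(1-\alpha)\cdot\frac{R^{-\alpha}}{\alpha}\ \longrightarrow\ 0
\end{align*}
for $R\ge 1$ (the case $R<1$ is handled similarly by splitting the inner integral at $1$, picking up an extra term $\int_R^1 r\cdot r^{-1-\alpha}\,\ud r\le\int_R^1 r^{-\alpha}\,\ud r\le(1-\alpha)^{-1}(1-R^{1-\alpha})$, whose contribution after multiplication by $C_\alpha^{-1}$ also vanishes). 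Hence Theorem~\ref{Thm:focus_zero} applies along any sequence $\alpha_j\uparrow 1$, yielding convergence of $C_{\alpha_j}^{-1}\mathcal F_{\nu_{\alpha_j}}(f)$ to $\tfrac12\int_{\Ss^{d-1}}\int_{\RR^d}|Df\cdot\theta|\,\mu(\ud\theta)$ along a further subsequence, where $\mu$ is the weak limit of the spherical projections $\mu_{\alpha_j}$.

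It remains to identify $\mu_\alpha$ and show the limit is independent of the subsequence. By \eqref{mu_projection on the sphere} and the product structure of $\nu_\alpha$, for $E\subset\Ss^{d-1}$ we have $\mu_\alpha(E)=\lambda_\alpha((0,\infty)E)=C_\alpha^{-1}\eta(E)\int_0^\infty(1\wedge r)r^{-1-\alpha}\,\ud r=C_\alpha^{-1}\cdot C_\alpha\cdot\eta(E)=\eta(E)$; that is, $\mu_\alpha=\eta$ for \emph{every} $\alpha$, so the whole family is constant and the weak limit is simply $\mu=\eta$. By the Remark following Theorem~\ref{Thm:Ponce}, the convergence then holds for the full limit $\alpha\uparrow 1$, not merely along a subsequence, giving the first displayed formula. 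The second assertion follows by taking $f=\mathbf 1_E$ (so $\mathcal F_{\nu_\alpha}(\mathbf 1_E)=\Per_{\nu_\alpha}(E)$ by the identity $\mathcal F_\nu(\mathbf 1_E)=\Per_\nu(E)$ recorded before Lemma~\ref{lemma:Banach}) and specializing to $\eta$ uniform on $\Ss^{d-1}$: then $\int_{\Ss^{d-1}}\int_{\RR^d}|D\mathbf 1_E\cdot\theta|\,\eta(\ud\theta)$ is rotation-invariant in the vector $D\mathbf 1_E$, so it equals $\kappa_{d-1}^{-1}\bigl(\int_{\Ss^{d-1}}|e\cdot\theta|\,\sigma(\ud\theta)\bigr)\int_{\RR^d}|D\mathbf 1_E|=\kappa_{d-1}^{-1}K_{1,d}\Per(E)$, whence the factor $K_{1,d}/(2\kappa_{d-1})$. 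I expect no serious obstacle here; the only point requiring care is the polar-coordinate bookkeeping for $C_\alpha$ and for $\lambda_\alpha(B_R^c)$ when $R<1$, and keeping track of the fact that the normalization $C_\alpha^{-1}$ is asymptotically $(1-\alpha)$ rather than exactly equal to it—this is why the statement is phrased with $(1-\alpha)$.
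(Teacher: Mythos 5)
Your proposal is correct and follows the same route as the paper: apply Theorem~\ref{Thm:focus_zero} with $R_\varepsilon=1$, compute $C_\alpha=\int_0^\infty (1\wedge r)r^{-1-\alpha}\,\ud r=\frac{1}{\alpha}+\frac{1}{1-\alpha}\sim\frac{1}{1-\alpha}$, verify $\lambda_\alpha(B_R^c)\to 0$, observe $\mu_\alpha=\eta$ for every $\alpha$ so the whole family converges (making the Remark after Theorem~\ref{Thm:Ponce} applicable), and then specialize $f=\mathbf{1}_E$ and $\eta$ uniform to extract the constant $K_{1,d}/(2\kappa_{d-1})$. Your preliminary observation that $R_\varepsilon=\infty$ fails because $\int_0^\infty r^{-\alpha}\,\ud r$ diverges is a useful motivating remark not in the paper, but otherwise the argument is essentially identical.
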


\begin{proof}
We aim to apply Theorem \ref{Thm:focus_zero}. We define measures $\lambda_\alpha$ as follows
\begin{align*}
\lambda_\alpha (A) = C_{\alpha}^{-1}\int_{\mathbb{S}^{d-1}}\int_0^\infty (1\wedge r)\mathbf{1}_{A}(r\theta) r^{-1-\alpha}\ud r\, \eta (\theta),\quad C_\alpha = \int_0^\infty (1\wedge r)r^{-1-\alpha }\ud r. 
\end{align*}
We first observe that
\begin{align*}
C_\alpha &= \int_1^\infty r^{-1-\alpha }\ud r + \int_0^1r^{-\alpha}\ud r = \frac{1}{\alpha}+\frac{1}{1-\alpha}\sim \frac{1}{1-\alpha},\quad \alpha \uparrow 1.
\end{align*}
Further, for any $R>0$,
\begin{align*}
\lambda_\alpha (B_R^c) = C_\alpha^{-1}\cdot
\begin{cases}
\int_R^\infty r^{-1-\alpha} \ud r = \frac{1}{\alpha}R^{-\alpha},\quad R>1;\\
\int_R^1 r^{-\alpha}\ud r + \int_1^\infty r^{-1-\alpha}\ud r= \frac{1}{1-\alpha}(1-R^{-\alpha +1})+ \frac{1}{\alpha},\quad R\leq 1.
\end{cases}
\end{align*}
We easily infer that 
\begin{align*}
\lim_{\alpha\uparrow 1}\lambda_{\alpha}(B_R^c)=0,\quad R>0.
\end{align*}
The measures $\mu_\alpha$ are given by
\begin{align*}
\mu_\alpha (E) =  \eta(E),\quad E\subset \mathbb{S}^{d-1},
\end{align*}
which evidently implies $\mu_\alpha \xrightarrow[\alpha \uparrow 1]{w}\eta$ 
and the result follows. 
\end{proof}

\begin{example}[Asymptotics of $\alpha$-perimeters for $\alpha \uparrow 1$]
As a direct application of Proposition \ref{Prop:stable_at_zero} we obtain the well-known convergence of $\alpha$-perimeters to the classical perimeter for sets of finite perimeter, see \cite{Caffarelli-Valdinoci}, \cite{Ambrosio-Per}, \cite{Valdinoci_Milan}, \cite{Davila}, \cite{Bourgain_Brezis_Mironescu}.
Let the measure $\nu_\alpha$ be rotationally invariant and given by
\begin{align}\label{rot_inv_stable_lev_meas}
\nu_\alpha (\mathrm{d}x) = \frac{\alpha\, \mathrm{d}x}{\kappa_{d-1}|x|^{d+\alpha}},\quad \alpha \in (0,1),
\end{align}
where $\kappa_{d-1}=2\pi^{d/2}/\Gamma (d/2)$ is the surface area of $\mathbb{S}^{d-1}$. For such measure it holds
\begin{align}\label{Our_Per_to_Alpha}
\Per_{\nu_\alpha}(E) = \frac{\alpha}{\kappa_{d-1}}\Per_\alpha (E),
\end{align}
where $\nu_\alpha$ is given by \eqref{alpha_stable_Levy_meas_case_at zero} with $\eta$ equal to ($\alpha$ times) the normalized surface measure on the unit sphere. Then, by Proposition \ref{Prop:stable_at_zero}, we recover \eqref{Per_alpha_conv_1} for any set $E\subset \RR^d$ of finite Lebesgue measure and such that $\Per(E)<\infty$.
%, , we have
%\begin{align*}
%\lim_{\alpha \uparrow 1}\, (1-\alpha)\Per_{\alpha}(E) =\frac{K_{1,d}}{2} \Per (E).
%\end{align*}
\end{example}

\begin{example}[Asymptotics of $J$-perimeters]\label{J_perim}
Let $\nu (\ud x) = J(x)\, \ud x$, where $J$ is a positive function such that 
%$J(x)=J(-x)$ and 
$C_J=\int_{\RR^d}|x|J(x)\, \ud x <\infty$, see \cite{Cesaroni}. For any $\varepsilon >0$ let $J_\varepsilon (x) = \varepsilon^{-d}J(x/\varepsilon)$ and $\nu_\varepsilon (\ud x) = J_\varepsilon (x)\, \ud x$. Clearly, $\Per_{\nu_\varepsilon}(E) = \Per_{J_\varepsilon}(E)$. In this case we can apply Theorem \ref{Thm:focus_zero} with $R_\varepsilon =\infty$ and we easily verify that the corresponding measures $\lambda_\varepsilon$ satisfy condition \eqref{tight_at_zero}. Further, for any $\varepsilon >0$ we have
\begin{align*}
\mu_\varepsilon (E) = C_J^{-1}\int_{(0,\infty)E}|x|J(x)\, \ud x =: \mu_J (E),\quad E\subset \mathbb{S}^{d-1}.
\end{align*}
Hence, by Corollary \ref{cor:per}, for any $E\subset \RR^d$ of finite Lebesgue measure and of finite perimeter, 
\begin{align*}
\lim_{\varepsilon \downarrow 0}\varepsilon ^{-1}\Per_{\nu_\varepsilon}(E) = 
\frac{C_J}{2}
\int_{\mathbb{S}^{d-1}}\int_{\RR^d} |D \mathbf{1}_E \cdot \theta|\, \mu_J(\mathrm{d}\theta).
\end{align*} 
In particular, if the kernel $J$ is radially symmetric then this leads to \eqref{Per_J_conv} and we observe  that we do not need to assume that $J$ is compactly supported.

We can use the scaling procedure of Example \ref{J_perim} in a slightly more general context. 
Let $\nu$ be a measure satisfying \eqref{ass_1} and let $\nu_\varepsilon (A)=\nu (A/\varepsilon)$ for any $\varepsilon >0$.
If $\int |x|\nu (\mathrm{d}x)<\infty$ then we can apply Theorem \ref{Thm:focus_zero} with $R_\varepsilon =\infty$ and we obtain $C_\varepsilon =\varepsilon \int |x|\nu (\mathrm{d}x)$. The corresponding convergence in \eqref{Result_at_zero} follows.
\end{example}

\begin{example}\label{example:rescale}

Let $\nu$ be a measure satisfying \eqref{ass_1} and let $\nu_\varepsilon (A)=\nu (A/\varepsilon)$ for any $\varepsilon >0$.
%\noindent
%(a) \textit{J-perimeter}: Suppose that $\nu (\ud x) = J(x)\, \ud x$ for a non-negative radially symmetric function $J$ such that $\int_{\RR^d}J(x)\, \ud x =1$. Then $\Per_\nu (E) = \Per_J(E)$ and we obtain convergence in \eqref{Per_J_conv} for all sets $E$ of finite measure and of finite perimeter.
We assume that
\begin{align*}
\nu (A) =  \int_{\mathbb{S}^d-1} \int_0^\infty\mathbf{1}_A(r\theta)\varrho(\mathrm{d}r)\eta(\mathrm{d}\theta),
\end{align*}
where $\varrho$ is a positive measure on $(0,\infty)$ and 
$\eta$ is a probability measure on $\mathbb{S}^{d-1}$. 
Then 
\begin{align*}
\lambda_\varepsilon (B) = C_\varepsilon^{-1} 
 \int_{\mathbb{S}^{d-1}}
\int_0^\infty 
 (R_\varepsilon \wedge r)\mathbf{1}_{\varepsilon^{-1}B}(r\theta)
\varrho_\varepsilon(\mathrm{d}r)
\eta(\mathrm{d}\theta),
\end{align*}
where $C_\varepsilon = \int_0^\infty (R_\varepsilon \wedge r)\varrho_{\varepsilon}(\mathrm{d}r)$ and $\varrho_\varepsilon (\mathrm{d}r) = \varrho (\varepsilon \mathrm{d}r)$.
We observe that in this case the measures $\mu_\varepsilon$ of Theorem \ref{Thm:focus_zero} are all equal to $\eta$. In particular, if $\eta$ is rotationally invariant then \eqref{Result_at_zero} becomes
\begin{align*}
\lim_{\varepsilon \downarrow 0} C_{\varepsilon}^{-1}\mathcal{F}_{\nu_{\varepsilon}} (f)
= 
\frac{K_{1,d}}{2\kappa_{d-1}}
\int_{\mathbb{R}^d}|Du|.
\end{align*}
This results in the following approximation of the classical perimeter: for any set $E\subset \RR^d$ of finite Lebesgue measure and such that $\Per (E)<\infty$ it holds %\todo{find $C_\varepsilon$}
\begin{align*}
\lim_{\varepsilon \downarrow 0} C_{\varepsilon}^{-1}\mathrm{Per}_{\nu_{\varepsilon}} (E)
= 
\frac{K_{1,d}}{2\kappa_{d-1}}
\Per(E).
\end{align*}
%(c) \textit{Stable L\'{e}vy measures}. 
%\begin{align*}
%\varrho (\mathrm{d}r)=\alpha r^{-1-\alpha}\mathrm{d}r 
%\end{align*}
%we get... \todo{To be finished + $\mathrm{Per}_\alpha$}
\end{example}

\subsection*{Anisotropic fractional perimeters}
We can utilize Theorem \ref{Thm:focus_zero} also in a more general context of anisotropic fractional perimeters. 
We start by recalling an another (equivalent) definition of the classical perimeter. For a set $E$ such that $\mathbf{1}_E\in \mathrm{BV}(\RR^d)$ one can define its perimeter as
\begin{align}\label{df:Per_outer}
\Per (E)=\int_{\partial ^*E}|\mathsf{n}_E(x)|\mathcal{H}^{d-1}(\ud x),
\end{align}
where $\partial^*E$ is the reduced boundary of $E$, $\mathsf{n}_E(x)$ denotes the measure theoretic outer unit normal vector of $E$ at $x\in \partial^* E$ and $\mathcal{H}^{d-1}$ is the $(d-1)$-dimensional Hausdorff measure. For a detailed treatment on the fine structure of the classical perimeter we refer to \cite[Section 3.5]{Ambrosio_2000} and \cite{Figalli_Inventiones}.

%Anisotropic perimeter related to a given convex body is a natural generalization of the classical perimeter and it is defined via a norm whose unit ball is equal to a given convex body, see \cite{Figalli_Inventiones} and references therein.
% Let $\Vert \cdot \Vert_K$ denote the unique norm on $\RR^d$ with unit ball equal to $K$, that is
%\begin{align*}
%\Vert x\Vert_K = \inf \{ \lambda >0: \lambda^{-1}x\in K\},\quad x\in \RR^d.
%\end{align*}
%Let $K^*=\{y\in \RR^d:\sup_{x\in K} y\cdot x\leq 1\}$ be the polar body of $K$. 
%
Let $K\subset \RR^d$ be a convex compact set of non-empty interior (so-called convex body) and such that it is origin-symmetric.
Anisotropic perimeter of a Borel set $E\subset \RR^d$ with respect to $K$ is defined as
\begin{align*}
\Per (E,K) = \int_{\partial^* E}\Vert \mathsf{n}_{E}(x)\Vert_{K^*}\, \ud x.
\end{align*}
%Similarly as the classical perimeter is linked with Sobolev norm, the anisotropic perimeter is related to anisotropic Sobolev (semi)norms which have been intensively studied, see \cite{Alvino}, \cite{Villani}, \cite{Figalli_Adv}, \cite[Appendix by M.\ Gromov]{Gromov}, \cite{Ludwig_Norm} and \cite{Ma}. 
%It turns out that one can approximate the anisotropic perimeter with its fractional counterparts. For $0<\alpha <1$, the anisotropic $\alpha$-perimeter of $E$ with respect to $K$ is given by
%\begin{align*}
%\Per_{\alpha} (E,K) = \int_E \int_{E^c}\frac{1}{\Vert x-y\Vert ^{d+\alpha }_K}\,\ud x\, \ud y .
%\end{align*}
%An interesting result of \cite{Ludwig_Per} asserts that for any bounded set of finite perimeter the following convergence holds
%\begin{align}\label{anisotropic_alpha_per_coverg}
%\lim_{\alpha \uparrow 1}(1-\alpha) \Per_\alpha (E,K) = \Per (E,ZK),
%\end{align}
%where
Let $ZK$ be the so-called moment body of $K$. It is defined as the unique convex body satisfying 
\begin{align}\label{moment_body}
\Vert y \Vert_{Z^*K} = \frac{d+1}{2}\int_K |y\cdot x|\, \ud x,\quad  y\ \in \RR^d,
\end{align}
where $Z^*K$ is the polar body of $ZK$. Our aim is to show (through the methods of the present paper) that convergence in \eqref{anisotropic_alpha_per_coverg} holds actually for all sets of finite perimeter.  

\begin{proposition}
For any Borel set $E\subset \RR^d$ of finite Lebesgue measure and of finite perimeter it holds
\begin{align*}
\lim_{\alpha \uparrow 1}(1-\alpha) \Per_\alpha (E,K) = \Per (E,ZK).
\end{align*}
\end{proposition}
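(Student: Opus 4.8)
The plan is to realize the anisotropic fractional perimeter as a $\nu_\alpha$-perimeter and then apply Theorem \ref{Thm:focus_zero}. Put
\[
\nu_\alpha(\ud x)=\frac{\ud x}{\|x\|_K^{\,d+\alpha}},\qquad \alpha\in(0,1).
\]
Since $K$ is an origin-symmetric convex body, $\|\cdot\|_K$ is comparable to $|\cdot|$, so $\nu_\alpha$ satisfies \eqref{ass_1}; and the substitution $z=x+y$ together with $\|z\|_K=\|-z\|_K$ gives $\Per_{\nu_\alpha}(E)=\Per_\alpha(E,K)$ for every Borel set $E$. Passing to polar coordinates $x=r\omega$ ($r>0$, $\omega\in\mathbb{S}^{d-1}$, $\ud x=r^{d-1}\ud r\,\sigma(\ud\omega)$) one gets the spectral representation $\nu_\alpha(\ud x)=r^{-1-\alpha}\,\ud r\,\eta_\alpha(\ud\omega)$ with $\eta_\alpha(\ud\omega)=\|\omega\|_K^{-d-\alpha}\,\sigma(\ud\omega)$; thus $\nu_\alpha$ is of the stable form of Proposition \ref{Prop:stable_at_zero}, the only change being that the spherical part $\eta_\alpha$ is not normalized and depends on $\alpha$.

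Next I would run Theorem \ref{Thm:focus_zero} with $R_\alpha\equiv1$. The radial part of $\nu_\alpha$ coincides with the one in Proposition \ref{Prop:stable_at_zero}, so its computations apply verbatim: $C_\alpha=\int(1\wedge|x|)\nu_\alpha(\ud x)=\bigl(\tfrac1\alpha+\tfrac1{1-\alpha}\bigr)\eta_\alpha(\mathbb{S}^{d-1})$, the factor $\eta_\alpha(\mathbb{S}^{d-1})$ cancels in $\lambda_\alpha(B_R^c)$, and one gets $\lim_{\alpha\uparrow1}\lambda_\alpha(B_R^c)=0$ for all $R>0$, i.e.\ \eqref{tight_at_zero}. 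The projected measures are $\mu_\alpha=\eta_\alpha/\eta_\alpha(\mathbb{S}^{d-1})$; since $\omega\mapsto\|\omega\|_K$ is continuous and bounded away from $0$ and $\infty$ on the sphere, $\|\omega\|_K^{-d-\alpha}\to\|\omega\|_K^{-d-1}$ uniformly, whence $\mu_\alpha\xrightarrow[\alpha\uparrow1]{w}\mu$ with $\mu(\ud\theta)=I_K^{-1}\|\theta\|_K^{-d-1}\sigma(\ud\theta)$, where $I_K:=\int_{\mathbb{S}^{d-1}}\|\omega\|_K^{-d-1}\sigma(\ud\omega)$. Because the whole family converges weakly, the remark after Theorem \ref{Thm:Ponce} lets Corollary \ref{cor:per}, applied to $f=\mathbf{1}_E\in\mathrm{BV}(\RR^d)$, be used along the full range $\alpha\uparrow1$. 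Writing $(1-\alpha)\Per_{\nu_\alpha}(E)=[(1-\alpha)C_\alpha]\cdot[C_\alpha^{-1}\Per_{\nu_\alpha}(E)]$ and noting $(1-\alpha)C_\alpha\to I_K$, this yields
\[
\lim_{\alpha\uparrow1}(1-\alpha)\Per_\alpha(E,K)
=\frac{I_K}{2}\int_{\mathbb{S}^{d-1}}\Bigl(\int_{\RR^d}|D\mathbf{1}_E\cdot\theta|\Bigr)\mu(\ud\theta)
=\frac12\int_{\mathbb{S}^{d-1}}\Bigl(\int_{\RR^d}|D\mathbf{1}_E\cdot\theta|\Bigr)\|\theta\|_K^{-d-1}\,\sigma(\ud\theta).
\]

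It remains to identify the right-hand side with $\Per(E,ZK)$. By the structure theorem for sets of finite perimeter (cf.\ \cite[Section~3.5]{Ambrosio_2000}) one has $\int_{\RR^d}|D\mathbf{1}_E\cdot\theta|=\int_{\partial^*E}|\mathsf{n}_E(x)\cdot\theta|\,\mathcal{H}^{d-1}(\ud x)$, so by Tonelli's theorem the last display equals $\int_{\partial^*E}\bigl(\tfrac12\int_{\mathbb{S}^{d-1}}|\mathsf{n}_E(x)\cdot\theta|\,\|\theta\|_K^{-d-1}\sigma(\ud\theta)\bigr)\mathcal{H}^{d-1}(\ud x)$. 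On the other hand, writing $w=s\theta\in K$ in polar coordinates ($0<s<\|\theta\|_K^{-1}$, $\ud w=s^{d-1}\ud s\,\sigma(\ud\theta)$) gives $\frac{d+1}{2}\int_K|y\cdot w|\,\ud w=\frac12\int_{\mathbb{S}^{d-1}}|y\cdot\theta|\,\|\theta\|_K^{-d-1}\sigma(\ud\theta)$, which by \eqref{moment_body} is exactly $\|y\|_{Z^*K}$; taking $y=\mathsf{n}_E(x)$ and integrating over $\partial^*E$ turns the limit into $\int_{\partial^*E}\|\mathsf{n}_E(x)\|_{Z^*K}\,\mathcal{H}^{d-1}(\ud x)=\Per(E,ZK)$. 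I expect the only genuine obstacle to be this final identification — correctly unwinding the definition of the moment body and the polar change of variables so that all constants match; verifying the hypotheses of Theorem \ref{Thm:focus_zero} for $\nu_\alpha$ is routine and essentially already carried out in Proposition \ref{Prop:stable_at_zero}.
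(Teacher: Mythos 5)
Your proof is correct and follows essentially the same route as the paper: realize $\Per_\alpha(\cdot,K)$ as a $\nu_\alpha$-perimeter, apply Theorem \ref{Thm:focus_zero}, note that the whole family $\mu_\alpha$ converges weakly to $I_K^{-1}\|\theta\|_K^{-d-1}\sigma(\ud\theta)$ so the limit holds along $\alpha\uparrow 1$, and then identify the limit with $\Per(E,ZK)$ through the structure theorem for sets of finite perimeter and the polar-coordinates computation giving \eqref{moment_body}. The only difference from the paper is cosmetic bookkeeping of the normalizing constants (you fold $\eta_\alpha(\mathbb{S}^{d-1})$ into $C_\alpha$, whereas the paper separates $C_\alpha$ and $C_\alpha(K)$).
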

\begin{proof}
Let $\nu_{\alpha}(A,K)$ be 
%an $\alpha$-stable L\'{e}vy 
a measure (with respect to the convex body $K$) given by
\begin{align*}
\nu_{\alpha}(A,K) =\int_{\mathbb{S}^{d-1}}\int_0^\infty \mathbf{1}_A(r\theta)r^{-\alpha -1}\ud r\, \frac{\ud \theta}{\Vert \theta \Vert_K^{d+\alpha}}.
\end{align*}
We easily verify that
\begin{align}\label{Our Per is MLudiwig}
\Per_\alpha (E,K) = \Per_{\nu_\alpha (\cdot ,K)}(E).
\end{align} 
Clearly, $\nu_\alpha (\cdot ,K)$ is (up to a normalising constant) a special case of \eqref{alpha_stable_Levy_meas_case_at zero} and thus we are in the scope of Theorem \ref{Thm:focus_zero}. We easily find that the measure $\lambda _\alpha(\cdot ,K)$ appearing in Theorem \ref{Thm:focus_zero} satisfies
\begin{align*}
\lambda _\alpha(B_R^c ,K) = \frac{\int_R^\infty (1\wedge r)r^{-\alpha -1}\ud r}{\int_0^\infty (1\wedge r)r^{-\alpha -1}\ud r}
\end{align*}
and we show similarly as in the proof of Proposition \ref{Prop:stable_at_zero} that $\lambda _\alpha(B_R^c ,K)$
converges to $0$. The corresponding measure $\mu_\alpha (\cdot ,K)$ is given by
\begin{align*}
\mu_\alpha (S,K) =C_\alpha (K)^{-1}\int_{S}\frac{\ud \theta}{\Vert \theta \Vert_K^{d+\alpha}},\qquad S\subset \mathbb{S}^{d-1},
\end{align*}
where $C_\alpha (K) = \int_{\mathbb{S}^{d-1}}\frac{\ud \theta}{\Vert \theta \Vert_K^{d+\alpha}}$.
Since $\lim_{\alpha \uparrow 1}\Vert \theta \Vert_K^{-d-\alpha} = \Vert \theta \Vert_K^{-d-1}$ for every $\theta \in \mathbb{S}^{d-1}$, we infer that
\begin{align*}
\mu_\alpha (\cdot ,K)\xrightarrow[\alpha \uparrow 1]{w}\mu (\cdot ,K),
\end{align*}
where
$\mu (S,K)= C(K)^{-1}\int_{S}\Vert \theta \Vert^{-d-1}_K\ud \theta$ and $C(K)= \int_{\mathbb{S}^{d-1}}\Vert \theta \Vert^{-d-1}_K\ud \theta$. We obtain that for any $f\in \mathrm{BV}(\RR^d)$,
\begin{align}\label{F_nu_conv_Ludwig}
\lim_{\alpha \uparrow 1}C_\alpha^{-1}C_\alpha (K)^{-1}\mathcal{F}_{\nu_{\alpha}(\cdot ,K)}(f)= \frac{1}{2}\int_{\mathbb{S}^{d-1}}\int_{\RR^d} |Df\cdot \theta|\mu (\ud \theta ,K),
\end{align}
where $C_\alpha =\int_0^\infty (1\wedge r)r^{-1-\alpha }\ud r$. Hence, by taking $f=\mathbf{1}_E\in \mathrm{BV}(\RR^d)$, we arrive at
\begin{align*}
\lim_{\alpha \uparrow 1}\, (1-\alpha)\, \Per _{\alpha}(E,K) = 
\frac{1}{2}
\int_{\mathbb{S}^{d-1}}\int_{\RR^d} |D\mathbf{1}_E\cdot \theta|\, \frac{\ud \theta}{\Vert \theta \Vert _K^{d+1}}.
\end{align*}
Further, by employing \eqref{df:Per_outer} together with Fubini's theorem, we obtain
\begin{align*}
\frac{1}{2}\int_{\mathbb{S}^{d-1}}\int_{\RR^d} |D\mathbf{1}_E\cdot \theta|\, \frac{\ud \theta}{\Vert \theta \Vert _K^{d+1}}
=
\frac{1}{2}
\int_{\partial^*E}
\int_{\mathbb{S}^{d-1}} |\mathsf{n}_E(x)\cdot \theta| \frac{\ud \theta}{\Vert \theta \Vert _K^{d+1}}\, \mathcal{H}^{d-1}(\ud x).
\end{align*}
Finally, we have
\begin{align}\label{polar_outer_normal}
\int_K |\mathsf{n}_E(x)\cdot y|\ud y =\int_{\mathbb{S}^{d-1}}\int_0^{\frac{1}{\Vert \theta \Vert_K}}|\mathsf{n}_E(x)\cdot \theta|r^{d}\ud r\, \ud \theta =\frac{1}{d+1}\int_{\mathbb{S}^{d-1}} |\mathsf{n}_E(x)\cdot \theta| \frac{\ud \theta}{\Vert \theta \Vert_K^{d+1}}
\end{align}
which implies
\begin{align*}
\frac{1}{2}\int_{\mathbb{S}^{d-1}}\int_{\RR^d} |D\mathbf{1}_E\cdot \theta|\, \frac{\ud \theta}{\Vert \theta \Vert _K^{d+1}}
=
\int_{\partial^*E}\Vert\mathsf{n}_E(x)\Vert_{Z^*K}\mathcal{H}^{d-1}(\ud x) = \Per (E,ZK)
\end{align*}
and the result follows.
\end{proof}

\subsection*{Anisotropic Sobolev norms}
We finally show that similar methods apply in the context of anisotropic Sobolev norms. We aim to prove a stronger version of \cite[Theorem 8]{Ludwig_Per} as we abandon the assumption of compact support. We recall that for any $f\in \mathrm{BV}(\RR^d)$ we define its anisotropic Sobolev semi-norm (with respect to a given convex body $K$ which is origin-symmetric) as
\begin{align*}
\Vert f\Vert_{\mathrm{BV}, ZK}
=
\int_{\RR^d}\left\Vert \frac{Df}{|Df|}\right\Vert_{Z^*K}\! \ud |Df|.
\end{align*}
Here the vector  $Df/|Df|$ is the Radon-Nikodym derivative of the $\RR^d$-valued vector measure $Df$ with respect to the positive measure $|Df|$.

\begin{proposition}
For any $f\in \mathrm{BV}(\RR^d)$ we have
\begin{align}
\lim_{\alpha \uparrow 1}\, (1-\alpha) \int_{\RR^d}\int_{\RR^d}\frac{|f(x+y)-f(x)|}{\Vert y\Vert_K^{d+\alpha}}\, \ud y\, \ud x
=
2
\Vert f\Vert_{\mathrm{BV}, ZK}.
\end{align} 
\end{proposition}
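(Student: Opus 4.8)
The plan is to recognize the left-hand side, up to a multiplicative constant, as a special case of the functional $\mathcal{F}_{\nu_\alpha(\cdot,K)}$ already analysed in the preceding proposition, and then to unwind the limiting integral on the sphere into the anisotropic semi-norm via the identity \eqref{polar_outer_normal} (or rather its pointwise analogue for a general vector). Concretely, with $\nu_\alpha(\cdot,K)$ the measure
\begin{align*}
\nu_\alpha(A,K) = \int_{\mathbb{S}^{d-1}}\int_0^\infty \mathbf{1}_A(r\theta)\, r^{-\alpha-1}\,\ud r\, \frac{\ud\theta}{\Vert\theta\Vert_K^{d+\alpha}},
\end{align*}
one checks directly from the definitions that
\begin{align*}
\int_{\RR^d}\int_{\RR^d}\frac{|f(x+y)-f(x)|}{\Vert y\Vert_K^{d+\alpha}}\,\ud y\,\ud x = 2\,\mathcal{F}_{\nu_\alpha(\cdot,K)}(f),
\end{align*}
since $\Vert r\theta\Vert_K^{d+\alpha} = r^{d+\alpha}\Vert\theta\Vert_K^{d+\alpha}$ and the radial change of variables $\ud y = r^{d-1}\ud r\,\ud\theta$ produces exactly $r^{-\alpha-1}$. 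I would then invoke \eqref{F_nu_conv_Ludwig} from the proof of the previous proposition, which gives
\begin{align*}
\lim_{\alpha\uparrow 1} C_\alpha^{-1}C_\alpha(K)^{-1}\mathcal{F}_{\nu_\alpha(\cdot,K)}(f) = \frac{1}{2}\int_{\mathbb{S}^{d-1}}\int_{\RR^d}|Df\cdot\theta|\,\mu(\ud\theta,K),
\end{align*}
where $\mu(S,K)=C(K)^{-1}\int_S\Vert\theta\Vert_K^{-d-1}\ud\theta$ with $C(K)=\int_{\mathbb{S}^{d-1}}\Vert\theta\Vert_K^{-d-1}\ud\theta$, and $C_\alpha=\int_0^\infty(1\wedge r)r^{-1-\alpha}\ud r\sim 1/(1-\alpha)$ as $\alpha\uparrow 1$. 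Since $C_\alpha(K)\to C(K)$ by dominated convergence (using $\Vert\theta\Vert_K^{-d-\alpha}\to\Vert\theta\Vert_K^{-d-1}$ on the compact sphere), combining these yields
\begin{align*}
\lim_{\alpha\uparrow 1}(1-\alpha)\int_{\RR^d}\int_{\RR^d}\frac{|f(x+y)-f(x)|}{\Vert y\Vert_K^{d+\alpha}}\,\ud y\,\ud x = \int_{\mathbb{S}^{d-1}}\int_{\RR^d}|Df\cdot\theta|\,\frac{\ud\theta}{\Vert\theta\Vert_K^{d+1}}.
\end{align*}

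It then remains to identify the right-hand side with $2\Vert f\Vert_{\mathrm{BV},ZK}$. Writing $Df = \frac{Df}{|Df|}\,|Df|$ and using Fubini to exchange the $\theta$-integral with the $|Df|$-integral, the expression becomes
\begin{align*}
\int_{\RR^d}\int_{\mathbb{S}^{d-1}}\Bigl|\frac{Df}{|Df|}(x)\cdot\theta\Bigr|\,\frac{\ud\theta}{\Vert\theta\Vert_K^{d+1}}\,\ud|Df|(x).
\end{align*}
For a fixed unit vector $n$, the computation in \eqref{polar_outer_normal} applied with $\mathsf{n}_E(x)$ replaced by $n$ gives $\int_{\mathbb{S}^{d-1}}|n\cdot\theta|\Vert\theta\Vert_K^{-d-1}\ud\theta = (d+1)\int_K|n\cdot y|\,\ud y = 2\Vert n\Vert_{Z^*K}$, by the defining relation \eqref{moment_body} of the moment body. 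Here $\frac{Df}{|Df|}$ has unit Euclidean norm $|Df|$-a.e. (a standard fact about $\mathrm{BV}$ functions), so applying this pointwise gives exactly $2\int_{\RR^d}\Vert\frac{Df}{|Df|}\Vert_{Z^*K}\,\ud|Df| = 2\Vert f\Vert_{\mathrm{BV},ZK}$, which finishes the proof.

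The only genuinely new work beyond the previous proposition is the $\mathrm{BV}$-function version of the final identification: previously \eqref{polar_outer_normal} was used with the measure-theoretic normal $\mathsf{n}_E$ and the Hausdorff measure on $\partial^*E$, and one must now check that the same algebraic identity, together with $|\frac{Df}{|Df|}|=1$ a.e.\ and Fubini's theorem for the vector measure $Df$, extends it to an arbitrary $f\in\mathrm{BV}(\RR^d)$. I expect this to be the main (though modest) obstacle — it is essentially bookkeeping with Radon–Nikodym derivatives of vector measures, and one should be careful that the exchange of integrals is legitimate, which it is because the integrand is nonnegative. Everything else is a direct citation of \eqref{F_nu_conv_Ludwig} and the asymptotics of $C_\alpha$ and $C_\alpha(K)$ already recorded above.
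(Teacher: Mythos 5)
Your proposal is correct and follows essentially the same route as the paper: it cites \eqref{F_nu_conv_Ludwig}, reassembles the constants $C_\alpha \sim (1-\alpha)^{-1}$ and $C_\alpha(K)\to C(K)$, uses the polar decomposition $\ud|Df\cdot\theta| = \bigl|\tfrac{Df}{|Df|}\cdot\theta\bigr|\,\ud|Df|$ (the paper invokes \cite[Proposition 1.23]{Ambrosio_2000} for this), and identifies the inner spherical integral as $2\Vert\cdot\Vert_{Z^*K}$ via the computation of \eqref{polar_outer_normal}. The only cosmetic difference is that you spell out the normalization bookkeeping and the unit-vector specialization explicitly, whereas the paper compresses these steps.
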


\begin{proof}
We observe that by \eqref{F_nu_conv_Ludwig}
\begin{align*}
\lim_{\alpha \uparrow 1}\, (1-\alpha) \int_{\RR^d}\int_{\RR^d}\frac{|f(x+y)-f(x)|}{\Vert y\Vert_K^{d+\alpha}}\, \ud y\, \ud x
= 
\int_{\mathbb{S}^{d-1}}\int_{\RR^d} |Df \cdot \theta|\, \frac{\ud \theta}{\Vert \theta \Vert _K^{d+1}}.
\end{align*}
We are thus left to identify the limit. We first note that, in view of \cite[Proposition 1.23]{Ambrosio_2000}, 
\begin{align}\label{Radon-Nikodym}
\ud |Df\cdot \theta |  = \left \vert \frac{Df}{|Df|}\cdot \theta\right\vert \ud |Df|.
\end{align}
This implies
\begin{align*}
\int_{\mathbb{S}^{d-1}}\int_{\RR^d} |Df \cdot \theta|\, \frac{\ud \theta}{\Vert \theta \Vert _K^{d+1}}
&=
\int_{\mathbb{S}^{d-1}}\int_{\RR^d} \left\vert \frac{Df}{|Df|} \cdot \theta\right\vert \, \ud |Df|\, \frac{\ud \theta}{\Vert \theta \Vert _K^{d+1}}\\
&=
\int_{\RR^d} \int_{\mathbb{S}^{d-1}} \left\vert \frac{Df}{|Df|} \cdot \theta\right\vert \, \frac{\ud \theta}{\Vert \theta \Vert _K^{d+1}}\, \ud |Df|.
\end{align*}
We then proceed similarly as in \eqref{polar_outer_normal} to obtain
\begin{align*}
\int_{\mathbb{S}^{d-1}} \left\vert \frac{Df}{|Df|} \cdot \theta\right\vert \, \frac{\ud \theta}{\Vert \theta \Vert _K^{d+1}} = (d+1)\int_K \left\vert \frac{Df}{|Df|} \cdot y\right\vert\, \ud y = 2 \left\Vert \frac{Df}{|Df|}\right\Vert_{Z^*K} 
\end{align*}
and the result follows.
\end{proof}

\subsection{Convergence towards the Lebesgue measure}

In this paragraph we focus on convergence of non-local perimeters towards the Lebesgue measure in the case when the mass of the underlying L\'{e}vy measure concentrates at infinity. In the rest of the paper we make use of a covariance function of a set and thus we briefly recall its definition and basic properties.

For any $E \subset\RR^d$ of finite Lebesgue measure its covariance function $g_E$ is given by
\begin{align}\label{g_omega_defn}
g_E (y)=|E \cap (E + y)|=\int_{\RR ^d}\,\mathbf{1}_{E}(x)\,\mathbf{1}_E (x-y) \ud x,\quad y\in \RR^d.
\end{align}
It is a symmetric and uniformly continuous function tending to zero at infinity. If $E$ is of finite perimeter then it is Lipschitz continuous. For more details we refer to \cite{Galerne}. According to \cite[Lemma 1]{Heat_Content_1}, if $E$ is of finite Lebesgue measure and of finite perimeter then $\Per_\nu (E)<\infty$ for any measure $\nu$ satisfying \eqref{ass_1}.

We first aim to prove the following result. 
\begin{theorem}\label{Thm:Per-Approx_Lebesgue}
Let $\{\nu_\varepsilon\}_{\varepsilon >0}$ be a family of measures satisfying \eqref{ass_1} and let 
\begin{align*}
\lambda_\varepsilon (\mathrm{d}x) = C_\varepsilon^{-1}\left(1 \wedge R_\varepsilon |x|\right)\nu_\varepsilon (\mathrm{d}x)
\end{align*}
where $C_\varepsilon = \int_{\mathbb{R}^d}\left( 1 \wedge R_\varepsilon |x|\right)\nu_\varepsilon (\mathrm{d}x)$ and $R_\varepsilon \in [1,\infty]$.
We assume that, for any $R>0$,
\begin{align}\label{Lambda_cond_infty}
\lim_{\varepsilon \downarrow 0}
\lambda_\varepsilon (B_R^c)
=
1.
\end{align}
%that $\lambda_\varepsilon \xrightarrow{w} \delta_\infty$, i.e. $\lambda_\varepsilon (B_R^c) \xrightarrow[\varepsilon\downarrow 0]{} 1$ for any $R>0$.
Then for any set $E\subset \RR^d$ of finite Lebesgue measure and of finite perimeter it holds
\begin{align}\label{per_to_Leb}
\lim_{\varepsilon \downarrow 0} C_{\varepsilon}^{-1}\mathrm{Per}_{\nu_{\varepsilon}}(E) = |E|.
\end{align}
Furthermore, if $\{\nu_\varepsilon\}_{\varepsilon >0}$ is a family of  finite measures then for any set $E$ of finite Lebesgue measure it holds
\begin{align*}
\lim_{\varepsilon \downarrow 0}
(\nu_\varepsilon (\RR^d))^{-1}
\mathrm{Per}_{\nu_{\varepsilon}}(E) = |E|.
\end{align*}
\end{theorem}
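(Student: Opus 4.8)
The plan is to reduce the statement to the representation of the non-local perimeter through the covariance function and then estimate the resulting integral separately near the origin and near infinity. First I would observe, exactly as in the proof of Lemma~\ref{Lemma1}, that Fubini's theorem yields
\[
\mathrm{Per}_{\nu_\varepsilon}(E)=\int_{\RR^d}|E\cap(E-y)^c|\,\nu_\varepsilon(\ud y)=\int_{\RR^d}\bigl(|E|-g_E(y)\bigr)\,\nu_\varepsilon(\ud y),
\]
where we used $|E\cap(E-y)^c|=|E|-|E\cap(E-y)|=|E|-g_E(y)$ together with the symmetry of $g_E$; here $0\le|E|-g_E(y)\le|E|$, $g_E(0)=|E|$, and finiteness of $\mathrm{Per}_{\nu_\varepsilon}(E)$ is guaranteed by \cite[Lemma 1]{Heat_Content_1}. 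Since $\nu_\varepsilon(\{0\})=0$ and the integrand vanishes at $y=0$, I may multiply and divide the integrand by $1\wedge R_\varepsilon|y|$, which turns \eqref{per_to_Leb} into the assertion that $\int_{\RR^d}\frac{|E|-g_E(y)}{1\wedge R_\varepsilon|y|}\,\lambda_\varepsilon(\ud y)\to|E|$ as $\varepsilon\downarrow 0$. I would then split this integral over $B_R$ and $B_R^c$ for a suitably large fixed $R\ge 1$, using that each $\lambda_\varepsilon$ is a probability measure and that by \eqref{Lambda_cond_infty} we have $\lambda_\varepsilon(B_R^c)\to 1$, hence $\lambda_\varepsilon(B_R)\to 0$.

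Two estimates then do the job. Over $B_R^c$: since $R\ge 1$ and $R_\varepsilon\ge 1$ we get $1\wedge R_\varepsilon|y|=1$ there, so the integrand equals $|E|-g_E(y)$, which — because $g_E$ vanishes at infinity — stays within any prescribed $\delta>0$ of $|E|$ once $R$ is large enough; thus this piece differs from $|E|\lambda_\varepsilon(B_R^c)$ by at most $\delta$, and $|E|\lambda_\varepsilon(B_R^c)\to|E|$. Over $B_R$: I need a bound on the integrand uniform in $y$ and $\varepsilon$. As $E$ has finite perimeter, $g_E$ is Lipschitz, say with constant $L$, so $|E|-g_E(y)=g_E(0)-g_E(y)\le L|y|$; combined with $|E|-g_E(y)\le|E|$ this gives $|E|-g_E(y)\le\max(L,|E|)\,(1\wedge|y|)$, while $R_\varepsilon\ge 1$ forces $1\wedge R_\varepsilon|y|\ge 1\wedge|y|$, so the integrand is bounded by $\max(L,|E|)$ everywhere; hence the $B_R$-piece is at most $\max(L,|E|)\,\lambda_\varepsilon(B_R)\to 0$. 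Letting first $\varepsilon\downarrow 0$ and then $\delta\downarrow 0$ yields \eqref{per_to_Leb}.

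For the last assertion I would apply the theorem with $R_\varepsilon=\infty$: then $1\wedge R_\varepsilon|y|=1$ for $y\neq 0$, so $C_\varepsilon=\nu_\varepsilon(\RR^d)$, the measure $\lambda_\varepsilon$ equals $\nu_\varepsilon(\RR^d)^{-1}\nu_\varepsilon$, and \eqref{Lambda_cond_infty} reads $\nu_\varepsilon(\RR^d)^{-1}\nu_\varepsilon(B_R^c)\to 1$. The argument above then applies with $E$ merely of finite Lebesgue measure, because the only place the finite-perimeter hypothesis was used is the uniform bound over $B_R$, and for $R_\varepsilon=\infty$ that bound is simply $|E|-g_E(y)\le|E|$, so no Lipschitz property of $g_E$ is needed.

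I expect the one genuinely delicate point to be the small-$|y|$ contribution: the measures $\lambda_\varepsilon$ concentrate near infinity, whereas the weight $1/(1\wedge R_\varepsilon|y|)$ is largest near the origin, so a crude estimate would fail. The point that makes it work is the matching between the lower bound $1\wedge R_\varepsilon|y|\ge 1\wedge|y|$ (coming from $R_\varepsilon\ge 1$) and the linear decay $|E|-g_E(y)\lesssim|y|$ of the covariance function at the origin, which keeps the ratio bounded; everything else is a routine splitting argument.
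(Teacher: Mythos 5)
Your argument is correct and is essentially the paper's own proof: both represent $\Per_{\nu_\varepsilon}(E)$ through the covariance function $g_E$, split the integral into $B_R$ and $B_R^c$, use $g_E\in C_0(\RR^d)$ on $B_R^c$ and the Lipschitz bound $g_E(0)-g_E(y)\le L|y|$ (respectively boundedness of $g_E$ when $R_\varepsilon=\infty$) on $B_R$, and close the estimate by noting $1\wedge R_\varepsilon|y|\ge 1\wedge|y|$ since $R_\varepsilon\ge 1$. The only cosmetic difference is that you absorb the normalization into the integrand and work with $\lambda_\varepsilon$ throughout, whereas the paper keeps $\nu_\varepsilon$ and divides by $C_\varepsilon$ at the end.
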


\begin{proof}
%As $E$ is bounded, there is $R>0$ such that $|E\cap (E^c-y)|=|E|$, for $|y|>R$. We then have
%\begin{align*}
%\Per_{\nu_{\varepsilon_j}}(E) = |E| \nu_{\varepsilon_j}(B_{R}^c) + \int_{B_R}|E\cap (E^c-y)|\nu_{\varepsilon_j}(\mathrm{d}y).
%\end{align*}
We have 
\begin{align*}
\Per_{\nu_{\varepsilon}}(E) 
&=
\int_{\RR^d}|E\cap (E^c-y)|\nu_{\varepsilon}(\mathrm{d}y)
=
\int_{\RR^d} (g_E(0)-g_E(y))
\nu_{\varepsilon}(\mathrm{d}y) \\
&= |E|  \nu_{\varepsilon}(B_{R}^c) 
- \int_{B_R^c}g_E(y) \nu_{\varepsilon}(\mathrm{d}y)
+
\int_{B_R} (g_E(0)-g_E(y))
\nu_{\varepsilon}(\mathrm{d}y) .
\end{align*}
Since $R_\varepsilon \geq 1$, for any  $R>1$ we have $C_{\varepsilon}^{-1}\nu_{\varepsilon}(B_{R}^c) = \lambda_{\varepsilon}(B_R^c)$ which tends to one as $\varepsilon$ goes to zero.  
If $|E|<\infty$ then $g_E\in C_0(\RR^d)$\footnote{By $C_0(\RR^d)$ we denote the space of all continuous functions that vanish at infinity.}. Thus we can choose $R$ big enough so that $g_E(y)$ is smaller than any given $\epsilon>0$ for $|y|>R$. We are left with the last integral in the formula above. We have 
\begin{align}\label{g_bound}
C_{\varepsilon}^{-1} \int_{B_R} (g_E(0)-g_E(y))
\nu_{\varepsilon}(\mathrm{d}y)
&\leq 
C C_{\varepsilon}^{-1}\int_{B_R}(1\wedge |y|)\nu_{\varepsilon}(\mathrm{d}y)
\leq C\lambda_{\varepsilon}(B_R)\xrightarrow[\varepsilon \downarrow 0]{} 0,
\end{align}
where we used the fact that $g_E$ is Lipschitz continuous if $\Per(E)<\infty$. If $\nu_\varepsilon$ is a finite measure then we simply choose $R_\varepsilon =\infty$ and repeat the same reasoning as above. In \eqref{g_bound} we use the fact that $g_E$ is bounded.
\end{proof}

We present an analogous result for stable L\'{e}vy measures.

\begin{proposition}\label{Prop:stable}
Let $\nu_\alpha$ be the $\alpha$-stable L\'{e}vy measure given by
\begin{align}\label{alpha_stable_Levy_meas}
\nu_\alpha (A) =\alpha \int_{\mathbb{S}^{d-1}}\int_{0}^\infty \mathbf{1}_A(\varrho \theta) \varrho^{-1-\alpha}\mathrm{d}\varrho \, \eta (\mathrm{d}\theta),\quad \alpha \in (0,1),
\end{align}
where $\eta $ is a probability measure on $\mathbb{S}^{d-1}$.
For any set $E\subset \RR^d$ such that $|E|<\infty$ and $\Per_{\nu_{\alpha_0}}(E)<\infty$, for some $\alpha_0\in (0,1) $ it holds
\begin{align}\label{conv_stable_prop}
\lim_{\alpha \downarrow 0}\Per_{\nu_{\alpha}}(E) = |E|.
\end{align}
\end{proposition}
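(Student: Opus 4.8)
The plan is to mimic the proof of Theorem~\ref{Thm:Per-Approx_Lebesgue}, using the covariance-function representation of the non-local perimeter, but to replace the Lipschitz estimate \eqref{g_bound} --- which is not available here, since we do not assume $\Per(E)<\infty$ --- by a domination of $\nu_\alpha$ near the origin by $\nu_{\alpha_0}$. Since only the limit $\alpha\downarrow 0$ matters, we may assume throughout that $0<\alpha\le\alpha_0$; along the way this will also show $\Per_{\nu_\alpha}(E)<\infty$ for all such $\alpha$, so that the statement is meaningful.

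First I would write, exactly as in the proof of Theorem~\ref{Thm:Per-Approx_Lebesgue},
\[
\Per_{\nu_\alpha}(E)=\int_{\RR^d}\bigl(g_E(0)-g_E(y)\bigr)\,\nu_\alpha(\ud y),
\]
noting that $g_E(0)-g_E(y)=|E|-|E\cap(E+y)|\ge 0$ and that in polar coordinates $\nu_\alpha(\ud y)=\alpha\,\varrho^{-1-\alpha}\,\ud\varrho\,\eta(\ud\theta)$ with $\varrho=|y|$. Then I would split the integral over $B_1$, $B_R\setminus B_1$ and $B_R^c$, where $R>1$ is fixed at the end. On the annulus $B_R\setminus B_1$ one uses only $0\le g_E(0)-g_E(y)\le|E|$ together with $\nu_\alpha(B_R\setminus B_1)=\alpha\int_1^R\varrho^{-1-\alpha}\,\ud\varrho=1-R^{-\alpha}\to 0$ as $\alpha\downarrow 0$. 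On $B_R^c$ one uses $g_E\in C_0(\RR^d)$ (a consequence of $|E|<\infty$): given $\epsilon>0$ fix $R>1$ with $g_E(y)<\epsilon$ on $B_R^c$, and write
\[
\int_{B_R^c}\bigl(g_E(0)-g_E(y)\bigr)\,\nu_\alpha(\ud y)=|E|\,\nu_\alpha(B_R^c)-\int_{B_R^c}g_E(y)\,\nu_\alpha(\ud y),
\]
where $\nu_\alpha(B_R^c)=\alpha\int_R^\infty\varrho^{-1-\alpha}\,\ud\varrho=R^{-\alpha}\to 1$ and the last integral lies in $[0,\epsilon\,\nu_\alpha(B_R^c)]$; thus this region contributes $|E|+O(\epsilon)$ in the limit.

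The decisive step is the integral over $B_1$. Here I would observe that for $0<\alpha\le\alpha_0$ and $0<\varrho\le 1$ one has $\varrho^{\alpha_0-\alpha}\le 1$, hence $\alpha\,\varrho^{-1-\alpha}\le\frac{\alpha}{\alpha_0}\,\alpha_0\,\varrho^{-1-\alpha_0}$; that is, on $B_1$ the measure $\nu_\alpha$ is dominated by $\tfrac{\alpha}{\alpha_0}\,\nu_{\alpha_0}$. Since the integrand is nonnegative, this gives
\[
\int_{B_1}\bigl(g_E(0)-g_E(y)\bigr)\,\nu_\alpha(\ud y)\;\le\;\frac{\alpha}{\alpha_0}\int_{\RR^d}\bigl(g_E(0)-g_E(y)\bigr)\,\nu_{\alpha_0}(\ud y)=\frac{\alpha}{\alpha_0}\,\Per_{\nu_{\alpha_0}}(E)\xrightarrow[\alpha\downarrow 0]{}0,
\]
which is precisely where the hypothesis $\Per_{\nu_{\alpha_0}}(E)<\infty$ enters (and which shows the $B_1$-piece is finite; the other two pieces are trivially finite). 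Collecting the three estimates yields $\limsup_{\alpha\downarrow 0}\bigl|\Per_{\nu_\alpha}(E)-|E|\bigr|\le\epsilon$, and letting $\epsilon\downarrow 0$ gives \eqref{conv_stable_prop}. I expect the $B_1$-estimate to be the only real obstacle: it is the analogue of the small-jump control that in Theorem~\ref{Thm:Per-Approx_Lebesgue} relied on the Lipschitz continuity of $g_E$, and the idea is to trade that regularity for the weaker $\nu_{\alpha_0}$-integrability hypothesis. (When $\Per(E)<\infty$ the statement is immediate from Theorem~\ref{Thm:Per-Approx_Lebesgue}, since there $C_\alpha=\alpha\bigl(\tfrac1{1-\alpha}+\tfrac1\alpha\bigr)\to1$.)
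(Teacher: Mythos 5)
Your proof is correct and follows essentially the same route as the paper: both start from the covariance-function representation $\Per_{\nu_\alpha}(E)=\int(g_E(0)-g_E(y))\,\nu_\alpha(\ud y)$, dispose of the far-range contribution via $g_E\in C_0(\RR^d)$ with $\nu_\alpha(B_R^c)=R^{-\alpha}\to 1$, and control the near-origin part by dominating $\nu_\alpha$ by a multiple of $\nu_{\alpha_0}$ so that the hypothesis $\Per_{\nu_{\alpha_0}}(E)<\infty$ makes the term $O(\alpha)$. The only cosmetic difference is that the paper uses the domination $\varrho^{-1-\alpha}\le R^{\alpha_0}\varrho^{-1-\alpha_0}$ on all of $B_R$ and invokes dominated convergence to show $\alpha^{-1}\int_{B_R}(g_E(0)-g_E(y))\nu_\alpha(\ud y)$ has a finite limit, whereas you restrict the estimate to $B_1$ (getting the slightly cleaner bound $\nu_\alpha\le\frac{\alpha}{\alpha_0}\nu_{\alpha_0}$ there) and handle the annulus $B_R\setminus B_1$ separately via $\nu_\alpha(B_R\setminus B_1)=1-R^{-\alpha}\to 0$; both yield the same conclusion by the same mechanism.
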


\begin{proof}
Representation \eqref{alpha_stable_Levy_meas} yields $\nu_\alpha (B_R^c)=R^{-\alpha}$, for any $R>0$. As in the proof of Theorem \ref{Thm:Per-Approx_Lebesgue} we have
\begin{align}\label{Per_alpha_decom}
\Per_{\nu_{\alpha}}(E)  &= |E|R^{-\alpha} - \int_{B_R^c}g_E(y) \nu_{\alpha}(\mathrm{d}y)
+
\int_{B_R} (g_E(0)-g_E(y))
\nu_{\alpha}(\mathrm{d}y) 
\end{align}
and the first integral is negligible as $g_E\in C_0(\RR^d)$. To estimate the second integral we observe that
\begin{align*}
\alpha^{-1} \int_{B_R} (g_E(0)-g_E(y))
\nu_{\alpha}(\mathrm{d}y) 
&=
\int_{\mathbb{S}^{d-1}}\int_0^R (g_E(0)-g_E(\varrho \theta))\varrho^{-1-\alpha} \mathrm{d}\varrho \, \eta (\mathrm{d}\theta).
\end{align*}
Since for $ 0<\varrho \leq R$ and $0\leq \alpha <\alpha_0$,
\begin{align}\label{rho_ineq}
\varrho ^{-1-\alpha}\leq R^{\alpha_0}\varrho^{-1-\alpha_0} ,
\end{align}
we can apply the dominated convergence theorem in the last equation and this implies
\begin{align*}
 \lim_{\alpha \to 0} 
 \alpha^{-1} \int_{B_R} (g_E(0)-g_E(y))
\nu_{\alpha}(\mathrm{d}y) 
= \int_{\mathbb{S}^{d-1}}\int_0^R (g_E(0)-g_E(\varrho \theta))\varrho^{-1} \mathrm{d}\varrho \, \eta (\mathrm{d}\theta).
\end{align*}
The last expression is finite in view of \eqref{rho_ineq} used for $\alpha=0$ and combined with the assumption that the perimeter $\Per_{\nu_{\alpha_0}}(E)$ is finite. 
\end{proof}

\begin{remark}
We could prove convergence in \eqref{conv_stable_prop} also in the case when the spherical part $\eta$ of the measure $\nu_\alpha$ given in \eqref{alpha_stable_Levy_meas} depends on the parameter $\alpha$ and converges weakly towards some measure on the unit sphere, see e.g.\ \eqref{stable_Ludiwg}.
\end{remark}
Let the measure $\nu_\alpha$ be rotationally invariant and given by \eqref{rot_inv_stable_lev_meas}.
The following result provides an enhancement of \cite[Corollary 2.6]{Figalli} (see also \cite{Mazya}) for the classical $\alpha$-perimeter in the sense that we abandon the assumption of boundedness.

\begin{corollary}[Convergence of $\alpha$-perimeters as $\alpha \downarrow 0$]
Let 
$E\subset \RR^d$ be of finite Lebesgue measure and such that 
$\Per_{\nu_{\alpha_0}}(E)<\infty$, for some $\alpha_0\in (0,1) $. Then
\begin{align*}
\lim_{\alpha \downarrow 0}\alpha\Per_{\alpha} = \kappa_{d-1}|E|.
\end{align*}
\end{corollary}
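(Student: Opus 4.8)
The plan is to deduce this corollary directly from Proposition \ref{Prop:stable} by a simple change of normalisation. Recall that the rotationally invariant measure in \eqref{rot_inv_stable_lev_meas} is
\begin{align*}
\nu_\alpha (\mathrm{d}x) = \frac{\alpha\, \mathrm{d}x}{\kappa_{d-1}|x|^{d+\alpha}},
\end{align*}
so expressing it in polar coordinates gives exactly the form \eqref{alpha_stable_Levy_meas} with $\eta$ equal to the normalised surface measure on $\mathbb{S}^{d-1}$ (divided by $\kappa_{d-1}$, the surface area), up to the factor $\kappa_{d-1}$ already incorporated in the denominator. Consequently $\Per_{\nu_\alpha}(E)$ coincides with $\alpha \kappa_{d-1}^{-1}\Per_\alpha(E)$, i.e.\ it is the object treated in Proposition \ref{Prop:stable}.

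First I would verify the hypotheses of Proposition \ref{Prop:stable}: the set $E$ has finite Lebesgue measure by assumption, and the condition $\Per_{\nu_{\alpha_0}}(E)<\infty$ for some $\alpha_0\in(0,1)$ is exactly what we are given (and is in any case guaranteed by \cite[Lemma 1]{Heat_Content_1} whenever $\Per(E)<\infty$, by the remark following \eqref{g_omega_defn}). Then Proposition \ref{Prop:stable} yields
\begin{align*}
\lim_{\alpha \downarrow 0}\Per_{\nu_{\alpha}}(E) = |E|.
\end{align*}

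Next I would relate $\Per_{\nu_\alpha}(E)$ to the classical $\alpha$-perimeter $\Per_\alpha(E)$ of \eqref{Per_alpha}. Writing $\nu_\alpha(\mathrm{d}x)=\kappa_{d-1}^{-1}\alpha|x|^{-d-\alpha}\mathrm{d}x$ and using the definition of $\Per_{\nu_\alpha}$ together with Fubini's theorem, one gets $\Per_{\nu_\alpha}(E)=\alpha\kappa_{d-1}^{-1}\Per_\alpha(E)$, the same identity as in \eqref{Our_Per_to_Alpha}. Substituting this into the limit above and multiplying through by $\kappa_{d-1}$ gives
\begin{align*}
\lim_{\alpha \downarrow 0}\alpha\,\Per_\alpha(E) = \kappa_{d-1}|E|,
\end{align*}
which is the assertion. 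The only genuine content is Proposition \ref{Prop:stable}, which has already been proved; the present corollary requires nothing beyond bookkeeping with the normalising constant $\kappa_{d-1}$ and the elementary polar-coordinate identity \eqref{Our_Per_to_Alpha}, so there is no real obstacle — one merely has to be careful that the statement is stated with $\Per_\alpha$ (hence the extra factor $\kappa_{d-1}$) rather than with $\Per_{\nu_\alpha}$.
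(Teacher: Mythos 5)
Your proposal is correct and follows precisely the same route as the paper: the paper's own proof is the one-line statement ``This follows directly from Proposition \ref{Prop:stable} and equation \eqref{Our_Per_to_Alpha}.'' Your unpacking — identify \eqref{rot_inv_stable_lev_meas} as the special case of \eqref{alpha_stable_Levy_meas} with $\eta$ the normalised surface measure, invoke Proposition \ref{Prop:stable} to get $\lim_{\alpha\downarrow 0}\Per_{\nu_\alpha}(E)=|E|$, and convert via $\Per_{\nu_\alpha}(E)=\alpha\kappa_{d-1}^{-1}\Per_\alpha(E)$ — is exactly the intended argument.
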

\begin{proof}
This follows directly from Proposition \ref{Prop:stable} and equation \eqref{Our_Per_to_Alpha}.
\end{proof}

In the following example we show that the assumption of finite perimeter in Theorem \ref{Thm:Per-Approx_Lebesgue} cannot be weakened.

\begin{example}

We first present an example of a set $E\subset \RR$ of finite Lebesgue measure and such that its classical perimeter is infinite whereas  its $\alpha$-perimeter is finite for each $\alpha\in (0,1)$. We consider the one-dimensional case for simplicity's sake.
 Let
\begin{align}\label{set_E}
E=\bigcup_{n=1}^\infty \, [n,n+2^{-n}].
\end{align}
Clearly $|E|=1$ and $\Per (E)=\infty$. In order to compute $\Per_{\nu_\alpha}(E)$ we apply formula \eqref{Per_alpha_decom} which leads to
\begin{align}\label{local_Per_alpha_formula}
\Per_{\nu_{\alpha}}(E)  &= |E|- \int_{|y|>1}g_E(y) \nu_{\alpha}(\mathrm{d}y)
+
\int_{|y|\leq 1} (g_E(0)-g_E(y))
\nu_{\alpha}(\mathrm{d}y).
\end{align}
The first integral is clearly finite so it suffices to handle the last integral. We easily show that for $n\in \mathbb{N}$
\begin{align*}
 (g_E(0)-g_E(y))= (n-1)y +2^{-n+1},\quad \mathrm{for}\ y\in (2^{-n},2^{-n+1}).
\end{align*}
This implies
\begin{align*}
\int_{0}^1 (g_E(0)-g_E(y))
\nu_{\alpha}(\mathrm{d}y) &= 
\alpha\sum_{n=1}^\infty \left((n-1)\int_{2^{-n}}^{2^{-n+1}}\!\!\! y^{-\alpha}\mathrm{d}y
+
2^{-n+1}\int_{2^{-n}}^{2^{-n+1}} \!\!\!\! y^{-\alpha-1}\mathrm{d}y\right)\\
&=
\frac{\alpha}{1-\alpha}\frac{1}{1-2^{\alpha -1}}+ \frac{2^\alpha (1-2^{-\alpha})}{1-2^{\alpha -1}}\\
&\sim \frac{1}{(1-\alpha)^2} + \frac{1}{1-\alpha},\quad \alpha \uparrow 1.
\end{align*}
In particular, we infer that $\Per_{\nu_\alpha}(E)<\infty$ and $\lim_{\alpha \uparrow 1}\Per_{\nu_\alpha}(E)=\infty$.

Further, we consider a family of stable L\'{e}vy measures given by
\begin{align*}
\nu_n(\mathrm{d}x) = \nu_{\frac{1}{n}}(\mathrm{d}x)+ c_n\nu_{1-\frac{1}{n}}(\mathrm{d}x),\quad n\in \mathbb{N},
\end{align*}
where $\nu_{\frac{1}{n}}$ (resp.\ $\nu_{1-\frac{1}{n}}$) is the $\frac{1}{n}$-stable (resp.\ $(1-\frac{1}{n})$-stable) L\'{e}vy measure defined at \eqref{alpha_stable_Levy_meas} and $c_n$ is a sequence of positive numbers to be specified later. Using \eqref{local_Per_alpha_formula} and the above calculation we obtain for set $E$ given in \eqref{set_E} 
\begin{align*}
\Per_{\nu_n}(E)\sim \frac{1}{(1-\frac{1}{n})^2}+c_nn^{2}\sim c_nn^2,\quad n\to \infty.
\end{align*}
If $c_n\sim n^{-1}$ then $\lim_{n\to \infty}\Per_{\nu_n}(E)=\infty$ whereas for $c_n\sim n^{-2}$, $\lim_{n\to \infty}\Per_{\nu_n}(E)<\infty$.
Moreover, if we choose $c_n=o(n)$ then the corresponding sequence of measures 
\begin{align*}
\lambda_n (\mathrm{d}x) = C_n^{-1}(1\wedge |x|)\nu_n(\mathrm{d}x),
\end{align*}
with $C_n=\int (1\wedge |x|)\nu_n(\mathrm{d}x)$, satisfies condition $\lambda_n\xrightarrow[]{w}\delta_\infty$ of Theorem \ref{Thm:Per-Approx_Lebesgue}. Indeed, we easily find that
\begin{align*}
\int_{|x|>R}\nu_n(\mathrm{d}x) =
\begin{cases}
R^{-1/n}+c_nR^{1/n-1},\ R>1;\\
\frac{1/n}{1-\frac{1}{n}}(1-R^{-1/n+1})+ \frac{1-\frac{1}{n}}{1/n}(1-R^{1/n})+c_n+1,\ R<1
\end{cases}
\end{align*}
and
\begin{align*}
C_n = \frac{1/n}{1-\frac{1}{n}}+c_n\frac{1-\frac{1}{n}}{1/n}+c_n+1\to 1,\quad n\to \infty.
\end{align*}
It follows that under condition $c_nn\to 0$,
\begin{align*}
\lim_{n\to \infty} \lambda_n(B_R^c)=1,\quad R>0,
\end{align*}
which yields $\lim_{n\to \infty}\Per_{\nu_n}(E)=1$. 
\end{example}

As a further application of Theorem \ref{Thm:Per-Approx_Lebesgue} we obtain asymptotics for the perimeter given through rescaled measures under the assumption that the tail of the original measure is slowly varying at zero.

\begin{proposition}\label{prop:rescale_Leb}
Let $\nu$ be a given measure and let $\nu_\varepsilon (A) = \nu(\varepsilon A)$ for any $\varepsilon >0$. Assume that $\nu_\varepsilon$ satisfies \eqref{ass_1} for all $\varepsilon >0$. 
Suppose that the function $\ell(s) = \int_{|x|>s}\nu (\ud x)$ is slowly varying at zero. Then, for any set $E\subset \RR^d$ of finite Lebesgue measure and of finite perimeter it holds
\begin{align*}
\lim_{\varepsilon \downarrow 0} C_{\varepsilon}^{-1}\mathrm{Per}_{\nu_{\varepsilon}}(E) = |E|,
\end{align*}
where $C_\varepsilon = \int_{\RR^d}(1\wedge |x|)\nu_{\varepsilon}(\ud x)$.
\end{proposition}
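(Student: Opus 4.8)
The plan is to deduce the statement directly from Theorem~\ref{Thm:Per-Approx_Lebesgue}, applied with the constant choice $R_\varepsilon\equiv 1$. With this choice the auxiliary measure of that theorem is $\lambda_\varepsilon(\ud x)=C_\varepsilon^{-1}(1\wedge|x|)\nu_\varepsilon(\ud x)$ and its normalising constant $\int_{\RR^d}(1\wedge|x|)\nu_\varepsilon(\ud x)$ coincides with the constant $C_\varepsilon$ in the proposition; moreover $\{\nu_\varepsilon\}$ satisfies \eqref{ass_1} by hypothesis. Hence the only thing to check is condition \eqref{Lambda_cond_infty}, namely $\lambda_\varepsilon(B_R^c)\to 1$ for every $R>0$.

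First I would record the effect of the scaling $\nu_\varepsilon(A)=\nu(\varepsilon A)$: for every non-negative Borel function $\varphi$ one has $\int\varphi\,\ud\nu_\varepsilon=\int\varphi(x/\varepsilon)\,\nu(\ud x)$. In particular $\varepsilon C_\varepsilon=\int(\varepsilon\wedge|x|)\,\nu(\ud x)$, and by Tonelli's theorem, writing $\varepsilon\wedge|x|=\int_0^\infty\mathbf{1}_{\{t<\varepsilon\}}\mathbf{1}_{\{t<|x|\}}\,\ud t$, this equals $\int_0^\varepsilon\ell(t)\,\ud t$. Thus $C_\varepsilon=\frac{1}{\varepsilon}\int_0^\varepsilon\ell(t)\,\ud t$; here the integral is finite because $\ell$, being non-increasing and slowly varying at zero, satisfies $\ell(t)=o(t^{-\delta})$ for any $\delta\in(0,1)$ (this also follows from \eqref{ass_1} applied to $\nu=\nu_1$). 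By Karamata's theorem for slowly varying functions \cite{Bingham}, $\int_0^\varepsilon\ell(t)\,\ud t\sim\varepsilon\ell(\varepsilon)$ as $\varepsilon\downarrow 0$, hence $C_\varepsilon\sim\ell(\varepsilon)$.

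Next I would compute $\lambda_\varepsilon(B_R^c)$ for $R\ge 1$. Using the scaling, $\lambda_\varepsilon(B_R^c)=C_\varepsilon^{-1}\int_{|x|>\varepsilon R}(1\wedge\varepsilon^{-1}|x|)\,\nu(\ud x)$, and on the domain of integration $\varepsilon^{-1}|x|>R\ge 1$, so $\lambda_\varepsilon(B_R^c)=\ell(\varepsilon R)/C_\varepsilon=\bigl(\ell(\varepsilon R)/\ell(\varepsilon)\bigr)\bigl(\ell(\varepsilon)/C_\varepsilon\bigr)$, which tends to $1$ by the slow variation of $\ell$ and the previous step. For $0<R<1$ the inclusion $B_R^c\supset B_1^c$ gives $1\ge\lambda_\varepsilon(B_R^c)\ge\lambda_\varepsilon(B_1^c)\to 1$, so the limit is $1$ there as well. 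This verifies \eqref{Lambda_cond_infty}, and Theorem~\ref{Thm:Per-Approx_Lebesgue} then yields $\lim_{\varepsilon\downarrow 0}C_\varepsilon^{-1}\Per_{\nu_\varepsilon}(E)=|E|$ for every $E$ of finite Lebesgue measure and of finite perimeter.

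The manipulations with the scaling relation are entirely routine; the one substantive point is the asymptotic equivalence $C_\varepsilon\sim\ell(\varepsilon)$, i.e.\ the application of Karamata's theorem, which is precisely where the slow-variation hypothesis on $\ell$ enters. It is also worth remarking that the same computation shows $\lambda_\varepsilon\xrightarrow{w}\delta_\infty$, which is the intuitive reason why the rescaled perimeters converge to the Lebesgue measure rather than to the classical perimeter.
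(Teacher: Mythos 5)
Your proposal is correct and follows essentially the same route as the paper's proof: both apply Theorem~\ref{Thm:Per-Approx_Lebesgue} with $R_\varepsilon\equiv 1$, reduce the verification of \eqref{Lambda_cond_infty} to the identity $C_\varepsilon=\frac{1}{\varepsilon}\int_0^\varepsilon\ell(t)\,\ud t$ (which the paper writes as $\int_0^1\ell(\varepsilon u)\,\ud u$), and then invoke Karamata's theorem \cite[Proposition 1.5.10]{Bingham} to get $C_\varepsilon\sim\ell(\varepsilon)$. Your explicit handling of $0<R<1$ via monotonicity is a small touch the paper leaves implicit, but the argument is otherwise the same.
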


\begin{proof}
We apply Theorem \ref{Thm:Per-Approx_Lebesgue} with $R_\varepsilon =1$. It is enough to show condition \eqref{Lambda_cond_infty}. For any $R\geq 1$ we have
\begin{align*}
\lambda_\varepsilon (B_R)
 &= \frac{\int_{|x|<R}(1\wedge |x|)\, \nu_\varepsilon (\ud x)}{\int_{\RR^d}(1\wedge |x|)\, \nu_\varepsilon (\ud x)} 
=
\frac{\int_{|x|<1} |x|\nu_\varepsilon (\ud x)+ \int_{1<|x|<R}\nu_\varepsilon (\ud x)}{\int_{|x|<1} |x|\nu_\varepsilon (\ud x)+ \int_{|x|>1} \nu_\varepsilon (\ud x)}.
\end{align*}
Further, 
\begin{align*}
\int_{|x|<1} |x|\nu_\varepsilon (\ud x) 
&= 
\int_{|x|<1} \int_0^{|x|}\ud u\, \nu_\varepsilon (\ud x) = \int_0^1 \int_{u<|x|<1}\nu_\varepsilon (\ud x)\, \ud u\\
&=
\int_0^1 \int_{\varepsilon u<|x|<\varepsilon}\nu (\ud x)\, \ud u
=
\int_0^1 (\ell (\varepsilon u)-\ell(\varepsilon))\, \ud u .
\end{align*}
This implies 
\begin{align}\label{slow_var_exp}
\lambda_\varepsilon (B_R)
 &=
 \frac{\int_0^1 \ell (\varepsilon u)\, \ud u -\ell (\varepsilon R)}{\int_0^1 \ell (\varepsilon u)\, \ud u}.
\end{align}
We set $L(w) = \ell (1/w)$, for $w>0$. By a change of variable we obtain 
$$
\int_0^1 \ell (\varepsilon u)\, \ud u = \frac{1}{\varepsilon}\int_{1/\varepsilon}^{\infty}w^{-2}L(w)\, \ud w.
$$
According to \cite[Proposition 1.5.10]{Bingham} we have
\begin{align*}
\frac{1}{\varepsilon}\int_{1/\varepsilon}^{\infty}w^{-2}L(w)\, \ud w
\sim L(1/\varepsilon),\quad \varepsilon \downarrow 0,
\end{align*}
which yields $\int_0^1 \ell (\varepsilon u)\, \ud u \sim \ell (\varepsilon)$ when $\varepsilon \downarrow 0$ and thus the  expression in \eqref{slow_var_exp} tends to zero as $\ell$ is slowly varying.
\end{proof}

We finish this paragraph with two results for $J$-perimeters.
\begin{corollary}\label{cor:slow_var}
Let $J\colon \RR^d \to (0,\infty)$ be a given kernel and let $J_\varepsilon (x)=\varepsilon^dJ(\varepsilon x)$, for $\varepsilon >0$. Assume that $(1\wedge |x|)J_\varepsilon (x)\in L^1(\RR^d)$ for all $\varepsilon >0$ and that $\ell (s) = \int_{|x|>s}J(x)\ud x$ is slowly varying at zero. Then, for any set $E\subset \RR^d$ of finite Lebesgue measure and of finite perimeter, it holds 
\begin{align*}
\lim_{\varepsilon \downarrow 0}\ell (\varepsilon )^{-1}\Per_{J\varepsilon}(E) = |E|.
\end{align*}
\end{corollary}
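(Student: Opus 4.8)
The plan is to derive the statement directly from Proposition~\ref{prop:rescale_Leb}. First I would put $\nu(\ud x)=J(x)\,\ud x$ and check that the dilated measure $\nu_\varepsilon(A)=\nu(\varepsilon A)$ has density $J_\varepsilon$: the change of variables $x=\varepsilon y$ gives $\nu_\varepsilon(A)=\int_{\varepsilon A}J(x)\,\ud x=\int_A J(\varepsilon y)\,\varepsilon^d\,\ud y=\int_A J_\varepsilon(y)\,\ud y$, so that $\Per_{\nu_\varepsilon}(E)=\Per_{J_\varepsilon}(E)$ for every $E$ of finite Lebesgue measure. The hypothesis $(1\wedge|x|)J_\varepsilon\in L^1(\RR^d)$ is exactly condition \eqref{ass_1} for $\nu_\varepsilon$ (the requirement $\nu_\varepsilon(\{0\})=0$ being automatic since $\nu_\varepsilon$ is absolutely continuous), and the tail equals $\int_{|x|>s}\nu(\ud x)=\int_{|x|>s}J(x)\,\ud x=\ell(s)$, which is slowly varying at zero by assumption. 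Hence Proposition~\ref{prop:rescale_Leb} applies and yields $C_\varepsilon^{-1}\Per_{\nu_\varepsilon}(E)\to|E|$ as $\varepsilon\downarrow0$, where $C_\varepsilon=\int_{\RR^d}(1\wedge|x|)\,\nu_\varepsilon(\ud x)$.

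It then remains to identify the normalising constant, i.e.\ to show $C_\varepsilon\sim\ell(\varepsilon)$ as $\varepsilon\downarrow0$; this is precisely what is extracted inside the proof of Proposition~\ref{prop:rescale_Leb}. Splitting $C_\varepsilon=\int_{|x|<1}|x|\,\nu_\varepsilon(\ud x)+\int_{|x|\geq1}\nu_\varepsilon(\ud x)$, the second term equals $\nu_\varepsilon(B_1^c)=\nu(B_\varepsilon^c)=\ell(\varepsilon)$, while the layer-cake computation from that proof gives $\int_{|x|<1}|x|\,\nu_\varepsilon(\ud x)=\int_0^1(\ell(\varepsilon u)-\ell(\varepsilon))\,\ud u$; therefore $C_\varepsilon=\int_0^1\ell(\varepsilon u)\,\ud u$. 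Writing $L(w)=\ell(1/w)$ and invoking \cite[Proposition~1.5.10]{Bingham} one obtains $\int_0^1\ell(\varepsilon u)\,\ud u=\varepsilon^{-1}\int_{1/\varepsilon}^\infty w^{-2}L(w)\,\ud w\sim L(1/\varepsilon)=\ell(\varepsilon)$ as $\varepsilon\downarrow0$, using that $\ell$ is slowly varying.

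Combining the two steps, $\ell(\varepsilon)^{-1}\Per_{J_\varepsilon}(E)=\big(C_\varepsilon/\ell(\varepsilon)\big)\cdot C_\varepsilon^{-1}\Per_{\nu_\varepsilon}(E)\longrightarrow 1\cdot|E|=|E|$, which is the claim. I do not expect a genuine obstacle here: the only slightly delicate point is the asymptotic equivalence $C_\varepsilon\sim\ell(\varepsilon)$, but it follows from Karamata's theory of slowly varying functions and is already carried out verbatim within the proof of Proposition~\ref{prop:rescale_Leb}; the rest is bookkeeping around the scaling $x\mapsto\varepsilon x$.
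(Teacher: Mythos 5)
Your proposal is correct and follows exactly the same route as the paper: choose $\nu(\ud x)=J(x)\,\ud x$, so that $\nu_\varepsilon(A)=\nu(\varepsilon A)$ has density $J_\varepsilon$, apply Proposition~\ref{prop:rescale_Leb}, and identify the normalising constant via $C_\varepsilon=\int_0^1\ell(\varepsilon u)\,\ud u\sim\ell(\varepsilon)$, the last step being the Karamata computation already performed inside the proof of Proposition~\ref{prop:rescale_Leb}. The paper's proof is a one-liner that simply states $C_\varepsilon=\ell(\varepsilon)(1+o(1))$; you have filled in the same bookkeeping that the authors left implicit, with no divergence in method.
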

\begin{proof}
This follows from Proposition \ref{prop:rescale_Leb} if we choose $\nu_\varepsilon (\ud x) = J_\varepsilon (x)\ud x$ and notice that in this case $C_\varepsilon = \int (1\wedge |x|)J_\varepsilon (x)\, \ud x = \ell (\varepsilon )(1+o(1))$.
\end{proof}

The following easy observation is a consequence of Theorem \ref{Thm:Per-Approx_Lebesgue} if we choose $\nu_{\varepsilon}(\ud x) = J_{\varepsilon}(x)\ud x$ and observe that $\nu_{\varepsilon}(\RR^d)= \Vert J\Vert_{L^1}$. We remark, however, that it can be proved directly if we use the definition of the $J$-perimeter together with the dominated convergence theorem.
\begin{corollary}
Let $J\colon \RR^d \to (0,\infty)$ be a kernel such that $J\in L^1(\RR^d)$ and let $J_\varepsilon (x) = \varepsilon^d J(\varepsilon x)$, for $\varepsilon >0$. For any set $E\subset \RR^d$ of finite Lebesgue measure it holds 
\begin{align*}
\lim_{\varepsilon \downarrow 0}\Per_{J_\varepsilon}(E) = \Vert J\Vert_{L^1}|E|.
\end{align*}
\end{corollary}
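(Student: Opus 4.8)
The plan is to use the covariance-function representation of the non-local perimeter, exactly as in the proof of Theorem~\ref{Thm:Per-Approx_Lebesgue}, and then apply the dominated convergence theorem after rescaling the integration variable. First I would record that, since $|E|<\infty$, the covariance function $g_E$ is bounded with $g_E(0)=|E|$, belongs to $C_0(\RR^d)$, and satisfies $|E\cap(E^c-y)|=g_E(0)-g_E(y)$. Writing $\nu_\varepsilon(\ud x)=J_\varepsilon(x)\,\ud x$, we have $\Per_{J_\varepsilon}(E)=\Per_{\nu_\varepsilon}(E)=\int_{\RR^d}\bigl(g_E(0)-g_E(y)\bigr)J_\varepsilon(y)\,\ud y$, and in particular $\Per_{J_\varepsilon}(E)\le |E|\,\Vert J_\varepsilon\Vert_{L^1}=|E|\,\Vert J\Vert_{L^1}<\infty$, so everything in sight is finite.

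Next I would perform the change of variables $y=z/\varepsilon$, using $J_\varepsilon(z/\varepsilon)=\varepsilon^d J(z)$ and $\ud y=\varepsilon^{-d}\,\ud z$, which turns the integral into $\int_{\RR^d}\bigl(|E|-g_E(z/\varepsilon)\bigr)J(z)\,\ud z$. The pointwise analysis is then immediate: for every $z\neq 0$ we have $|z/\varepsilon|\to\infty$ as $\varepsilon\downarrow 0$, hence $g_E(z/\varepsilon)\to 0$ because $g_E\in C_0(\RR^d)$, so the integrand converges for a.e.\ $z$ to $|E|\,J(z)$. Moreover the integrand is dominated, uniformly in $\varepsilon$, by the integrable function $|E|\,J(z)$, since $0\le |E|-g_E(z/\varepsilon)\le |E|$ and $J\in L^1(\RR^d)$.

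The dominated convergence theorem then gives $\lim_{\varepsilon\downarrow0}\Per_{J_\varepsilon}(E)=|E|\int_{\RR^d}J(z)\,\ud z=\Vert J\Vert_{L^1}\,|E|$, as claimed. I do not expect any real obstacle; the only point worth stressing is that the argument uses only boundedness of $g_E$ and $g_E\in C_0(\RR^d)$, both of which follow from $|E|<\infty$ alone, so no finite-perimeter hypothesis on $E$ is needed, which is exactly why the statement assumes merely $|E|<\infty$. As an alternative, one could instead invoke the \emph{furthermore} part of Theorem~\ref{Thm:Per-Approx_Lebesgue} with $R_\varepsilon=\infty$, after checking that $\nu_\varepsilon(\RR^d)=\Vert J\Vert_{L^1}$ and that $\lambda_\varepsilon(B_R^c)=\Vert J\Vert_{L^1}^{-1}\int_{|z|>\varepsilon R}J(z)\,\ud z\to 1$ as $\varepsilon\downarrow0$; but the direct route above is shorter and self-contained.
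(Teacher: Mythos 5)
Your proof is correct. The paper itself justifies this corollary by invoking the ``furthermore'' part of Theorem~\ref{Thm:Per-Approx_Lebesgue} with $\nu_\varepsilon(\ud x)=J_\varepsilon(x)\,\ud x$ and $\nu_\varepsilon(\RR^d)=\Vert J\Vert_{L^1}$ (your second route), and then remarks that a direct proof via the definition of the $J$-perimeter and dominated convergence is also possible; your primary argument is precisely that remark spelled out. The covariance representation, the change of variables $z=\varepsilon y$, the pointwise decay $g_E(z/\varepsilon)\to 0$ from $g_E\in C_0(\RR^d)$, and the domination by $|E|\,J(z)$ are all in order, and your observation that only $|E|<\infty$ (no finite-perimeter hypothesis) is needed matches the statement.
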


\subsection*{Convergence of anisotropic fractional perimeters}
We finally present how to deduce convergence of anisotropic fractional $\alpha$-perimeters when $\alpha \downarrow 0$. We actually slightly improve on \cite[Theorem 6]{Ludwig_Per} as we do not require the set $E$ to be bounded. 
\begin{proposition}\label{result_Ludwig_at 0}
For any $E$ of finite Lebesgue measure and of finite perimeter it holds
\begin{align}\label{Ludwig at zero_new}
\lim_{\alpha \downarrow 0}\alpha \Per_{\alpha}(E,K) = d|K||E|.
\end{align}
\end{proposition}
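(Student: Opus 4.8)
The plan is to derive \eqref{Ludwig at zero_new} from Theorem~\ref{Thm:Per-Approx_Lebesgue} applied to the family of measures $\nu_\alpha(\cdot,K)$ introduced in the proof of the previous proposition, for which $\Per_\alpha(E,K)=\Per_{\nu_\alpha(\cdot,K)}(E)$ (see \eqref{Our Per is MLudiwig}). In polar coordinates $\nu_\alpha(\ud x,K)$ corresponds to $r^{-\alpha-1}\ud r\,\Vert\theta\Vert_K^{-d-\alpha}\ud\theta$, and this measure satisfies \eqref{ass_1} for every $\alpha\in(0,1)$; since $E$ has finite Lebesgue measure and finite perimeter, Theorem~\ref{Thm:Per-Approx_Lebesgue} is applicable with the identification $\varepsilon\leftrightarrow\alpha$ and the choice $R_\varepsilon=1$, provided we check condition \eqref{Lambda_cond_infty}.

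First I would record the normalising constant. Writing $C_\alpha(K)=\int_{\mathbb{S}^{d-1}}\Vert\theta\Vert_K^{-d-\alpha}\ud\theta$ (the constant already appearing in the $\alpha\uparrow 1$ proof), a direct computation gives
\[
C_\alpha:=\int_{\RR^d}(1\wedge|x|)\,\nu_\alpha(\ud x,K)=C_\alpha(K)\Bigl(\tfrac1\alpha+\tfrac1{1-\alpha}\Bigr).
\]
Moreover, for $R\ge 1$ one has $\int_{B_R^c}(1\wedge|x|)\,\nu_\alpha(\ud x,K)=\int_{B_R^c}\nu_\alpha(\ud x,K)=\alpha^{-1}R^{-\alpha}C_\alpha(K)$, so that
\[
\lambda_\alpha(B_R^c)=\frac{\alpha^{-1}R^{-\alpha}C_\alpha(K)}{C_\alpha(K)\bigl(\alpha^{-1}+(1-\alpha)^{-1}\bigr)}=R^{-\alpha}(1-\alpha)\xrightarrow[\alpha\downarrow0]{}1,
\]
and an analogous (slightly easier) computation handles $0<R<1$; this is exactly condition \eqref{Lambda_cond_infty}. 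Theorem~\ref{Thm:Per-Approx_Lebesgue} then yields $\lim_{\alpha\downarrow0}C_\alpha^{-1}\Per_\alpha(E,K)=|E|$.

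It remains to evaluate $\lim_{\alpha\downarrow0}\alpha C_\alpha$. Since $K$ is origin-symmetric with non-empty interior, $0\in\operatorname{int}K$, so $\Vert\theta\Vert_K$ is continuous and bounded away from $0$ on the compact sphere; dominated convergence therefore gives $C_\alpha(K)\to\int_{\mathbb{S}^{d-1}}\Vert\theta\Vert_K^{-d}\ud\theta$, and the polar-coordinate identity $|K|=\int_{\mathbb{S}^{d-1}}\int_0^{1/\Vert\theta\Vert_K}r^{d-1}\ud r\,\ud\theta=\tfrac1d\int_{\mathbb{S}^{d-1}}\Vert\theta\Vert_K^{-d}\ud\theta$ (the same computation as in \eqref{polar_outer_normal}) identifies this limit as $d|K|$. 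Hence $\alpha C_\alpha=C_\alpha(K)\bigl(1+\tfrac{\alpha}{1-\alpha}\bigr)\to d|K|$, and consequently
\[
\alpha\Per_\alpha(E,K)=(\alpha C_\alpha)\cdot\bigl(C_\alpha^{-1}\Per_\alpha(E,K)\bigr)\xrightarrow[\alpha\downarrow0]{}d|K|\,|E|,
\]
which is \eqref{Ludwig at zero_new}.

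The argument is essentially bookkeeping, and I do not expect a genuine obstacle; the only delicate point is keeping the two factors of the normalisation straight — the radial factor $\tfrac1\alpha+\tfrac1{1-\alpha}$ and the spherical factor $C_\alpha(K)$ — when verifying \eqref{Lambda_cond_infty} and when extracting the constant $d|K|$. As an alternative to invoking Theorem~\ref{Thm:Per-Approx_Lebesgue} one could mimic the proof of Proposition~\ref{Prop:stable} directly via the covariance function, expanding $\Per_\alpha(E,K)$ as in \eqref{Per_alpha_decom}: the ``mass at infinity'' term $|E|\,\nu_\alpha(B_R^c,K)=\alpha^{-1}R^{-\alpha}C_\alpha(K)|E|$ produces the leading order (its $\alpha$-multiple tends to $d|K||E|$), the tail term involving $g_E$ is controlled uniformly in $\alpha$ using $g_E\in C_0(\RR^d)$, and the near-origin term is $o(1/\alpha)$ because $g_E$ is Lipschitz; this route bypasses Theorem~\ref{Thm:Per-Approx_Lebesgue} but needs the identical computation of $C_\alpha(K)$ and $|K|$.
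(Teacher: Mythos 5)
Your proof is correct, and the primary route is genuinely different from the paper's. The paper defines $\nu_\alpha(\cdot,K)$ \emph{with} the $\alpha$ prefactor (so $\Per_{\nu_\alpha(\cdot,K)}(E)=\alpha\Per_\alpha(E,K)$) and then re-runs the covariance-function decomposition \eqref{Per_alpha_decom} directly, exactly as in Proposition~\ref{Prop:stable}: the $B_R^c$ tail is small because $g_E\in C_0(\RR^d)$, the $B_R$ piece is $O(1)$ by Lipschitz continuity of $g_E$ and dominated convergence, and the constant comes from $\lim_{\alpha\downarrow0}C_\alpha(K)=d|K|$. You instead keep the un-normalised $\nu_\alpha(\cdot,K)$ of \eqref{Our Per is MLudiwig}, invoke Theorem~\ref{Thm:Per-Approx_Lebesgue} as a black box after checking \eqref{Lambda_cond_infty} (for which the monotonicity of $R\mapsto\lambda_\alpha(B_R^c)$ lets the $R\ge1$ computation suffice), and then separately compute $\alpha C_\alpha\to d|K|$. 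Your approach is more modular and avoids repeating the decomposition a third time, at the cost of having to track the product structure $C_\alpha=C_\alpha(K)\bigl(\tfrac1\alpha+\tfrac1{1-\alpha}\bigr)$ carefully; the paper's direct approach is slightly more self-contained and mirrors Proposition~\ref{Prop:stable}, where the direct decomposition permitted the weaker hypothesis $\Per_{\nu_{\alpha_0}}(E)<\infty$ (though here the Lipschitz estimate needs $\Per(E)<\infty$ anyway, so nothing is actually gained). Your ``alternative'' sketch is essentially the paper's proof.
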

\begin{proof}
We consider the following  measure
\begin{align}\label{stable_Ludiwg}
\nu_\alpha (A, K) =\alpha \int_{\mathbb{S}^{d-1}}\int_{0}^\infty \mathbf{1}_A(\varrho \theta) \varrho^{-1-\alpha}\mathrm{d}\varrho \, \frac{\mathrm{d}\theta}{\Vert \theta \Vert_K^{d+\alpha}},\quad \alpha \in (0,1).
\end{align}
%where  . 
We observe that
\begin{align}\label{Per-rel}
\Per _{\nu_{\alpha}(\cdot ,K)}(E) = \alpha \Per_\alpha (E,K).
\end{align}
Proceeding similarly as in \eqref{Per_alpha_decom} we find that
\begin{align*}
\Per_{\nu_\alpha (\cdot ,K)}(E) = C_\alpha (K)R^{-\alpha}|E| - \int_{B_R^c}g_E(y) \nu_{\alpha}(\mathrm{d}y, K)
+
\int_{B_R} (g_E(0)-g_E(y))
\nu_{\alpha}(\mathrm{d}y, K), 
\end{align*}
where $C_\alpha (K) = \int_{\mathbb{S}^{d-1}}\frac{\ud \theta}{\Vert \theta \Vert_K^{d+\alpha}}$. The integral over $B^c_R$ may be done arbitrarily small if we choose $R$ big enough.  
Since $\Per (E)<\infty$, we know that $g_E(x)$ is a  Lipschitz function. This and the fact that 
 $\Vert \theta \Vert ^{-d-\alpha }_K$ is bounded for $\theta \in \mathbb{S}^{d-1}$ allows us to apply the dominated convergence theorem and we obtain
\begin{align*}
\lim_{\alpha \downarrow 0}\alpha ^{-1}\int_{B_R} (g_E(0)-g_E(y))
\nu_{\alpha}(\mathrm{d}y, K) 
= 
\int_{\mathbb{S}^{d-1}}\int_0^R (g_E(0)-g_E(r\theta))r^{-1}\, \ud r\, \frac{\ud \theta}{\Vert \theta \Vert_K^{d}}.
\end{align*}
Further, we have
\begin{align*}
\lim_{\alpha \downarrow 0}C_\alpha (K) = \int_{\mathbb{S}^{d-1}}\frac{\ud \theta}{\Vert \theta \Vert_K^d}= d|K|
\end{align*}
and we infer that 
\begin{align*}
\lim_{\alpha \downarrow 0}\Per_{\nu_{\alpha}(\cdot ,K)}(E) = d|K||E|,
\end{align*}
which in view of \eqref{Per-rel} implies \eqref{Ludwig at zero_new}.
%The corresponding measures $\lambda_\alpha (\cdot ,K)$ are given by
%\begin{align*}
%\lambda_\alpha (A, K) = C_\alpha^{-1} C_\alpha (K)^{-1}\int_A (1\wedge |x|)\nu_\alpha (\ud x,K),
%\end{align*}
%where 
% $C_\alpha =\int_0^\infty (1\wedge r)r^{-1-\alpha }\ud r$
% and
%. 
%satisfy $\lim_{\alpha \downarrow 0}\lambda_\alpha (B_R^c) = 1$, for any $R>0$. 
\end{proof}

\section{Appendix: Proof of Theorem \ref{Thm:Ponce}}\label{sec:Appendix}
We aim to establish a more general version of \cite[Theorem 2]{Ponce} which is included in Theorem \ref{Thm:Ponce}. We follow closely the approach of \cite{Ponce}.  In the remaining part of the text we make use of mollifiers. Let $j\in L^1(\RR^d)$ be such that $\int_{\RR^d}j(x)\ud x =1$. For any $\delta >0$ let $j_\delta (x) =\delta^{-d}j(x/\delta)$. Clearly, $\int_{\RR^d}j_\delta (x)\ud x=1$. For any $f\in L^1(\RR^d)$, we set
\begin{align*}
f_\delta (x) = j_{\delta }\ast f(x),\quad x\in \RR^d.
\end{align*}
We start with the following series of lemmas.

\begin{lemma}\label{Lemma_Ponce_1}
For any probability measure $\lambda$ on $\RR^d$ and any $f\in \mathrm{BV} (\RR^d)$ it holds
\begin{align*}
\int_{\RR^d}\int_{\RR^d} \frac{|f(x+y) - f(x)|}{|y|}\lambda (\ud y)\, \ud x \leq 
\int_{\RR^d}\int_{\RR^d} 
\Big\vert Df \cdot \frac{y}{|y|}\Big\vert \, \lambda (\ud y) .
\end{align*}
\end{lemma}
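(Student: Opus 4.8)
The plan is to use Tonelli's theorem to reduce the double integral to a pointwise-in-$y$ estimate for the $L^1$ modulus of continuity of $f$. Since the integrand is non-negative and jointly measurable, Tonelli gives
\begin{align*}
\int_{\RR^d}\int_{\RR^d}\frac{|f(x+y)-f(x)|}{|y|}\,\lambda(\ud y)\,\ud x=\int_{\RR^d}\frac{g(y)}{|y|}\,\lambda(\ud y),\qquad g(y):=\int_{\RR^d}|f(x+y)-f(x)|\,\ud x,
\end{align*}
where the value at $y=0$ plays no role since $g(0)=0$. It therefore suffices to show that
\begin{align*}
g(y)\le |y|\int_{\RR^d}\Big|Df\cdot\frac{y}{|y|}\Big|\qquad\text{for every }y\neq 0,
\end{align*}
after which the claim follows by dividing by $|y|$ and integrating in $y$ against $\lambda$.

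To prove this pointwise estimate I would first treat $h\in C^1(\RR^d)$ with $h,\nabla h\in L^1(\RR^d)$: writing $e=y/|y|$, the identity $h(x+y)-h(x)=\int_0^1\nabla h(x+ty)\cdot y\,\ud t$ gives $|h(x+y)-h(x)|\le |y|\int_0^1|\nabla h(x+ty)\cdot e|\,\ud t$, and integrating in $x$ while using translation invariance of Lebesgue measure yields $\int_{\RR^d}|h(x+y)-h(x)|\,\ud x\le |y|\int_{\RR^d}|\nabla h\cdot e|\,\ud x$. For general $f\in\mathrm{BV}(\RR^d)$ I would then mollify: fix a non-negative $\rho\in C_c^\infty(\RR^d)$ with $\int_{\RR^d}\rho=1$, put $\rho_\delta(x)=\delta^{-d}\rho(x/\delta)$ and $f_\delta=\rho_\delta\ast f$, so that $f_\delta\in C^\infty\cap L^1$ and, componentwise, $\partial_i f_\delta=\rho_\delta\ast(D_i f)$; hence $\nabla f_\delta\cdot e=\rho_\delta\ast(Df\cdot e)$ as an $L^1$ function and, using $\rho_\delta\ge 0$ and Tonelli,
\begin{align*}
\int_{\RR^d}|\nabla f_\delta\cdot e|\,\ud x\le\int_{\RR^d}\int_{\RR^d}\rho_\delta(x-z)\,\ud|Df\cdot e|(z)\,\ud x=|Df\cdot e|(\RR^d)=\int_{\RR^d}|Df\cdot e|.
\end{align*}
Applying the smooth estimate to $h=f_\delta$ gives $g_\delta(y)\le|y|\int_{\RR^d}|Df\cdot e|$ with $g_\delta(y)=\int_{\RR^d}|f_\delta(x+y)-f_\delta(x)|\,\ud x$; since $f_\delta\to f$ in $L^1$ and translation is an $L^1$-isometry we have $g_\delta(y)\to g(y)$, and letting $\delta\downarrow 0$ (or using Fatou's lemma) produces the desired bound.

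The step I expect to be the crux is this mollification argument: one must justify $\partial_i f_\delta=\rho_\delta\ast(D_i f)$ — which is precisely the definition of the distributional gradient tested against $y\mapsto\rho_\delta(x-y)$, the measure $Df$ being finite because $f\in\mathrm{BV}(\RR^d)$ — and the bound $\norm{\rho_\delta\ast(Df\cdot e)}_{L^1}\le|Df\cdot e|(\RR^d)$ for the scalar signed measure $Df\cdot e=\sum_i e_i\,D_i f$. Everything else is routine: the convergence $g_\delta\to g$, the irrelevance of $y=0$, and the measurability of $y\mapsto\int_{\RR^d}|Df\cdot\frac{y}{|y|}|$ needed for the applications of Tonelli (which one obtains by writing it through the polar decomposition as the iterated integral of the jointly measurable map $(x,\theta)\mapsto|\frac{Df}{|Df|}(x)\cdot\theta|$, cf.\ \cite[Proposition~1.23]{Ambrosio_2000}).
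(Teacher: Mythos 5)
Your argument is correct and follows essentially the same route as the paper: mollify $f$, apply the fundamental theorem of calculus to the smooth approximant, and control $\int_{\RR^d}|\nabla f_\delta\cdot e|\,\ud x$ by $|Df\cdot e|(\RR^d)$ via the convolution identity $\nabla f_\delta=j_\delta\ast Df$, then pass to the limit in $\delta$. The only organizational difference is that you first isolate a pointwise-in-$y$ estimate on the $L^1$-modulus of continuity $g(y)$, whereas the paper keeps the $\lambda$-integral inside throughout and takes a truncation to $B_R$ (removing it at the end); both handle the limit in $\delta$ the same way (uniform convergence $g_\delta\to g$, or Fatou), and the key lemma — the $L^1$ bound on the directional derivative of the mollification in terms of the polar decomposition of $Df$ — is identical.
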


\begin{proof}
For any $R>0$ and $\delta >0$, by a standard argument basing upon the Fundamental Theorem of Calculus, we have
\begin{align*}
\int_{B_R}\int_{B_R} \frac{|f_\delta (x+y)-f_\delta (x)|}{|y|}&\lambda (\ud y)\, \ud x\\
&\leq 
\int_{B_R}\int_{B_R}  \int_0^1 \left\vert \nabla f_\delta (x+\rho y)\cdot \frac{y}{|y|}\right\vert \ud \rho \, \lambda (\ud y)\, \ud x \\
&\leq 
\int_{\RR^d}\int_0^1\int_{\RR^d} \left\vert \nabla f_\delta (x+\rho y)\cdot \frac{y}{|y|}\right\vert \ud x \, \ud \rho \, \lambda (\ud y)\\
&=
\int_{\RR^d} \int_{\RR^d} 
 \left\vert \nabla f_\delta (x)\cdot \frac{y}{|y|}\right\vert \ud x  \, \lambda (\ud y)\\
 &=
 \int_{\RR^d} \int_{\RR^d} \left\vert \int_{\RR^d} j_\delta (x-z)\,  Df(\ud z)\cdot \frac{y}{|y|}\right\vert  \ud x  \, \lambda (\ud y)\\
 &=
  \int_{\RR^d} \int_{\RR^d} \left\vert \int_{\RR^d} j_\delta (x-z) \frac{Df}{|Df|}(z)\cdot \frac{y}{|y|}\, |Df|(\ud z)\right\vert  \ud x  \, \lambda (\ud y)\\
  &\leq
   \int_{\RR^d} \int_{\RR^d} \int_{\RR^d}
   j_\delta (x-z) \left\vert \frac{Df}{|Df|}(z)\cdot \frac{y}{|y|}\right\vert \ud x\,  |Df|(\ud z) \, \lambda (\ud y)\\
   &=
    \int_{\RR^d} \int_{\RR^d} 
    \left\vert \frac{Df}{|Df|}(z)\cdot \frac{y}{|y|}\right\vert  |Df|(\ud z)\, \lambda (\ud y)\\
    &=
    \int_{\RR^d} \int_{\RR^d} 
    \left\vert Df\cdot \frac{y}{|y|}\right\vert \lambda (\ud y),
\end{align*}
where we used \eqref{Radon-Nikodym} together with \cite[Proposition 1.23]{Ambrosio_2000}. We finally take $\lambda \downarrow 0$ and then $R\to \infty $ and the result follows.
\end{proof}

\begin{lemma}\label{Lemma_Ponce_2}
For any probability measure $\lambda$ on $\RR^d$ and any $f\in L^1(\RR^d)$ it holds
\begin{align*}
\int_{\RR^d}\int_{\RR^d} \frac{|f_\delta (x+y) - f_\delta (x)|}{|y|}\lambda (\ud y)\, \ud x 
\leq
\int_{\RR^d}\int_{\RR^d} \frac{|f(x+y) - f(x)|}{|y|}\lambda (\ud y)\, \ud x .
\end{align*}
\end{lemma}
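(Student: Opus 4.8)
The plan is to reduce the inequality to the elementary pointwise bound obtained by writing $f_\delta$ as an average of translates of $f$. Since the mollifier satisfies $j_\delta\ge 0$ and $\int_{\RR^d}j_\delta(z)\,\ud z=1$, for every $x,y\in\RR^d$ one has
\begin{align*}
f_\delta(x+y)-f_\delta(x)=\int_{\RR^d}j_\delta(z)\bigl(f(x+y-z)-f(x-z)\bigr)\,\ud z,
\end{align*}
so Jensen's inequality for the probability measure $j_\delta(z)\,\ud z$ gives $|f_\delta(x+y)-f_\delta(x)|\le\int_{\RR^d}j_\delta(z)\,|f(x+y-z)-f(x-z)|\,\ud z$.

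Next I would divide by $|y|$, integrate $\ud x$ over $\RR^d$ and $\lambda(\ud y)$ over $\RR^d$, and interchange the order of integration. As every integrand in sight is nonnegative and measurable, this is justified by Tonelli's theorem (with values in $[0,\infty]$), so no a priori integrability has to be checked and the degenerate case of an infinite right-hand side is automatically included. After the interchange the factor $j_\delta(z)$ multiplies an expression of the form $\int_{\RR^d}|f(x+y-z)-f(x-z)|\,\ud x$, which by translation invariance of the Lebesgue measure equals $\int_{\RR^d}|f(x+y)-f(x)|\,\ud x$ for every $z$. Pulling this out of the $z$-integral and using $\int_{\RR^d}j_\delta(z)\,\ud z=1$ collapses the $z$-variable and produces exactly the right-hand side of the claimed inequality.

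There is no genuine obstacle in this argument; the one point that deserves mention is that the mollifying kernel $j$ is taken to be nonnegative --- as is already implicitly used in the proof of Lemma \ref{Lemma_Ponce_1} --- since this is what makes $j_\delta(z)\,\ud z$ a probability measure and hence legitimizes the triangle/Jensen step. With a signed kernel the constant $\int_{\RR^d}|j_\delta|$ would intervene and the stated inequality need not hold. Everything else is a routine application of Tonelli's theorem together with a change of variables.
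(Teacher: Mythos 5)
Your proof is correct and takes essentially the same route as the paper: bound $|f_\delta(x+y)-f_\delta(x)|$ by $\int j_\delta(z)|f(x+y-z)-f(x-z)|\,\ud z$, interchange integrals by Tonelli, use translation invariance of Lebesgue measure, and integrate out $j_\delta$. Your explicit remark that the argument requires $j\ge 0$ (so that $j_\delta\,\ud z$ is a probability measure) is a fair point that the paper leaves implicit in its definition of the mollifier.
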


\begin{proof}
We clearly have
\begin{align*}
\int_{\RR^d}\int_{\RR^d} \frac{|f_\delta (x+y) - f_\delta (x)|}{|y|}&\lambda (\ud y)\, \ud x \\
&\leq
\int_{\RR^d}\int_{\RR^d} \int_{\RR^d}\frac{|f (x+y-z) - f (x-z)|}{|y|}\, j_\delta (z)\, \ud z\, \lambda (\ud y)\, \ud x \\
&=
\int_{\RR^d}\int_{\RR^d} \int_{\RR^d}\frac{|f (w+y) - f (w)|}{|y|}\, j_\delta (x+w)\,\ud x\,  \ud w\lambda (\ud y)\\
&=
\int_{\RR^d}\int_{\RR^d} \frac{|f(w+y) - f(w)|}{|y|}\,  \ud w\, \lambda (\ud y)
\end{align*}
and the proof is finished.
\end{proof}

Recall that $\{\lambda\}_{\varepsilon >0}$ is a family of probability measures on $\RR^d$ such that $\lambda_\varepsilon (\{0\})=0$ and, for any $R>0$,
\begin{align}\label{lambda_to_zero}
\lim_{\varepsilon \downarrow  0}\lambda_\varepsilon (B_R^c)=0.
\end{align}
%$\lambda_\varepsilon \xrightarrow{w} \delta_0$. 
We consider $\mu_{\varepsilon}(E)= \lambda_{\varepsilon}((0,\infty) E)$, where $(0,\infty) E = \{re:\, e\in E \ \text{and}\ r>0\}$ is a cone spanned by $E\subset \mathbb{S}^{d-1}$. Since the family $\{\mu_\varepsilon\}_{\varepsilon >0}$ is bounded in the space of Radon measures on $\mathbb{S}^{d-1}$, there exists a sequence $\varepsilon_j$ converging to zero and a probability measure $\mu $ on the unit sphere such that $\mu_{\varepsilon_j}\xrightarrow{w}\mu$. We observe that in view of the definition, for any continuous function $F\colon \mathbb{S}^{d-1}\to \RR$ we have
\begin{align}
\int_{\RR^d}F\Big( \frac{y}{|y|}\Big)\lambda_\varepsilon (\ud y) = \int_{\mathbb{S}^{d-1}}F(\theta)\mu_\varepsilon (\ud \theta),\quad \varepsilon >0.
\end{align}
It evidently follows that
\begin{align}\label{lambda_conv_sphere}
\lim_{j\to \infty}
\int_{\RR^d}F\Big( \frac{y}{|y|}\Big)\lambda_{\varepsilon_j} (\ud y) = \int_{\mathbb{S}^{d-1}}F(\theta)\mu (\ud \theta).
\end{align}

\begin{lemma}\label{Lemma_Ponce_3}
Let $B_R\subset \RR^d$ be an arbitrary Euclidean ball of radius $R>0$. Then, for any $f\in C^2(B_R)$, it holds
\begin{align*}
\lim_{j\to \infty}
\int_{B_R}\int_{B_R} \frac{|f(x+y) - f(x)|}{|y|}\lambda_{\varepsilon_j} (\ud y)\, \ud x
=
\int_{\mathbb{S}^{d-1}}\int_{B_R} |\nabla f(x) \cdot \theta |\, \ud x\, \mu(\ud \theta) . 
\end{align*} 
\end{lemma}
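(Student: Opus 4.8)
\medskip
\noindent\textbf{Proof idea.}\quad
The plan is to replace the difference quotient by its linear part, transfer the $y$-integration onto the unit sphere, and then let the weak convergence $\mu_{\varepsilon_j}\xrightarrow{w}\mu$ do the work; here $f$ is understood to be $C^2$ on a neighbourhood of $B_R+B_R$, which is the situation in the intended application (where $f$ is a mollified $\mathrm{BV}$ function, hence smooth on $\RR^d$). First I would use the Fundamental Theorem of Calculus, $f(x+y)-f(x)=\int_0^1\nabla f(x+ty)\cdot y\,\ud t$, to obtain
\begin{align*}
\left|\frac{|f(x+y)-f(x)|}{|y|}-\Bigl|\nabla f(x)\cdot\frac{y}{|y|}\Bigr|\right|\le\int_0^1\bigl|\nabla f(x+ty)-\nabla f(x)\bigr|\,\ud t\le\|D^2f\|_\infty\,|y|.
\end{align*}
Integrating this over $B_R\times B_R$ against $\lambda_{\varepsilon_j}(\ud y)\,\ud x$ shows that the integral in the statement differs from $I_j:=\int_{B_R}\!\int_{B_R}\bigl|\nabla f(x)\cdot\frac{y}{|y|}\bigr|\,\lambda_{\varepsilon_j}(\ud y)\,\ud x$ by at most $\|D^2f\|_\infty\,|B_R|\int_{B_R}|y|\,\lambda_{\varepsilon_j}(\ud y)$; splitting the last integral at an arbitrary radius $r\in(0,R)$ bounds it by $r+R\,\lambda_{\varepsilon_j}(B_r^c)$, which by \eqref{lambda_to_zero} tends to $r$ along $\varepsilon_j$, so upon letting $r\downarrow0$ this error is negligible.

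Next I would enlarge the inner domain in $I_j$ from $B_R$ to $\RR^d$: since $\bigl|\nabla f(x)\cdot\frac{y}{|y|}\bigr|\le\|\nabla f\|_\infty$ on $B_R$, the change costs at most $\|\nabla f\|_\infty\,|B_R|\,\lambda_{\varepsilon_j}(B_R^c)$, which again vanishes by \eqref{lambda_to_zero}. For each fixed $x\in B_R$ the map $\theta\mapsto|\nabla f(x)\cdot\theta|$ is continuous on $\mathbb{S}^{d-1}$, so \eqref{lambda_conv_sphere} (equivalently, the weak convergence $\mu_{\varepsilon_j}\xrightarrow{w}\mu$ together with the fact that $\mu_\varepsilon$ is the image of $\lambda_\varepsilon$ under $y\mapsto y/|y|$) gives
\begin{align*}
\int_{\RR^d}\Bigl|\nabla f(x)\cdot\frac{y}{|y|}\Bigr|\,\lambda_{\varepsilon_j}(\ud y)\longrightarrow\int_{\mathbb{S}^{d-1}}|\nabla f(x)\cdot\theta|\,\mu(\ud\theta)\qquad(j\to\infty).
\end{align*}
The left-hand side is bounded by the constant $\|\nabla f\|_\infty$ uniformly in $j$ and $x$, and $|B_R|<\infty$, so dominated convergence in the variable $x$ together with Tonelli's theorem yields $I_j\to\int_{\mathbb{S}^{d-1}}\int_{B_R}|\nabla f(x)\cdot\theta|\,\ud x\,\mu(\ud\theta)$, which is the asserted limit.

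The step I expect to carry the actual content is the twofold use of the concentration hypothesis \eqref{lambda_to_zero}: it is what makes the Taylor remainder disappear, via $\int_{B_R}|y|\,\lambda_{\varepsilon_j}(\ud y)\to0$, and it is also what permits the passage from $\int_{B_R}(\cdots)\,\lambda_{\varepsilon_j}$ to the full integral $\int_{\RR^d}(\cdots)\,\lambda_{\varepsilon_j}=\int_{\mathbb{S}^{d-1}}(\cdots)\,\mu_{\varepsilon_j}$; after these two reductions only the weak convergence of $\mu_{\varepsilon_j}$, already recorded in \eqref{lambda_conv_sphere}, is needed. A minor technicality is that the values $f(x+y)$ for $x,y\in B_R$ require $f$ to be defined a little beyond $B_R$; this is immaterial in the intended use, where the lemma is applied to mollifications that are smooth on all of $\RR^d$.
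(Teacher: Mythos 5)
Your proof is correct and follows essentially the same route as the paper's: a Taylor/FTC estimate reduces the difference quotient to $\bigl|\nabla f(x)\cdot y/|y|\bigr|$ up to an error of order $|y|$, the concentration hypothesis \eqref{lambda_to_zero} kills both that error term and the contribution from $B_R^c$, and the remaining expression is identified with the spherical integral via \eqref{lambda_conv_sphere} plus dominated convergence. The only cosmetic difference is how you show $\int_{B_R}|y|\,\lambda_{\varepsilon_j}(\ud y)\to0$ (splitting at a radius $r$ and then letting $r\downarrow 0$, rather than the paper's Fubini rewriting $\int_0^1\lambda_\varepsilon(\{|y|>t\})\,\ud t$), which is equally valid.
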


\begin{proof}
We start with the following easy inequality
\begin{align*}
\left\vert 
\frac{|f(x+y)-f(y)|}{|y|}- \big\vert \nabla f(x)\cdot \frac{y}{|y|}\big\vert
\right\vert
&\leq
\frac{\big\vert f(x+y)-f(x)-\nabla f(x)\cdot \frac{y}{|y|}\big\vert}{|y|}\\
&\leq C|y|,\quad C>0.
\end{align*}
This implies
\begin{multline*}
\int_{B_R} \int_{B_R} 
\left\vert 
\frac{|f(x+y)-f(y)|}{|y|}- \big\vert \nabla f(x)\cdot \frac{y}{|y|}\big\vert
\right\vert
\lambda_\varepsilon (\ud y)\, \ud x\\
\leq
|B_R|\left( C\int_{|y|\leq 1}|y|\lambda_\varepsilon (\ud y) +2\Vert \nabla f\Vert_\infty \int_{|y|>1}\lambda_\varepsilon (\ud y)\right).
\end{multline*}
By \eqref{lambda_to_zero}, the second integral converges to zero as $\varepsilon \downarrow 0$. For the second we write
\begin{align*}
\int_{|y|\leq 1}|y|\lambda_\varepsilon (\ud y) 
&=
\int_{|y|\leq 1} \int_0^{|y|} \ud t\, \lambda_\varepsilon (\ud y) = \int_0^1 \int_{t\leq |y|\leq 1}\lambda_\varepsilon (\ud y)\, \ud t\\
&\leq 
\int_0^1 \int_{ |y|>t}\lambda_\varepsilon (\ud y)\, \ud t\to 0,\quad \mathrm{as}\ \varepsilon \downarrow 0.
\end{align*}
Thus we are left to show that
\begin{align*}
\lim_{j\to \infty} 
\int_{B_R} \int_{B_R}
\big\vert \nabla f(x)\cdot \frac{y}{|y|}\big\vert
\lambda_{\varepsilon_j} (\ud y)\, \ud x
=
\int_{\mathbb{S}^{d-1}}\int_{B_R} |\nabla f(x) \cdot \theta |\, \ud x\, \mu(\ud \theta) . 
\end{align*}
We have
\begin{align*}
\int_{B_R} \int_{\RR^d}
\big\vert \nabla f(x)\cdot \frac{y}{|y|}\big\vert
\lambda_{\varepsilon_j} (\ud y)\, \ud x
&=
\int_{B_R} \int_{B_R}
\big\vert \nabla f(x)\cdot \frac{y}{|y|}\big\vert
\lambda_{\varepsilon_j} (\ud y)\, \ud x\\
&\quad \quad +
\int_{B_R} \int_{B_R^c}
\big\vert \nabla f(x)\cdot \frac{y}{|y|}\big\vert
\lambda_{\varepsilon_j} (\ud y)\, \ud x.
\end{align*}
As the last integral clearly tends to zero, for $\varepsilon \downarrow 0$, we infer the result with the aid of \eqref{lambda_conv_sphere} and the dominated convergence theorem.
\end{proof}

\begin{proof}[Proof of Theorem \ref{Thm:Ponce}]
By Lemma \ref{Lemma_Ponce_1} and Lemma \ref{Lemma_Ponce_2} we have
\begin{multline*}
\int_{B_{1/R}} \int_{B_{1/R}}
\frac{|f_\delta (x+y) - f_\delta (x)|}{|y|}\lambda_{\varepsilon_j} (\ud y)\, \ud x \\
\leq 
\int_{\RR^d} \int_{\RR^d} 
\frac{|f (x+y) - f (x)|}{|y|}\lambda_{\varepsilon_j} (\ud y)\, \ud x\\
\leq 
\int_{\RR^d}\int_{\RR^d} 
\Big\vert Df \cdot \frac{y}{|y|}\Big\vert \, \lambda_{\varepsilon_j} (\ud y).
\end{multline*}
We note that the following convergence holds uniformly for $\theta \in \mathbb{S}^{d-1}$
\begin{align*}
\lim_{\delta \downarrow 0}\int_{\RR^d}|\nabla f_\delta \cdot \theta| = \int_{\RR^d}|Df \cdot \theta|.
\end{align*}
Combining this with Lemma \ref{Lemma_Ponce_3} % and inner regularity of Radon measures ??? Check
 we obtain
\begin{align*}
\lim_{R\downarrow 0}\lim_{\delta \downarrow 0}\lim_{j\to \infty}
\int_{B_{1/R}} \int_{B_{1/R}}&
\frac{|f_\delta (x+y) - f_\delta (x)|}{|y|}\lambda_{\varepsilon_j} (\ud y)\, \ud x 
=
\int_{\mathbb{S}^{d-1}}\int_{\RR^d}|Df\cdot \theta|\mu (\ud \theta).
\end{align*}
Since the function $\mathbb{S}^{d-1}\ni \theta \mapsto \int_{\RR^d}|Df \cdot \theta|$ is  continuous, we can apply \eqref{lambda_conv_sphere}  and we arrive at
\begin{align*}
\lim_{j\to \infty} 
\int_{\RR^d}\int_{\RR^d} 
\Big\vert Df \cdot \frac{y}{|y|}\Big\vert \, \lambda_{\varepsilon_j} (\ud y)
=
\int_{\mathbb{S}^{d-1}}\int_{\RR^d}|Df\cdot \theta|\mu (\ud \theta),
\end{align*}
and the proof is finished.
\end{proof}

\section*{Acknowledgement}
We wish to thank R.\ L.\ Schilling (TU Dresden) for stimulating discussions and helpful comments.
%%%%%%%%%%%%%%%%%%%%%%%%%
%%%%%%%%%%%%%%%%%%%%%%%%%
%%%%%%%%%%%%%%%%%%%%%%%%%

\bibliographystyle{abbrv}
\bibliography{non-local_per_Final}

\end{document}